\documentclass[a4paper]{amsart}

\usepackage{amsmath, amssymb, amsfonts, amsthm}
\usepackage{fullpage}
\usepackage{hyperref}
\usepackage[T1]{fontenc}
\usepackage{array}
\usepackage{booktabs}
\usepackage{tabularx}
\usepackage{comment}

\newcommand{\mf}{\mathfrak}
\newcommand{\mc}{\mathcal}
\newcommand{\mb}{\mathbb}
\newcommand{\NN}{\mb N}
\newcommand{\RR}{\mb R}
\newcommand{\ZZ}{\mb Z}
\newcommand{\kk}{\Bbbk}

\newcommand{\beq}{\begin{equation}}
\newcommand{\eeq}{\end{equation}}

\newcommand{\DMO}{\DeclareMathOperator}
\DMO{\ad}{ad}
\DMO{\height}{ht}
\DMO{\weight}{wt}
\DMO{\GK}{GKdim}
\DMO{\gr}{gr}
\DMO{\md}{md}
\DMO{\ann}{Ann}
\DMO{\Aut}{Aut}
\DMO{\hilb}{hilb}
\DMO{\len}{len}
\DMO{\tr}{tr}
\DMO{\spann}{span}
\DMO{\Res}{Res}
\DMO{\Stab}{Stab}

\newcommand{\CC}{\mb C}
\newcommand{\del}{\partial}

\newcommand{\V}{V\!ir}

\newcommand{\hra}{\hookrightarrow}

\theoremstyle{plain}
\numberwithin{equation}{section}
\newtheorem{theorem}[equation]{Theorem}
\newtheorem{proposition}[equation]{Proposition}
\newtheorem{corollary}[equation]{Corollary}
\newtheorem{lemma}[equation]{Lemma}
\newtheorem{sublemma}[equation]{Sublemma}

\theoremstyle{definition}

\newtheorem{remark}[equation]{Remark}
\newtheorem{scholium}[equation]{Scholium}

\title{Ideals in enveloping  algebras of affine Kac-Moody algebras}
\author{Rekha Biswal, Susan J. Sierra}
\address{(Biswal) School of Mathematics,
University of Edinburgh, Edinburgh EH9 3JZ, UK.}
\email{rbiswal@ed.ac.uk}
\address{(Sierra) School of Mathematics,
University of Edinburgh, Edinburgh EH9 3JZ, UK.}
\email{s.sierra@ed.ac.uk}

\keywords{Kac-Moody algebra, affine algebra, highest weight representation,  Gelfand-Kirillov dimension, simple ring}
\subjclass[2020]{Primary:  16S30, 17B67, 16P90, 17B10; Secondary 17B65, 16D30}

\begin{document}

\begin{abstract}
Let $L$ be an affine Kac-Moody algebra, with central element $c$, and let $\lambda \in \CC$.
We study two-sided ideals in the central quotient $U_\lambda(L):= U(L)/(c-\lambda)$ of the universal enveloping algebra of $L$, and prove:

\noindent{\bf Theorem 1.}  If $\lambda \neq 0$ then $U_\lambda(L)$ is simple.

\noindent{\bf Theorem 2.} The algebra $U_0(L)$ has  {\em just-infinite growth}, in the sense that any proper quotient has polynomial growth.


 As an immediate corollary, we show that the annihilator of any nontrivial  integrable highest weight representation of $L$ is centrally generated, extending a result of Chari for Verma modules.

We also show that universal enveloping algebras of loop algebras and current algebras of finite-dimensional simple Lie algebras have just-infinite growth, and 
prove  similar  results to Theorems 1 and 2 for  quotients of  symmetric algebras of these Lie algebras by Poisson ideals.

\end{abstract}

\maketitle


\section{Introduction}\label{INTRO}

Fix an algebraically closed field $\kk$ of characteristic 0.
Let $L$ be an affine Kac-Moody algebra over $\kk$.  
This paper is concerned with the structure, and particularly the {\em size}, of two-sided ideals of the universal enveloping algebra $U(L)$:  our main theorem shows that such ideals are extremely large, in a sense that we make precise below.  

In the introduction, to simplify discussion, we assume that $L$ is untwisted:  that is, that there is a finite-dimensional simple Lie algebra $\mf g$ so that, as  a vector space, 
\[ L= \mf g[t,t^{-1}]\oplus \kk c \oplus\kk d,\]
where $c$ is central, $d$ is the derivation measuring degree, and $\mf g[t, t^{-1}]$ is the loop algebra of $\mf g$.
(We consider general affine algebras in the body of the paper.)
The derived subalgebra $L'$ of $L$ is 
\[ L'= \mf g[t,t^{-1}]\oplus \kk c \]
and is the unique (up to isomorphism) nontrivial central extension of the loop algebra $\mf g[t, t^{-1}]$.
To emphasise the relationship between $L$ and $\mf g$, we sometimes write $L = \widehat{\mf g}$.

If $\lambda \in \kk$, define $U_\lambda(L) = U(L)/(c-\lambda)$ and $U_\lambda(L') = U(L')/(c-\lambda)$, so $U(\mf g[t,t^{-1}]) \cong U_0(L')$. 
We study two-sided ideals in $U(L)$ and in the central quotients $U_\lambda(L)$.   
We will see that they are very big, in a sense we make precise below.
Our main result is:
\begin{theorem}\label{ithm:U}
Let $\lambda \in \kk$.
\begin{itemize}
\item[(0)]  The algebras $U_\lambda(L)$ and $U_\lambda(L')$ have just-infinite growth.  
That is, let $J$ be a nonzero ideal of $U_\lambda(L)$, and let $J'$ be a nonzero ideal of $U_\lambda(L')$.  
Then $U_\lambda(L)/J$ and $U_\lambda (L')/J'$ have polynomial growth.
\item[(1)] In fact, if $\lambda \neq 0$ then $U_\lambda(L)$ and $U_\lambda(L')$ are simple rings.
\item[(2)] Any nonzero ideal of $U(L)$ or of $U(L')$ contains a nonzero element of $\kk[c]$; equivalently, the algebras $U(L')\otimes_{\kk[c]}\kk(c)$ and $U(L)\otimes_{\kk[c]}\kk(c)$ are simple.
\end{itemize}
\end{theorem}

Here recall that a $\kk$-algebra $R$ has {\em polynomial growth} if there is a polynomial $p(t) \in \RR[t]$ so that, for every finite-dimensional subspace $V$ of $R$, $\dim V^n \leq p(n)$ for all sufficiently large $n$. 
(For example, enveloping algebras of finite-dimensional Lie algebras have polynomial growth.)
Further, $R$ has {\em exponential growth} if there exists $V \subset R$ so that $\lim_{n\to \infty} (\dim V^n)^{1/n} > 1$ and (strictly) {\em subexponential growth} if $R$ has neither exponential nor polynomial growth.
It is well-known that $U_\lambda(L)$ and $U_\lambda(L')$ have subexponential growth.  
Thus Theorem~\ref{ithm:U}(0) tells us that two-sided ideals in these algebras are extremely large:  large enough to cut these very big algebras down to a reasonable size.

We also prove a similar theorem  for Poisson ideals in the symmetric algebras of $L$ and $L'$. 
Recall that the symmetric algebra $S(\mf k)$ of a Lie algebra $\mf k$ is a Poisson algebra under the Kostant-Kirillov Poisson bracket $\{ - , -\}$ induced by defining $\{ x,y\} = [x,y]$ for $x,y \in \mf k$.
A {\em Poisson ideal} of a Poisson algebra $R$ is an ideal of $R$ which is also a Lie ideal for the Poisson bracket of $R$.
We prove:
\begin{theorem}\label{ithm:S}
Let $\lambda \in \kk$ and  consider the Poisson algebras $S_\lambda(L) = S(L)/(c-\lambda)$ and $S_\lambda(L') = S(L')/(c-\lambda)$.  
\begin{itemize}
\item[(0)]  Let $I$ be a nonzero Poisson ideal of  $S_\lambda(L)$ and let $I'$ be a nonzero Poisson ideal of $S_\lambda(L')$.
Then $S_\lambda(L)/I$ and $S_\lambda(L')/I'$ have polynomial growth.  
\item[(1)] In fact, if $\lambda \neq 0$ then $S_\lambda(L)$ and $S_\lambda(L')$ are {\em Poisson simple} in the sense that they have no nontrivial Poisson ideals.
\item[(2)] Any nonzero Poisson ideal of $S(L)$ or of $S(L')$ contains a nonzero element of $\kk[c]$; equivalently,   $S(L')\otimes_{\kk[c]}\kk(c)$ and $S(L)\otimes_{\kk[c]}\kk(c)$ are Poisson simple.

\end{itemize}
\end{theorem}

As an immediate consequence of  Theorem~\ref{ithm:U} we   compute the annihilators of a large class of representations of $L$, and in particular for {\em all} nontrivial integrable  highest weight representations of $L$: we  show these annilators are all centrally generated.
\begin{theorem}
 \label{ithm:annihilator}
Let  $N$ be a nonzero representation of $L$ so that $(c-\lambda)N=0$ for some $\lambda \neq 0$.  Then $\ann_{U(L)}(N) = (c-\lambda)$.  
In particular, if 
 $M$ is a nontrivial  integrable highest weight represention of $L$ with central character $\lambda$,
then $\ann_{U(L)}(M) = (c-\lambda)$.
\end{theorem}
\begin{proof} Theorem~\ref{ithm:U}(1) shows that $(c-\lambda)$ is a maximal ideal of $U(L)$.  The second statement is an immediate consequence, since nontrivial highest weight representations of $L$ have nontrivial central character.
\end{proof}
Theorem~\ref{ithm:annihilator} extends Vyjayanthi Chari's 
well-known calculation of the annihilators of Verma modules for infinite-dimensional symmetrisable Kac-Moody algebras \cite{Chari}.   
To our knowledge,  no annihilator of a (nontrivial) integrable  highest weight representation of an affine algebra has been known until now.

 Our investigation was partially motivated by work of Natalia Iyudu and the second author \cite{IS}, showing that the analogue of Theorem~\ref{ithm:U}(0) holds for the Virasoro Lie algebra:  central quotients of the enveloping algebra of the Virasoro algebra have just-infinite growth. 
  (See \cite[Theorem~1.2]{IS}.)  
  Affine Kac-Moody algebras (more precisely, their derived subalgebras) and the Virasoro algebra are both central extensions of a graded simple Lie algebra of linear growth, and their representation theory is related through the Sugawara construction. 
   It is thus natural to ask whether results for the Virasoro algebra can be extended to cover affine algebras.  However, affine algebras are much more commutative than the Virasoro algebra, where the centralisers of elements are in general finite-dimensional (in fact two-dimensional); thus, naively, two-sided ideals in their enveloping algebras are smaller.  
It is surprising to find that the two-sided  structure of their enveloping algebras is similarly constrained.  

The phrasing of Theorem~\ref{ithm:U}
 appears  redundant:  of course, since $U_\lambda(L)$ is simple for $\lambda \neq 0$, it has just-infinite growth.  However, it reflects the structure of the paper.  We focus first on proving Theorems~\ref{ithm:U}(0) and \ref{ithm:S}(0).
 Our proof strategy here  is broadly similar to the methods of \cite{IS}, but significantly more delicate because of the commutativity problem:  centralisers in $L$ are large in general and so the adjoint action of $L$ is more difficult to control.
Note that to prove  just-infinite growth for a (left and right) non-noetherian algebra like $U_\lambda(L)$, it is helpful for as many commutators as possible to be nonzero in order to ensure that two-sided ideals are big.

To prove Theorems~\ref{ithm:U}(0) and \ref{ithm:S}(0), we
construct an ordered PBW basis for $U_\lambda(L)$ and then show, through analysing the adjoint action of $L$ on $U_\lambda(L)$, that if $B$ is  a    sufficiently large  element of this basis, then there is an element of $J$ with leading term $B$.  
(See Lemma~\ref{lem:reduction}.)  
This allows us to reduce almost all basis elements, modulo $J$, to smaller elements and thus to bound the growth of $U_\lambda(L)/J$.

We work first with the symmetric algebra of the positive current algebra $t \mf g[t]$ of $\mf g$.
This is the associated graded ring of $U(t \mf g[t])$ under the natural (length) filtration and so has a Poisson algebra structure; further it is finitely graded under the  grading induced by giving elements of $\mf g$ degree 0 and $t$ degree 1.  
We show that if $I$ is a nontrivial Poisson ideal of $S(t\mf g[t])$, then we can reduce almost all monomials $B$ in an ordered PBW basis of $S(t \mf g[t])$ to smaller monomials, modulo $I$.  
This reduction result allows us to prove (Theorem~\ref{thm:current}) that $U(\mf g[t])$ has just-infinite growth.
We then extend our analysis to $U(\mf g[t, t^{-1}])$, to general $U_\lambda(L')$, and to $U_\lambda(L)$.

A corollary of Theorem~\ref{ithm:U}(0) is  the following somewhat counterintuitive result:
\begin{proposition}\label{iprop4}
[Proposition~\ref{prop:Icapcurrent}, Corollary~\ref{cor:cute}]
Let $\mf g$ be a finite-dimensional simple Lie algebra, and let $J$ be a nontrivial ideal of the enveloping algebra $U(\mf g[t,t^{-1}])$ of the loop algebra of $\mf g$.
If $X \subseteq \ZZ$ is any infinite set (for example, $X$ consists of all the primes), and $g \in \mf g \setminus \{0\}$, then $J$ contains a (nonzero) element involving only Lie algebra elements  of the form $g t^x$ for $x \in X$.  

In particular, if $L$ is the (untwisted) affine algebra associated to $\mf g$ and $J$ is a nonzero ideal of $U(L)$, then   $J \cap U(\mf g[t])\neq (0)$.
\end{proposition}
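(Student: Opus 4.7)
For the first assertion, I would observe that the elements $\{gt^x : x \in X\}$ commute pairwise in $U(\mf g[t,t^{-1}])$, since $[gt^x, gt^y] = [g,g]t^{x+y} = 0$, and by the PBW theorem they are linearly independent. Hence the unital subalgebra $A$ they generate is the polynomial ring $\kk[z_x : x \in X]$ on an infinite set of variables, so $\GK(A) = \infty$. On the other hand, Theorem~\ref{ithm:U} applied with $\lambda = 0$ (identifying $U_0(L') \cong U(\mf g[t,t^{-1}])$) forces $U(\mf g[t,t^{-1}])/J$ to have polynomial growth, hence finite GK-dimension. If $J \cap A$ were zero, then $A$ would embed into the quotient, a contradiction; so $J \cap A \neq 0$, yielding the required element.

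For the second assertion, my plan is to run essentially the same argument inside a suitable central quotient of $U(L)$. Since $\kk[c]$ is central in $U(L)$ and $U(L)$ is free as a $\kk[c]$-module (by PBW), one has $\bigcap_{\lambda \in \kk} (c-\lambda) U(L) = 0$, so the image $\bar J_\lambda$ of $J$ in $U_\lambda(L)$ is nonzero for cofinitely many $\lambda$; fix such a $\lambda$. By Theorem~\ref{ithm:U}, $U_\lambda(L)/\bar J_\lambda$ has polynomial growth, and hence finite GK-dimension. A straightforward PBW computation (using that $c$ is central) gives $(c-\lambda)U(L) \cap U(\mf g[t]) = 0$, so $U(\mf g[t])$ embeds into $U_\lambda(L)$. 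Since $U(\mf g[t])$ contains, for any nonzero $h \in \mf g$, the polynomial subring $\kk[ht, ht^2, ht^3, \ldots]$ (its generators commute because $[ht^n, ht^m] = [h,h]t^{n+m} = 0$) and therefore has infinite GK-dimension, the composition $U(\mf g[t]) \hookrightarrow U_\lambda(L) \twoheadrightarrow U_\lambda(L)/\bar J_\lambda$ has nonzero kernel, producing $0 \neq y \in U(\mf g[t])$ whose image lies in $\bar J_\lambda$.

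The main obstacle is the final lifting step. The $y$ just produced only satisfies $y = x + (c-\lambda)w$ for some $x \in J$ and $w \in U(L)$, and to land in $J \cap U(\mf g[t])$ one must eliminate the $(c-\lambda)$-correction. I would attempt this either by exploiting the cofinitely many available $\lambda$ (combining or interpolating lifts against the free $\kk[c]$-module structure of $U(L)$) or by choosing $y$ judiciously within its $\kk[c]$-coset so that the lift lies in $U(\mf g[t])$ itself; this bookkeeping is where I expect the real work of the second assertion to lie.
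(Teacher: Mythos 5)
Your first argument is correct and matches the paper exactly: Proposition~\ref{prop:cute} applies the just-infinite growth of $U(\mf g[t,t^{-1}])$ (Theorem~\ref{ithm:U} with $\lambda = 0$) together with monotonicity of GK-dimension to show that $J$ meets $U(\mf k)$ for every infinite-dimensional Lie subalgebra $\mf k \subseteq \mf g[t,t^{-1}]$, and Corollary~\ref{cor:cute} specializes to the abelian $\mf k = \spann\{gt^x : x \in X\}$, just as you do.

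Your worry about the lifting step in the second assertion is not mere caution --- the obstacle is genuinely unresolvable for an arbitrary nonzero ideal of $U(L)$. Take $J = cU(L)$: this is a nonzero two-sided ideal, yet by the PBW theorem every nonzero element of $cU(L)$ involves $c$, while elements of $U(\mf g[t]) \subset U(L)$ do not, so $cU(L) \cap U(\mf g[t]) = 0$. The statement should therefore be read, as the cited Proposition~\ref{prop:Icapcurrent} makes precise, with $J$ a nonzero ideal of a fixed central quotient $U_\lambda(L)$ --- equivalently, an ideal of $U(L)$ that strictly contains $(c-\lambda)U(L)$ for some $\lambda$, with $U(\mf g[t])$ identified with its isomorphic image in $U_\lambda(L)$. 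With that reading your argument closes with no lifting at all: Theorem~\ref{ithm:U} gives $\GK U_\lambda(L)/J < \infty$, the image of $U(\mf g[t])$ in $U_\lambda(L)$ has infinite GK-dimension (it contains the polynomial ring $\kk[ht, ht^2, \dots]$), so the composite $U(\mf g[t]) \to U_\lambda(L)/J$ has nonzero kernel. Note that your route is actually lighter than the paper's: Proposition~\ref{prop:Icapcurrent} precedes Theorem~\ref{ithm:U} in the logical order (it feeds into Lemma~\ref{lem:reduction}, which is used to prove Theorem~\ref{ithm:U}), so the paper cannot appeal to Theorem~\ref{ithm:U} there and instead bounds $\GK_{U(L_+)}\bigl(U_\lambda(L)/J\bigr)$ directly from the reduction procedure of Lemma~\ref{lem:extendedreduction}. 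Your shortcut is a perfectly good stand-alone proof of the corrected statement once Theorem~\ref{ithm:U} is in hand.
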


Proposition~\ref{iprop4} allows us to conclude that if $e$ is an element of the nilradical of the positive Borel of $\mf g$, then any nontrivial Poisson ideal of $U_\lambda(L)$ contains a nonzero element $E \in \kk[ \dots, e t^{-1}, e, et, \dots]$.  
By considering the adjoint action of the Cartan subalgebra of $\mf g$ on $E$, we are able to reduce the length of $E$ and thus, by induction prove Theorem~\ref{ithm:U}(1).  Similar techniques give Theorem~\ref{ithm:U}(2) and parts $(1)$, $(2)$ of Theorem~\ref{ithm:S}.

\smallskip

To conclude the introduction, we briefly describe the structure of the paper.   
Section~\ref{BASICS} establishes notation and basic facts about affine Lie algebras and loop algebras, as well as the basics of growth and GK-dimension.
Key   results on Poisson ideals of  symmetric algebras of  positive current algebras are proved in Section~\ref{TWIST}.
In Section~\ref{USEFUL} we prove preparatory results needed to extend our methods from $U(t\mf g[t])$ to $U_\lambda(L)$ before proving Theorems~\ref{ithm:U}(0) and \ref{ithm:S}(0) in Section~\ref{PROOFS}. 
In Section~\ref{SIMPLE} we prove the remaining parts of Theorems~\ref{ithm:U} and \ref{ithm:S}.
Finally, in Section~\ref{APPLICATIONS} we  give several other applications of Theorem~\ref{ithm:U}, including Proposition~\ref{iprop4}.

\smallskip

\noindent{ \bf Acknowledgements: }

The first author is supported, and the second author is partially supported, by the EPSRC grant EP/T018844/1.  
We thank the EPSRC for their support.

We are grateful to 
Lucas Calixto, 
Vyjayanthi Chari,
Travis Scrimshaw, 
and Sankaran Viswanath for helpful discussions and ideas.

\section{Bases and basic facts}\label{BASICS}
If $L$ is an affine Kac-Moody algebra with central element $c$ then there is a finite-dimensional simple Lie algebra $\mf g$ and $\sigma \in \Aut(\mf g)$ so that $L'/(c)$ is isomorphic to the (possibly twisted) loop algebra $ \mf g[t, t^{-1}]^\sigma$.
In this section we establish notation and basic facts about loop algebras, and further establish ordered bases for loop  algebras and for their enveloping/symmetric algebras, which we will use in the next section. 
We also recall some background on growth and Gelfand-Kirillov dimension of algebras and modules.

We adopt the following notation:
Let $\mf g$ be a finite dimensional simple Lie algebra.
Let $\sigma \in \Aut(\mf g)$ which (without loss of generality) we assume comes from an automorphism of the Dynkin diagram of $\mf g$.
We also let $\sigma$ denote the diagram automorphism and the associated automorphism of the root system of $\mf g$.
Let $r$ be the order of $\sigma$. 
Let $\eta$ be a primitive $r$'th root of unity. 
We extend the action of $\sigma$ to $\mf g[t, t^{-1}]$ and denote the corresponding twisted current algebra by $\mf g[t]^\sigma$ and the twisted loop algebra by $\mf g[t, t^{-1}]^\sigma$.

\subsection{Basic facts about $ \mf g$} \label{ssBASICS}
In this subsection we establish notation and basic facts about $\mf g$.  
 Fix a Cartan subalgebra $\mf h$ of $\mf g$.
If $\alpha \in \mf h^*$, we denote the $\alpha$-eigenspace of $\mf g$ by $\mf g_\alpha$.  

If $s \in \ZZ$, let $\overline s$ denote the congruence class of $s$ in $\ZZ/r\ZZ$. 
The automorphism $\sigma$ induces  a $\ZZ/r\ZZ$-grading $\mf g = \bigoplus_{s=0}^{r-1} \mf g_{\overline s}$ of $\mf g$, where $\mf g_{\overline s} = \{ g \in \mf g : \sigma(g) = \eta^s g \}$.
If $g \in \mf g_{\overline s}$, we write $|g| = \overline s$.
The $\mf{g}_{\overline{s}}$ for $0 \leq s \leq r-1$ are irreducible  $\mf{g}_{\overline{0}}$-modules under the adjoint action.

Let $\mf{h}_{\overline{0}} = \mf g_{\overline 0} \cap \mf h$ be the Cartan subalgebra of $\mf{g}_{\overline{0}}$. 
As in \cite[p. 130]{Kac}, for $\overline s \in \ZZ/r\ZZ$ let  $\Delta_{\overline s}$ be the set of nonzero weights of $\mf h_{\overline 0}$ on $\mf g_{\overline s}$ and define the weight space decomposition
\[ \mf g_{\overline s} = \bigoplus_{\alpha \in \Delta_{\overline s} \cup \{0\}} \mf g_{\overline{s}, \alpha}.\]
We say an element $g \in \mf g$ is {\em weight-homogeneous} if $g$ is in some $\mf g_{\overline s, \alpha}$.
Any weight-homogeneous element is by definition $\sigma$-equivariant.
It is clear that $[\mf{g}_{\overline s, \alpha}, \mf{g}_{\overline p, \beta}]\subseteq \mf{g}_{\overline s+ \overline p, \alpha+\beta}$.

We observe the following basic facts.  As these are standard, most are stated without proof.
\begin{itemize}
\item[(0)]  We have $\mf g[t, t^{-1}]^\sigma = \bigoplus_{s \in \ZZ} \mf g_{\overline s} t^s$.  
    \item[(1)] 
    The automorphism $\sigma$ preserves the set of simple roots of $\mf g$ and so the height of elements of $\mf g$.  
    In particular, the positive Borel subalgebra $\mf b^+$ of $\mf g$ and the nilpotent radical $\mf n^+$ of $\mf b^+$ are $\sigma$-invariant and thus decompose as  sums
    \[ \mf n^+ = \bigoplus_{s = 0}^{r-1} \mf n^+_{\overline s} \]
    and
        \[ \mf b^+ = \bigoplus_{s = 0}^{r-1}  \mf b^+_{\overline s} \]
of $\sigma$-weight spaces.  
 
 \item[(2)] For each $\overline s \in \ZZ/r\ZZ$ and each $\alpha \in \Delta_{\overline s}$ we either have $\mf g_{\overline{s}, \alpha} \subseteq \mf n^+$ 
or $\mf g_{\overline{s}, \alpha} \subseteq \mf n^-$.
 Thus we can define
 \[ \Delta^{+}_{\overline s} = \{ \alpha \in \Delta_{\overline s} : \mf g_{\overline{s}, \alpha} \subseteq \mf n^{+}\}.\]
Let $\Delta^+ = \{ (\overline s, \alpha) : \alpha \in \Delta_{\overline s}^+ \} $.
Similarly, define $\Delta^-_{\overline s}$ and $\Delta^-$. 
Let $\Delta = \Delta^+ \cup \Delta^- = \{(\overline s, \alpha) : \alpha \in \Delta_{\overline s}\}$.

 \item[(3)] Let  $\theta$ be the longest root of $\mf g$. There are $(\overline s, \alpha) \in \Delta^+$ so that $\mf g_\theta = \mf g_{\overline{s}, \alpha} $.

 \item[(4)] For $\alpha \neq 0$ the spaces  $\mf g_{\overline{s}, \alpha}$ are one-dimensional, so we may choose a generator $g_{\overline{s}, \alpha}$.
 Also, if $\gamma $ is a positive roof of $\mf g$, let $e_\gamma, f_\gamma$ be the Chevalley generators, respectively, of $\mf g_\gamma$ and $\mf g_{-\gamma}$.
Let $h_\gamma = [e_\gamma, f_\gamma]$.  

 \item[(5)] 
 If $(\overline p, \beta) \in \Delta^\pm$, then there are $(\overline s_1, \gamma_1), \cdots , (\overline s_k, \gamma_k) \in \Delta^\mp$ so that
 \[[\mf{g}_{\overline s_k, \gamma_k},[\cdots, [ \mf{g}_{\overline s_1, \gamma_1}, \mf{g}_{\pm \theta}]\cdots ]]=\mf{g}_{\overline p,\beta}.\]

\item[(6)] The centralizer of $\mf{h}_{\overline 0}$ in $\mf{g}$ is a Cartan subalgebra of $\mf{g}$ and is thus equal to $\mf h$, and so 
\[\mf h = \mf g_{\overline 0,0} \oplus \dots \oplus \mf g_{\overline{r-1},0},\] 
and $\mf h_{\overline 0} = \mf g_{\overline 0, 0}$.

\item[(7)]  If $(\overline s, \alpha) \in \Delta^+$, then $(\overline{-s}, -\alpha) \in \Delta^-$
 and $[\mf{g}_{\overline s, \alpha},\mf{g}_{\overline{-s}, -\alpha}] \subseteq \mf{h}_{\overline 0}$.
 Let $\kappa$ be the Killing form of $\mf g$, which restricts to a nondegenerate bilinear form on $\mf h_{\overline 0}$, and let $\nu: \mf h_{\overline 0} \to \mf h_{\overline 0}^*$ be the induced isomorphism.
Then $[\mf{g}_{\overline s, \alpha},\mf{g}_{-\overline s,-\alpha}]=\kk \nu^{-1}(\alpha)$.
Further, there is a positive root $\gamma$ of $\mf g$ so that
\[ \mf g_{\overline s, \alpha} \subseteq \sum_{i=0}^{r-1} \mf g_{\sigma^i(\gamma)}.\]

Let $r'$ be the $\sigma$-order of $\gamma$.  Then, 
up to nonzero scalar multiple,
$$(g_{\overline s, \alpha},g_{\overline{- s},-\alpha})=
\begin{cases}
(e_{\gamma}+\eta e_{\sigma(\gamma)}+ \eta^2 e_{\sigma^2(\gamma)},f_{\gamma}+\eta^2 f_{\sigma(\gamma)}+\eta f_{\sigma^2(\gamma)}) \text{ if } r'=3 \text{ and } \overline s= \overline 1, \\
(e_{\gamma}+\eta^2 e_{\sigma(\gamma)}+ \eta e_{\sigma^2(\gamma)},f_{\gamma}+\eta f_{\sigma(\gamma)}+\eta^2 f_{\sigma^2(\gamma)}) \text{ if } r'=3 \text{ and } \overline s= \overline 2, \\
(e_{\gamma}+e_{\sigma(\gamma)}+ e_{\sigma^2(\gamma)},f_{\gamma}+f_{\sigma(\gamma)}+f_{\sigma^2(\gamma)}) \text{ if } r'=3 \text{ and } \overline s=\overline 0, \\
(e_{\gamma}+e_{\sigma(\gamma)},f_{\gamma}+f_{\sigma(\gamma)}) \text{ if } r'=2 \text{ and } \overline s=\overline 0, \\
(e_{\gamma}-e_{\sigma(\gamma)},f_{\gamma}-f_{\sigma(\gamma)})  \text{ if } r' =2 \text{ and } \overline s=\overline 1 ,\\
(e_{\gamma},f_{\gamma}) \text{ if } r'=1 
\end{cases}
$$
Also,  
$$
[g_{\overline s, \alpha},g_{\overline{- s}, -\alpha}]=
\begin{cases}
h_{\gamma}+h_{\sigma(\gamma)}+h_{\sigma^2(\gamma)} \text{ if } r'=3. \\
h_{\gamma}+h_{\sigma(\gamma)} \text{ if } r'=2.\\
h_{\gamma} \text{ if } r'= 1.
\end{cases}
$$
Finally, up to scalars $\{g_{\overline s, \alpha}, g_{\overline{- s}, -\alpha}, [g_{\overline s, \alpha},g_{\overline{- s}, -\alpha}]\}$ forms an $\mf{sl}_2$-triple.

As $\sigma$ preserves the height of elements, each $g_{\overline s, \alpha}$ has a well-defined height in terms of the root system of $\mf g$.
\item [(8)]
When $\sigma \neq \operatorname{id}$ (that is, when $r=2, 3$) we give notation for a $\sigma$-equivariant basis of $\mf h$ in Table~\ref{table1}.
In this case $\mf g$ must be type $A$, $D$, or $E$.
Let the simple roots of $\mf g $ be $\alpha_1, \alpha_2, \dots$ and let 
$h_i=[e_{\alpha_i},f_{\alpha_i}]$ for $1 \leq i \leq \dim \mf{h}$.
We number the simple roots of $\mf g$ as in \cite[Table Fin, p. 53]{Kac}.
\begin{table}[h!]
\begin{center}
\caption{Basis for Cartan subalgebra} \label{table1}
\begin{tabular}{|c|c|c|c|c|}
\hline 
r & $\mf{g}$ & $\mf{h}_{\overline 0}$ & $\mf{h}_{\overline 1}$ & $\mf{h}_{\overline 2}$\\
\hline
2 & $A_{2\ell}, \ell \geq 1$ & $\{ h_i+h_{2\ell-i+1}: 1 \leq i \leq \ell\}$ & $\{ h_i-h_{2\ell-i+1}: 1 \leq i \leq \ell\}$ & 0\\ 
\hline
2 & $A_{2\ell-1}, \ell \geq 1$ & $\{h_i+h_{2\ell-i}: 1 \leq i \leq \ell-1,h_{\ell}\}$ & $\{ h_i-h_{2\ell-i}: 1 \leq i \leq \ell-1\}$ & 0\\
\hline
2 & $D_{\ell+1}, \ell \geq 3$ & $\{h_i: 1 \leq i \leq \ell-1, h_{\ell}+h_{\ell+1}\}$ & $ h_{\ell}-h_{\ell+1}$ & 0\\
\hline
2 & $E_6$ & $h_1+h_5$, $h_2+h_4$, $h_3$, $h_6$ & $h_1-h_5, h_2-h_4$ & 0\\
\hline
3 & $D_4$ & $h_1+h_3+h_4,h_2$ & $ h_1+\eta h_3+\eta^2 h_4$ & $ h_1+\eta^2 h_3 +\eta h_4$\\
\hline
\end{tabular}
\end{center}
\end{table}

\item[(9)]
Let $\mc B_{\Delta} = \{ g_{\overline s, \alpha} : (\overline s, \alpha) \in \Delta\} $ and
 let 
\[ h^i_{\overline s} = \sum_{j=0}^{r'} \eta^{sj} h_{\sigma^j(i)}.\]
(Here $r'$ is the $\sigma$-order of the simple root $\alpha_i$.)
Then
\[ \mc B = \mc B_{\Delta} \cup \{h^i_{\overline s}\}\]
is a $\sigma$-equivariant basis of $\mf g$.
If $0 \leq s \leq r-1$, let $\mc B_{\overline s} = \{ x \in \mc B : |x| =s\}$.
\end{itemize}

\subsection{Two monomial orders on $S(\mf g[t,t^{-1}]^\sigma)$}  \label{ssORDER}

In this section, we define PBW bases of $S(\mf g[t, t^{-1}]^\sigma)$ and $U(\mf g[t, t^{-1}]^\sigma)$.
We will refer to elements of these bases as {\em monomials}, and we will define two different orders on monomials.
The interplay between these monomial orders is key to proving our reduction results.

We first extend the $\sigma$-equivariant basis $\mc B$ of $\mf g$ to a basis of $\mf g[t, t^{-1}]^\sigma$.
Let $\mc B[t, t^{-1}]^\sigma =\{ g t^{r n + |g|} : g \in \mc B, n \in \ZZ\}$, which is a basis of $\mf g[t,t^{-1}]^\sigma$.
Let $\mc B[t]^\sigma = \mc B[t,t^{-1}]^\sigma \cap \mf g[t]^\sigma$.
We will need an order on $\mc B[t, t^{-1}]^\sigma$, which we define as follows.

We first give an order on the roots of $\mf g$.
Let $R$ be the set of roots of $\mathfrak{g}$ and let $R^+, R^-$ be  respectively the sets of positive and negative roots.
Let  $\{\alpha_i: i \in \{1,\cdots,n\}\}$ be the set of simple roots of $\mathfrak{g}$.
If $\alpha=\sum b_i \alpha_i$ is a positive root, then we define the height of $\alpha$ to be $\height \alpha = \sum b_i$.
Recall that we denote the highest root of $\mathfrak{g}$ by $\theta$. 

We will define a total order $<$ on $R$ by:
\begin{itemize}
    \item $\alpha < \beta \text{ if } \height\alpha < \height\beta$.
    \item If $\alpha=\sum b_i \alpha_i$, $\beta=\sum c_i \alpha_i$ and $\height\alpha=\height\beta$, then $\alpha < \beta$ if the tuple $(b_1,\cdots,b_n)$  is less than the tuple $(c_1,\cdots,c_n)$ in lexicographic order.
\end{itemize}
For each $\beta \in R$ 
let $g_\beta = e_\beta$ if $\beta \in R^+$ and $g_\beta = f_{-\beta}$ if $\beta \in R^-$.
Then
\[ \mc C = \{ g_\beta:  \beta \in R\} \cup \{ h_{\alpha_i}: 1 \leq i \leq n\}\]
is a basis for $\mf g$.
Let  $(\beta_1,\cdots, \beta_{\ell})$ be an enumeration of $R^+$ written in increasing order w.r.t. the above total order (so $\beta_\ell = \theta$).
We define the following total order on $\mc C$:
$$g_{-\beta_{\ell}} < \cdots < {g}_{-\beta_{1}} < {h}_{\alpha_1} < \cdots < {h}_{\alpha_n}<
{g}_{\beta_1} < \cdots < {g}_{\beta_{\ell}}.$$

Given $g \in \mf g$, define the {\em $\mc C$-leading term} of $g$, which we denote by $LT_{\mc C}(g)$, to be the largest element of $\mc C$ occurring in $g$ with nonzero coefficient.
(The reason for the subscript $\mc C$ in the notation is to distinguish this from other uses of the terminology ``leading term'' in this paper.)

The ordering defined above on $\mc C$ has the property that:
\beq\label{orderC}
\mbox{ if $x,y,z \in \mc C$ with $[x,y] \neq 0 \neq [x,z]$, then } y < z  \iff  LT_{\mc C}[x,y] < LT_{\mc C}[x, z].
\eeq

We now induce orderings on each  basis $\mc B_{\overline s}$ of $\mf g_{\overline s}$ from our order on $\mc C$.  
Let $0 \leq s \leq r-1$.  
If $x,y \in \mc B_{\overline s}$, define
\[ x<y \quad \iff \quad LT_{\mc C}(x) < LT_{\mc C}(y).\]
(We note here that it  can be seen by inspection that, since every element of $\mc B_{\overline s} $ is associated to a unique $\sigma$-orbit in $\mc C$,  if $LT_{\mc C}(x) = LT_{\mc C}(y)$ then $x=y$.)
For $g \in \mf g_{\overline s}$, we  define $LT_{\mc B}( g )$ to be the largest element of $\mc B$ occurring in $g$ with nonzero coefficient.

It follows from \eqref{orderC} that
\beq\label{orderB}
\begin{split}
\mbox{if $x \in \mc B_{\overline s}$, $y \in \mf g_{\overline s}$ with $LT_{\mc B}(y)< x$, and $z$ is weight-homogeneous with $[z,x] \neq 0 \neq [z,y]$,}\\
\mbox{ then $LT_{\mc B}[z,y]<LT_{\mc B}[z,x]$.}
\end{split}
\eeq

As an example, we give the orderings on the $\mc B_{\overline s}$ in the case $\mf g = A_2$, $|\sigma| = 2$.  
We have
\[ \mc B_{\overline 0}: g_{-\alpha_1} < g_{-\alpha_2} < h_1 +h_2 < g_{\alpha_2} < g_{\alpha_1},\]
\[ \mc B_{\overline 1}:  g_{-\theta} < h_1 -h_2 < g_\theta.\]

 Using the total order $<$ on the $\mc B_{\overline s}$, we define a total order on the basis $\mc B[t, t^{-1}]^\sigma$ of $\mf{g}[t, t^{-1}]^{\sigma}$ as follows:
If $x t^{a}, yt^{b} \in \mc B[t, t^{-1}]^{\sigma}$, then $xt^{a} < yt^{b}$ iff
\begin{itemize}
    \item $a < b$; or
    \item  $a = b$, and $x < y$ in the total order defined on $\mc B_{\overline a}$.
\end{itemize}

We now define a basis of normal words or standard monomials for $S(\mf g[t,t^{-1}]^\sigma)$ and $U(\mf g[t, t^{-1}]^\sigma)$.
We will refer to elements of $\mc B[t, t^{-1}]^\sigma$ as {\em letters}, as ``words'' (or monomials) in these letters will span the algebras of interest.
A {\em standard monomial} or {\em normally ordered monomial} in  the letters $\mc B[t,t^{-1}]^\sigma$ is an expression of the form:
\beq\label{standard} M = g_1 t^{r m_1 + |g_1|} \dots g_k t^{rm_k+ |g_k|} ,\eeq
where the $g_i\in \mc B$, and $g_1 t^{r m_1 + |g_1|} \leq \dots \leq g_k t^{rm_k+ |g_k|}$.
We sometimes write a standard monomial as
\[ M = g_1 t^{n_1} \dots g_k t^{n_k},\]
and when we do so we assume that $\overline{n_a} = |g_a|$ for all $a$: in other words, that each $g_i t^{n_i} \in \mf g[t, t^{-1}]^\sigma$, so $M 
\in  S(\mf g[t, t^{-1}]^\sigma)$.
We say that $k$ is the {\em length} of $M$, which we denote $\len M$, and that the total $t$-power $\sum_a n_a $ is the {\em degree} of $M$, which we denote $\deg(M)$.

We introduce two total orderings on the set of standard monomials. 
For two standard monomials $M_1$ and $M_2$, we write $M_1 < M_2$ if 
\begin{itemize}
    \item $\len M_1 < \len M_2$ or 
    \item $\len M_1= \len M_2$ and $\deg M_1 < \deg M_2$ or
    \item $\len M_1 = \len M_2$, $\deg M_1 = \deg M_2$, and $M_1$ is less than $M_2$ with respect to  the {\em left to right} lexicographic order when both $M_1$ and $M_2$ are written in increasing (that is, normal) order.
\end{itemize}
Similarly, we write $M_1 \prec M_2$ if
\begin{itemize}
    \item $\len M_1 < \len M_2$ or 
    \item $\len M_1 =\len M_2$ and $\deg M_1 < \deg M_2$ or
    \item $\len M_1=\len M_2$, $\deg M_1 = \deg M_2$, and $M_1$ is less than $M_2$ with respect to the  {\em  right to left} lexicographic order when both $M_1$ and $M_2$ are written in increasing order.
\end{itemize}

By the PBW theorem, both $U(\mf g[t, t^{-1}]^\sigma)$ and $S(\mf g[t, t^{-1}]^\sigma)$ have a basis of standard monomials.
Given nonzero $F\in U(\mf g[t, t^{-1}]^\sigma)$ or $F \in S(\mf g[t, t^{-1}]^\sigma)$, we define $LT_< F $, respectively $LT_\prec F$, to be the $<$-largest, respectively $\prec$-largest, standard monomial occurring in $F$ with nonzero coefficient when $F$ is written as a linear combination of standard monomials.

\subsection{Growth} \label{ssGROWTH}
In this short subsection we recall some definitions and basic facts about the growth of algebras and modules.  
Let $R$ be a finitely generated associative $\kk$-algebra and let $M$ be a finitely generated representation of $R$.
Then  $R$ has {\em polynomial growth} if there is a polynomial $p(t) \in \RR[t]$ so that, for every finite-dimensional subspace $V$ of $R$, $\dim V^n \leq p(n)$ for all sufficiently large $n$. 
Further, $R$ has {\em exponential growth} if there exists $V \subset R$ so that $\lim_{n\to \infty} (\dim V^n)^{1/n} > 1$ (note that this limit always exists) and (strictly) {\em subexponential growth} if $R$ has neither exponential or polynomial growth.
The growth of $M$ is defined similarly; here we let $V \subset R$ and $ W \subset M$ be finite-dimensional vector spaces and consider $\dim V^n W$ as $n \to \infty$.
If $\mf k$ is an infinite-dimensional Lie algebra of polynomial growth (for example, an affine or loop algebra) then  $U(\mf k)$ has subexponential growth by  \cite{Smith}.

The idea of growth may be refined through considering the {\em Gelfand-Kirillov dimension} or $\GK$ of $R$ and of $M$.
Here we define
\[ \GK R = \sup_V \varlimsup \log_n \dim V^n,\]
where the supremum is taken over all finite-dimensional subspaces $V$ of $R$.
Likewise,
\[ \GK M = \sup_{V, W} \varlimsup \log_n \dim V^n W,\]
where the supremum is taken over all finite-dimensional subspaces $V$ of $R$ and $W$ of $M$.
If $R$ is a finitely generated $\kk$-algebra and $M$ is a finitely generated $R$-module, let $V$ be any finite-dimensional generating subspace of $R$ that contains 1, and let $W$ be any finite-dimensional generating subspace of $M$.
Then
\[ \GK R =  \varlimsup \log_n \dim V^n, \quad \GK M =  \varlimsup \log_n \dim V^n W,\]
and does not depend on the choice of $V$ or $W$.

Note that $R$ (respectively, $M$) has polynomial growth if and only if $\GK R < \infty$ (respectively, $\GK M < \infty$).
We say that $R$ has {\em just-infinite growth} (equivalently, {\em just-infinite GK-dimension}) if $\GK R = \infty$ but for any nontrivial ideal $J$ of $R$, then $\GK R/J < \infty$.
For more background about growth and GK-dimension, we refer the reader to \cite{KL}.

If $\mf k$ is a Lie algebra, then $\GK U(\mf k) = \dim \mf k$.  
Further, by \cite{Smith}, if $L$ is an affine Kac-Moody algebra, then the central  quotients $U_\lambda(L)$ and $U_\lambda(L')$ have intermediate growth.

We will repeatedly use the following basic fact about growth of algebras:
.
\begin{scholium}\label{scholium}
Let $K$ be a field and let $A \subseteq B$ be $K$-algebras.
Suppose that $\GK A = \infty$ and that $J $ is an ideal of $B$ such that $B/J$ has polynomial growth.
Then $J\cap A \neq (0)$.
\end{scholium}
\begin{proof}
This follows  directly from the fact that GK-dimension does not increase on subalgebras \cite[Lemma~3.1]{KL}, so we cannot have $A \hra B/J$.
\end{proof}

\section{A reduction result for symmetric algebras of current algebras}\label{TWIST}

Let $\mf g$ be a finite-dimensional simple Lie algebra and let $\sigma \in\Aut(\mf g)$.
We adopt the notation of Section~\ref{BASICS}; in particular, let $r$ be the order of $\sigma$.
The {\em positive twisted current algebra} of $\mf g$ is the subalgebra $(t g[t])^\sigma$ of the (twisted) loop algebra $\mf g[t, t^{-1}]^\sigma$.  
In this section, we consider the symmetric algebra of the positive twisted current algebra of $\mf g$ and show that it has just-infinite growth as  a Poisson algebra:  any factor by a proper Poisson ideal has polynomial growth.
We will generalise this result in later sections to prove Theorems~\ref{ithm:U} and \ref{ithm:S}.

The symmetric algebra $S(\mf g[t, t^{-1}]^\sigma)$ is a Lie algebra under the Poisson bracket $\{ -, -\}$ induced from the Lie bracket on $\mf g[t, t^{-1}]^\sigma$.
That is, for $x,y \in \mf g[t, t^{-1}]^\sigma$ we have $\{x,y\} = [x,y]$, and $\{-,-\}$ is anticommutative and a derivation in each input.

A {\em Poisson ideal} of $S(\mf g[t, t^{-1}]^\sigma)$ is an ideal of the underlying commutative algebra which is also a Lie ideal for the Poisson bracket.
Note that a Lie ideal of $S(\mf g[t, t^{-1}]^\sigma)$ is also  a $\mf g[t, t^{-1}]^\sigma$-subrepresentation, where $\mf g[t, t^{-1}]^\sigma$ acts on $S(\mf g[t, t^{-1}]^\sigma)$ by the adjoint action $\ad gt^a = \{ gt^a, -\}$.
Each $S^m(\mf g[t, t^{-1}]^\sigma)$ is a $\mf g[t, t^{-1}]^\sigma$-subrepresentation of $S(\mf g[t, t^{-1}]^\sigma)$, although of course not a Poisson ideal.

Thus if $I$ is a Poisson ideal of $S(\mf g[t, t^{-1}]^\sigma)$, each $I \cap S^m(\mf g[t, t^{-1}]^\sigma)$ is a $\mf g[t, t^{-1}]^\sigma$-subrepresentation of $S(\mf g[t, t^{-1}]^\sigma)$.  
We will spend the bulk of this section analyzing the structure of such subrepresentations.
In fact, because we want our results to apply to Poisson ideals of current algebras, we will consider the structure of $( t \mf g[t])^\sigma$-subrepresentations of the $m$'th symmetric power $S^m(\mf g[t, t^{-1}]^\sigma)$.

Our key technical result is the following:
\begin{proposition}\label{newcor6}
Let $I$ be any nonzero $(t \mf g[t])^\sigma$ sub-representation of $S^m(\mf g[t, t^{-1}]^\sigma)$. 
There   are $n, \ell \in \ZZ$ 
such that for any  standard monomial $M= g_1 t^{i_1} \dots g_m t^{i_m}$  with $i_1 \geq n$, there is $H_M\in I$ such that $LT_{<}H_M =M$ and  all $t$-powers in $H_M$ are $\geq \ell$.
\end{proposition}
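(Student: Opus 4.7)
The plan is to start from any fixed nonzero $F_0 \in I$, with leading term $M_0 = LT_{<}(F_0) = g^{(0)}_1 t^{i^{(0)}_1} \cdots g^{(0)}_m t^{i^{(0)}_m}$, and to construct $H_M \in I$ for each target $M = g_1 t^{i_1} \cdots g_m t^{i_m}$ with $i_1 \geq n$ by iteratively applying Poisson brackets with elements of $(t\mf g[t])^\sigma$ to $F_0$. Since all standard monomials in $S^m$ have length $m$, the order $<$ compares total $t$-degree first and then left-to-right lex on the normal form. Because $(t\mf g[t])^\sigma$ has only positive $t$-powers, every bracket can only raise $t$-powers; the constant $\ell$ in the conclusion will be inherited from the smallest $t$-power appearing in $F_0$, while $n$ will be chosen large enough to justify the leading-term calculations below.

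The two key tools are as follows. For $h \in \mf h_{\overline 0}$ and large $N$, the action of $\ad(h t^{rN})$ on a weight-homogeneous monomial sends $g_1 t^{j_1} \cdots g_m t^{j_m}$ to $\sum_a \alpha_a(h)$ times the monomial in which only the $a$-th letter is shifted to $g_a t^{j_a + rN}$; choosing $h$ via the nondegeneracy of the Killing form on $\mf h_{\overline 0}$ (item (7) of \S\ref{ssBASICS}) so that a prescribed $\alpha_a(h) \neq 0$ while others vanish, and taking $N$ large enough that the shifted letter becomes the unique largest element of $\mc B[t,t^{-1}]^\sigma$ in the monomial, isolates a single $<$-leading term: the original monomial with the $a$-th letter removed from its position and $g_a t^{j_a + rN}$ appended at the end. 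Similarly, for a root vector $g$ and large $s$ with $s \equiv |g| \pmod r$, $\ad(g t^s)$ has the same effect but additionally replaces $g_a$ by a scalar multiple of $[g, g_a]$; by items (4) and (5) of \S\ref{ssBASICS}, iterated brackets with such root vectors can realise any target basis letter from any other, since each $\mf g_{\overline s}$ is an irreducible $\mf g_{\overline 0}$-module.

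Concatenating these moves, one transforms $M_0$ into any prescribed $M$ by a sequence of $m$ root-vector brackets with strictly increasing large $t$-powers $s_1 < \cdots < s_m$, each pulling the current leftmost letter of the leading monomial to the end while simultaneously adjusting its basis vector and its $t$-power to match the target. Setting $s_a = i_{\pi(a)} - i^{(0)}_a$ for a permutation $\pi$ realising the normal order of $M$ forces us to take $n > \max_a i^{(0)}_a$ (plus a little slack to keep the $s_a$'s strictly increasing and the transformed letters in the correct positions), while $\ell$ is then the smallest $t$-power appearing in $F_0$. The main technical obstacle is to verify at each step that the chosen bracket produces precisely the prescribed $<$-leading term, with no unwanted cancellation or competition among the $m$ terms generated by the Poisson bracket, and to ensure that the lower-order terms remain bounded below in $t$-power by $\ell$ throughout. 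This combinatorial bookkeeping, together with the requirement that the constants $n$ and $\ell$ depend only on $F_0$ and cover every target $M$ uniformly, is where the bulk of the work lies.
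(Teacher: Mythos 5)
There is a genuine gap in your approach, and it is not just "combinatorial bookkeeping": the whole strategy of tracking $LT_<$ iteratively under single brackets does not work. The issue is that for monomials of the same length and degree, applying $\ad(gt^s)$ with $s$ large removes the leftmost letter and appends a new one at the right, which reshuffles the left-to-right lexicographic comparison in a way that is not monotone. Concretely, take $\mf g = \mf{sl}_2$, $m=2$, and $M_1 = h\,t^2\cdot f\,t^3$, $M_2 = f\,t^2\cdot h\,t^3$, so $M_2 < M_1$. Applying $\ad(e\,t^{100})$ and computing $<$-leading terms gives $LT_<(\ad(e\,t^{100})M_1) = -2\, f\,t^3\cdot e\,t^{102}$ but $LT_<(\ad(e\,t^{100})M_2) = h\,t^3\cdot h\,t^{102}$, and the latter is the bigger one; so $LT_<(\ad(e\,t^{100})F)$ does not come from $LT_<(F)$. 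This means that after even one of your moves you lose control of which monomial of $F_0$ produced the new leading term, and the induction you propose cannot be run. Your Cartan-bracket isolation device fails too: if two letters carry the same $\mf h_{\overline 0}$-weight (e.g.\ a repeated letter, or if either is itself a Cartan element, whose weight vanishes), there is no $h$ that separates them.

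The paper sidesteps exactly this obstruction by tracking the $\prec$-leading term (right-to-left lex), which \emph{does} behave monotonely under brackets by large positive $t$-powers, since such brackets increase the rightmost $t$-power. The machinery runs entirely in the $\prec$-world (Lemmata~\ref{lem:sl2theta-twist} and~\ref{lem:arbitrary congruence classes-twist}, producing $G$ with $LT_\prec G$ of any prescribed congruence class) and only converts to a $<$-leading term in the final step, Lemma~\ref{lem:W-technique-twist}: there all $m$ brackets are applied in a single pass, with shifts arranged in a specific descending pattern so that the $\prec$-leading term of $G$ is ``reversed'' into the desired $<$-leading term $M$, and the permutational/non-permutational analysis guarantees no cancellation. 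Without that two-order interplay, the approach you outline cannot be made to close. (Your handling of $\ell$, via only ever bracketing with nonnegative $t$-powers, is fine.)
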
 
This result allows us to reduce ``big'' monomials modulo $I$ to a linear combination of smaller monomials.  
The proof is based on the interplay between the $<$ and $\prec$-orders on standard monomials, generalising the key arguments of \cite{IS} to work in our substantially more commutative context.

\subsection{Preparatory results} \label{ssPREP}
Before proving the main reduction result Proposition~\ref{newcor6}, we will need several preparatory lemmata which will help us control the $\prec$-leading terms of elements of $I$.
If $M = g_1 t^{i_1} \dots g_m t^{i_m}$ is a standard monomial, we say that $(g_1, \dots, g_m)$ is the {\em congruence class} of $M$ and write $M \equiv (g_1, \dots, g_m)$.
We will construct, for any congruence class $\underline g \in (\mc B_{\Delta})^m$, an element $H_{\underline g} \in I$ with $LT_{\prec} H_{\underline g} \equiv \underline g$.
We will see that the existence of these elements is sufficient to prove Proposition~\ref{newcor6}.

Constructing the $H_{\underline g}$ will take several steps.
We first show that $H_{\underline g}$ exists for $\underline g = (g_\theta, \dots, g_\theta)$ or $(g_{-\theta}, \dots, g_{-\theta})$; and further, we can assume, in this case, that  all monomials of $H_{\underline g}$ have the same congruence class.

\begin{lemma}\label{lem:sl2theta-twist}
Let $ 0 \neq F \in S^m(\mf g[t,t^{-1}]^\sigma)$, and let
$I$ be the $(t\mf g[t])^\sigma$-subrepresentation of $S^m(\mathfrak{g}[t, t^{-1}]^{\sigma})$ generated by  $F$.
\begin{enumerate}
\item[$(1)$] If $\{g_{\overline{s}, \alpha} t^{rp+s}, F\}=0$ 
for all  $p \in \ZZ_{\geq 1}$ and all $(\overline s, \alpha) \in \Delta^+$
then 
$F \in S^m(g_\theta[t, t^{-1}])$: that is, 
$F$ is a linear combination of monomials of the form ${g}_{\theta}t^{k_1}\cdots {g}_{\theta}t^{k_m}$ for some $k_1 \leq \cdots \leq k_m$.
\item[$(2)$] If $\{g_{\overline{s}, \alpha} t^{rp+s}, F\}=0$ 
for all  $p \in \ZZ_{\geq 1}$ and all $(\overline s, \alpha) \in \Delta^-$
then 
$F \in S^m(g_{-\theta}[t, t^{-1}])$: that is, 
$F$ is a linear combination of monomials of the form ${g}_{-\theta}t^{k_1}\cdots {g}_{-\theta}t^{k_m}$ for some $k_1 \leq \cdots \leq k_m$.
\item[$(3)$] 
There are nonzero $G \in I \cap S^m(g_\theta[t,t^{-1}])$ and $G' \in I \cap S^m(g_{-\theta}[t,t^{-1}]) $.
\end{enumerate}
\end{lemma}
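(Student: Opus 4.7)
The plan is to reduce (3) to (1) and (2) by a weight argument, then to attack (1) directly by a leading-term contradiction; (2) is formally dual, swapping $g_\theta\leftrightarrow g_{-\theta}$ and $\Delta^+\leftrightarrow\Delta^-$. For (3), the $\mf h_{\overline 0}$-weights of monomials in $S^m(\mf g[t,t^{-1}]^\sigma)$ are sums of at most $m$ root weights, so the nonzero subrepresentation $I$ has a maximum $\mf h_{\overline 0}$-weight $\lambda_{\max}$ attained by some nonzero $G\in I$. For any $(\overline s,\alpha)\in\Delta^+$ and $p\geq 1$, the element $\{g_{\overline s,\alpha}t^{rp+s},G\}$ lies in $I$ and has weight $\lambda_{\max}+\alpha$ strictly exceeding $\lambda_{\max}$, so it must vanish. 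Hence $G$ satisfies the hypothesis of (1), placing $G\in I\cap S^m(g_\theta[t,t^{-1}])$; dually a minimal-weight vector in $I$ yields $G'$ via (2).

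For (1), first reduce to $F$ being $t$-degree homogeneous: $\{g_{\overline s,\alpha}t^{rp+s},-\}$ shifts $t$-degree by $rp+s$, so the hypothesis decouples across $t$-degrees. Next observe that $[g_{\overline s,\alpha},g_\theta]=0$ in $\mf g$ for every $(\overline s,\alpha)\in\Delta^+$: since $\theta+\gamma\notin R$ for any positive root $\gamma$ of $\mf g$, $[e_\gamma,e_\theta]=0$, and by $\sigma$-equivariance the same holds for each $\mc B$-basis element indexed by $\Delta^+$. Consequently the Poisson bracket of any positive-root element with a monomial built purely from $g_\theta$'s vanishes, so the projection of $F$ onto monomials having at least one factor different from $g_\theta$ still satisfies the hypothesis. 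Replace $F$ by this projection; the aim is now to prove $F=0$.

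Argue by contradiction. Let $N=h_1 t^{j_1}\cdots h_m t^{j_m}$ (in normal form) be the $\prec$-leading monomial of the reduced $F$, and let $k^*$ be the largest index with $h_{k^*}\neq g_\theta$. According to whether $h_{k^*}$ is a Cartan basis element, a positive-root element distinct from $g_\theta$, or a negative-root element, I would combine properties (5) and (7) of \S\ref{ssBASICS} to select $g_{\overline s,\alpha}$ with $(\overline s,\alpha)\in\Delta^+$ such that $[g_{\overline s,\alpha},h_{k^*}]\neq 0$ and with its $\mc B$-leading term maximised via \eqref{orderC}. For $p\gg 0$ I compute $\{g_{\overline s,\alpha}t^{rp+s},F\}$ by the Poisson derivation rule: each summand $[g_{\overline s,\alpha},h'_k]\,t^{j'_k+rp+s}\cdot(M/h'_k t^{j'_k})$ has its new factor at a very large $t$-power, so it fills the last slot in normal order. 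The goal is to show that the $(N,k^*)$-contribution is the unique $\prec$-leading term of the result and hence does not cancel.

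The main obstacle is controlling this cancellation: I need to show that no other pair $(M,k)$ with $M\prec N$, or $M=N$ and $k<k^*$, produces a contribution matching the $(N,k^*)$-term. The key leverage is \eqref{orderB} — strict $\mc B$-order between letters is preserved under bracket with a fixed weight-homogeneous element — which, together with the $\prec$-maximality of $N$ and the maximality of $[g_{\overline s,\alpha},h_{k^*}]$ in $\mc B$-order, should force every competing contribution to be strictly $\prec$-smaller after a case analysis (handling whether $k^*=m$, whether the $t$-powers at position $k^*$ and later positions coincide, and how lower monomials of $F$ compare). Once uniqueness is established, $\{g_{\overline s,\alpha}t^{rp+s},F\}\neq 0$ for some $p\geq 1$, contradicting the hypothesis.
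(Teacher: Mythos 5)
Your plan for part (3) is close in spirit to the paper's, but both of your sketched arguments contain gaps; the one in the argument for (1)/(2) is genuine and would need a different idea to fix.

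For (3), the paper also runs an extremal argument, but via height rather than weight, and crucially by \emph{iterating} the operators $\{g_{\overline s,\alpha}t^{rp+s},-\}$ until the result vanishes. Your version tries to avoid the iteration by picking a ``maximum weight'' element of $I$, but this presupposes that $I$ decomposes into $\mf h_{\overline 0}$-weight spaces, which is not automatic: $I$ is only a $(t\mf g[t])^\sigma$-submodule, and $\mf h_{\overline 0} t^0 \not\subseteq (t\mf g[t])^\sigma$, so $I$ need not be $\ad\mf h_{\overline 0}$-stable unless $F$ is already weight-homogeneous (which you cannot assume, since the weight component of $F$ need not lie in $I$). The paper sidesteps this: start with $F$, and if some $\{g_{\overline s,\alpha}t^{rp+s},F\}\neq 0$, replace $F$ by it and repeat; each step strictly increases the height (equivalently, the pairing of every weight component with a fixed regular dominant coweight), which is bounded on $S^m$, so the process terminates at a nonzero element satisfying the hypothesis of (1). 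The weight picture is fine, but the iteration is needed, not an a priori extremal element of $I$.

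For (1) and (2) your approach diverges substantially from the paper's and has a gap that I do not think can be repaired by a ``case analysis.'' You want to track the contribution of $(N,k^*)$, where $N=LT_\prec(F)$ and $k^*$ is the rightmost non-$g_\theta$ slot of $N$, and to show it is the $\prec$-leading term of $\{g_{\overline s,\alpha}t^{rp+s},F\}$. That claim is false. Take $r=1$, $\mf g=\mf{sl}_3$, $m=2$, and $F=c_1(e_{\alpha_1}t^0)(e_\theta t^1)+c_2(e_\theta t^0)(e_{\alpha_1}t^1)$. Since $e_{\alpha_1}t^1 < e_\theta t^1$, we have $LT_\prec F = N = (e_{\alpha_1}t^0)(e_\theta t^1)$ and $k^*=1$, so your target contribution lands at $t$-power $0+p$. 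But the other monomial, which is $\prec$-smaller, has its non-$g_\theta$ slot at $t$-power $1$, and contributes a term whose large $t$-power is $1+p$: this term is strictly $\prec$-larger than the contribution you are tracking. So the $\prec$-leading term of the bracket does not arise from $LT_\prec(F)$, and your noncancellation argument for the $(N,k^*)$-term loses its footing. In contrast, the paper does not track a leading term at all: it chooses, for each $(\overline s,\alpha)$, a single large exponent $q$ (bigger than the sum of $|t$-powers$|$ appearing anywhere in $F$) so that the large slot $j_k^M+rq+s$ in the monomial produced by acting at position $k$ of $M$ is so far separated from the other exponents that the contributions from distinct monomials $M$ of $F$ cannot coincide. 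From $\{g_{\overline s,\alpha}t^{rq+s},F\}=0$ one then reads off $[g_{\overline s,\alpha},g_k^M]=0$ for every letter $g_k^M$ of every monomial $M$ of $F$, hence $[\mf n^+,g_k^M]=0$ and $g_k^M=g_\theta$. If you want to keep a leading-term framework you would need to replace $N$ by the pair $(M,k)$ maximising the $t$-power $j_k^M$ among positions with $h_k^M\neq g_\theta$ (and then break ties in $\mc B$-order), not by $LT_\prec(F)$; but at that point you are essentially reproducing the separation-of-exponents idea anyway, and the paper's direct argument is cleaner.
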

\begin{proof}
   $(1)$  Let $(\overline s, \alpha) \in \Delta^+$.
   Let us take $q \in \ZZ_{\geq 1}$ such that $\overline q =\overline s$ and so that
   if $g_1 t^{j_1} \cdots g_m t^{j_m}$ is a monomial in $F$, then $q > |j_1| + \cdots + |j_m|$.
    Thus, if $M = g_1 t^{j_1}\cdots g_m t^{j_m}$ is a monomial appearing in $F$, then 
    \begin{multline*}\{{g}_{\overline s, \alpha} t^{rq+s}, M\}=g_2 t^{j_2}\cdots g_m t^{j_m}[{g}_{\overline s, \alpha}, g_1]t^{j_1+q}+g_1 t^{j_1}g_3 t^{j_3}\cdots g_m t^{j_m}[{g}_{\overline s, \alpha}, g_2]t^{j_2+q}+\cdots\\
   + g_1 t^{j_1}\cdots g_{v-1}t^{j_{v-1}}[{g}_{\overline s, \alpha}, g_m]t^{j_m+q}.
    \end{multline*}
    Note that  each of the monomials above are written in normal order. 
    Since we have taken $q$ to be large enough, none of those monomials will appear in expressions coming from the action of ${g}_{\overline s, \alpha}t^{q}$ on any  monomial $M' \neq M$ occurring in $F$. 
    Hence by hypothesis the Lie brackets  $[{g}_{\overline s, \alpha}, g_i]$ are all zero.
    This is true for all  $(\overline s, \alpha) \in \Delta^+$ which implies that $[\mf{n}^{+},g_i]=0$ for $1 \leq i \leq m$.
    Hence $g_1=\cdots=g_m=g_\theta$, which spans the centre of $\mf n^+$.
    The proof of $(2)$ is similar.
    
    To prove $(3)$, we first construct $0 \neq G \in I$ so that
    \beq\label{eq:G}
    \{ g_{\overline s, \alpha} t^{rp+s}, G \} = 0 \mbox{ for all $p \in \ZZ_{\geq 1}$ and $(\overline s, \alpha) \in \Delta^+$.}
    \eeq
If $\eqref{eq:G}$ holds for $G=F$, we are done.
Otherwise  there exist $a \in \ZZ_{\geq 1}$ and $(\overline u, \beta) \in \Delta^+$ such that $F_1=\{g_{\overline u,\beta}t^{ra+u},F\}\neq 0$.
   If \eqref{eq:G} holds for $G=F_1$, we are done; else
  there exist $b \in \ZZ_{\geq 1}$ and $(\overline v, \gamma) \in \Delta^+$ such that $F_2=\{g_{\overline v,\gamma}t^{rb+v},F_1\}\neq 0$.
    As $\{g_{\overline s, \alpha}, - \}$ increases the height of monomials and the height of a monomial in $S^m(\mf g[t, t^{-1}]^{\sigma})$ is at most $m \height \theta$, this process must terminate.
    Thus, continuing this procedure repeatedly, we will reach  a nonzero element $G$ of $I$ such that 
    \eqref{eq:G} holds. 
    
    By symmetry there is $0 \neq G' \in I$ such that 
    $\{g_{\overline{s}, \alpha} t^{rp+s}, G'\} = 0$ for all 
    $p \in \ZZ_{\geq 1}$ and $(\overline s, \alpha) \in \Delta^-$.
  The two parts of  $(3)$ then follow by applying $(1)$ and $(2)$.
\end{proof}

Enumerate  $\Delta^+ = \{(\overline \xi_1,\beta_1), \cdots, (\overline \xi_{\ell},\beta_\ell)\}$ where $0 \leq \xi_i \leq r-1$. 
Then the set $\{{g}_{\overline \xi_1, \beta_1}, \dots, {g}_{\overline \xi_{\ell},\beta_\ell} \}$ forms a basis of $\mf{n}^{+}$ and the set $\{{g}_{\overline{- \xi_1},-\beta_1}, \dots, {g}_{\overline{- \xi_{\ell}},-\beta_\ell}\}$ forms a basis of $\mf{n}^{-}$. 
Recall that if $g \in \mf g_{\overline s}$, then we write $\overline s = | g |$.

We now prove the existence of  $H_{\underline g}$  with $LT_\prec H_{\underline g} \equiv \underline g$ for any $\underline g \in (\mc B_{\Delta})^m$.

\begin{lemma}\label{lem:arbitrary congruence classes-twist}
Let $I$ be the $(t \mf g[t])^\sigma$-subrepresentation of $S^m(\mathfrak{g}[t, t^{-1}]^{\sigma})$ generated by an element $F \neq 0$.
Let  $(g_1,\cdots, g_m) \in (\mc B_{\Delta})^m$.
Then there exists $G$ in $I$ such that  $LT_{\prec}(G) \equiv (g_1,\cdots,g_m)$.
\end{lemma}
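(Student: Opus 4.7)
I would prove the lemma by starting with an element of $I$ whose monomials all have the congruence class $(g_\theta,\dots,g_\theta)$ (guaranteed by Lemma~\ref{lem:sl2theta-twist}(3)), and transforming its $\prec$-leading monomial into any desired congruence class via a sequence of adjoint actions by elements of $(t\mf g[t])^\sigma$ of the form $\ad(yt^a) = \{yt^a,-\}$, where $y \in \mf g$ is weight-homogeneous and $a \in \ZZ_{\geq 1}$ is large with $\overline{a} = |y|$.

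First, fix $G_0 \in I \cap S^m(g_\theta[t,t^{-1}])$ via Lemma~\ref{lem:sl2theta-twist}(3); every monomial of $G_0$ is of the form $g_\theta t^{k_1}\cdots g_\theta t^{k_m}$, so every monomial has congruence class $(g_\theta,\dots,g_\theta)$. The key technical tool is a \emph{one-step modification lemma}: if $F\in I$ has $LT_\prec F = h_1 t^{i_1}\cdots h_m t^{i_m}$ in normal order, and $y\in\mf g$ is weight-homogeneous with $[y,h_m]\neq 0$, then for all sufficiently large $a$ with $\overline a = |y|$,
\[
LT_\prec \{yt^a,F\} \;=\; c\,h_1 t^{i_1}\cdots h_{m-1}t^{i_{m-1}}\,[y,h_m]\,t^{i_m+a}
\]
for some $c\neq 0$. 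This holds because $\ad(yt^a)$ acts as a derivation producing $m$ terms on each standard monomial; for $a$ large, each term's new letter $[y,h_j]t^{i_j+a}$ is pushed into the rightmost slot after renormalization, and the right-to-left lex nature of $\prec$ picks out $j=m$ since $i_m+a$ is maximal. Smaller $\prec$-terms of $F$ contribute strictly smaller monomials to $\{yt^a,F\}$ and so do not interfere.

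Combined with property~(5) of \S\ref{ssBASICS}, which writes any element of $\mc B_\Delta$ as a nonzero multiple of $[y_k,[\cdots,[y_1,g_{\pm\theta}]\cdots]]$ for weight-homogeneous $y_1,\dots,y_k \in \mf g$ in the appropriate root spaces, iterating the one-step lemma replaces the last letter of the $\prec$-leading monomial with any desired $g_m\in\mc B_\Delta$. To then set positions $m-1,m-2,\dots,1$ in turn, I would apply further adjoint actions $\ad(yt^a)$ in which $y$ is chosen so that $[y,h]=0$ for each already-placed target letter $h$ at positions $>j$, while $[y,g_\theta]\neq 0$, so that only the desired slot is disturbed; with $a$ calibrated between the modest $t$-powers at positions $\leq j$ and the large $t$-powers at positions $>j$, the intended derivation term at position $j$ dominates in $\prec$.

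The \emph{main obstacle} is verifying that at each stage such a $y$ with $[y,g_\theta]\neq 0$ (in the direction prescribed by property~(5)) and $[y,h]=0$ for each previously placed target can be found; this reduces to a weight-combinatorial case analysis in $\mf g$, using that the centraliser in $\mf g$ of a collection of root vectors contains many weight-homogeneous elements. When direct commutation fails, one exploits the fact that a ``wrong'' derivation term at a position with large $t$-power gets pushed into the rightmost slot, and by careful choice of $a$ (in particular so that $a$ is large compared to the small $t$-powers but not so large as to overwhelm the accumulated shifts on the already-placed letters) the intended contribution still wins in $\prec$. Iterating through all $m$ positions produces the required $G \in I$ with $LT_\prec G \equiv (g_1,\dots,g_m)$.
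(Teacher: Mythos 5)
Your high-level strategy (start from $I\cap S^m(g_\theta[t,t^{-1}])$ via Lemma~\ref{lem:sl2theta-twist}(3), then use property~(5) and carefully chosen adjoint actions with large $t$-powers to steer the $\prec$-leading term) is the same as the paper's, and your one-step modification lemma is essentially a special case of the analysis in Lemma~\ref{lem:W-technique-twist}. However, there is a genuine gap in the iteration, and you in fact flag it yourself as the ``main obstacle.''

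You fill positions from right to left: after position $m$ has been set to an arbitrary target $g_m\in\mc B_\Delta$, you set position $m-1$, etc. At the stage where you set position $j$, you need to apply $\ad(yt^{a'})$ for $y$ in the specific root spaces prescribed by property~(5), and you claim either (i) one can always choose such $y$ to commute with all already-placed letters $g_{j+1},\dots,g_m$, or (ii) a ``careful choice of $a'$'' makes the unwanted contributions lose in $\prec$. Neither works. For (i): property~(5) forces $y$ into $\mf n^-$ (say, if $g_j\in\mf n^+$), and $[\mf n^-,g_m]\neq 0$ for generic $g_m$; if, e.g., $g_m=g_\theta$, there is no nonzero $y\in\mf n^-$ with $[y,g_\theta]=0$, since $\theta$ is the highest root. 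For (ii): at stage $j$, the already-placed positions $j+1,\dots,m$ have been given huge $t$-powers by the earlier steps. If $[y,g_m]\neq 0$, the derivation term produced by $y t^{a'}$ hitting position $m$ has rightmost $t$-power $i_m+a'$, which exceeds the rightmost $t$-power $i_m$ of the intended term (the one hitting position $j$). Since $\prec$ compares right to left, the unwanted term is strictly $\prec$-larger, so the intended monomial is \emph{not} the $\prec$-leading term, no matter how $a'$ is calibrated. This is the opposite of what you claim.

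The paper avoids this by filling positions from \emph{left} to \emph{right}: at stage $n$ only positions $1,\dots,n$ are final, and positions $n+1,\dots,m$ are kept equal to $g_\theta$ (or $g_{-\theta}$) with large, widely-spaced $t$-powers. The adjoint actions prescribed by property~(5) to achieve $g_{n+1}$ then lie in $\mf n^\pm$, which commutes with $g_{\pm\theta}$, so positions $n+2,\dots,m$ are never disturbed; and positions $1,\dots,n$ carry strictly smaller $t$-powers, so actions on them cannot produce the $\prec$-leading term. The paper also requires ``reset'' passes (Steps $1^\pm$, $2^\pm$) converting the unfilled positions between $g_\theta$ and $g_{-\theta}$ depending on whether $g_{n+1}\in\mf n^-$ or $\mf n^+$, precisely because the commutation $[\mf n^\pm,g_{\pm\theta}]=0$ must match the sign of the root vectors being applied. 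This sign-management is another concrete step your proposal does not address. To fix your argument you would essentially have to reverse the direction of the induction and reproduce these reset passes.
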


\begin{proof}
Let $d = \height(\theta)>0$.  Write $\mf{g}_{\theta}=\mf{g}_{\overline s, \alpha}$, which implies that $\mf{g}_{-\theta}=\mf{g}_{\overline{- s}, -\alpha}$ i.e. $|\mf{g}_{\theta}|=-|\mf{g}_{-\theta}|$ (mod $r$). 
We prove by induction that for all $n \in \{0, \dots, m\}$ there is $G(n) \in I$ with the following properties:
\begin{enumerate}
    \item[(1)] $LT_\prec(G(n)) = g_1 t^{k_1} \dots g_n t^{k_n} g_\theta t^{k_{n+1}} \dots g_\theta t^{k_m}$ for some $k_1 \leq \cdots \leq k_m \in \ZZ$;
    \item[(2)] $k_i + 2rd \leq k_{i+1}$ for all $i>n$;  
    \item[(3)] $k_i < k_{i+1}$ for all $i \leq n$.  
\end{enumerate}
Then $G=G(m)$ is the element we seek.

To construct $G(0)$ we use Lemma~\ref{lem:sl2theta-twist}.  
By that result, there is a nonzero $H \in I \cap S^m(\mf g_{-\theta}[t,t^{-1}])$.
Write $LT_\prec H = g_{-\theta} t^{j_1} \dots g_{-\theta} t^{j_m}$ where $\overline j_1 = \cdots = \overline j_m = |\mf{g}_{-\theta}|$.\\
Let $G(0) = (\ad g_\theta t^{rd+|\mf{g}_{\theta}|})^2(\ad g_\theta t^{{2rd+|\mf{g}_{\theta}|}})^2\dots (\ad g_\theta t^{mrd+|\mf{g}_{\theta}|})^2(H)$.
The monomials in $G(0)$ are obtained from monomials in $H$ by applying exactly two $g_\theta t^i$ to each $g_{-\theta} t^{j_k}$. 
(This is because $(\ad g_\theta)^3 = 0$.)
The $\prec$-leading term of $G(0)$ is obtained from $LT_\prec(H)$ by applying the highest powers of $t$ to the rightmost letters.
Thus 
$ LT_\prec(G(0)) = g_\theta t^{k_1} \dots g_\theta t^{k_m}$
where $k_i = j_i + 2(ird+|\mf{g}_{\theta}|)$.  
It is immediate that this has the claimed properties.

Now assume we have constructed $G(n)$ as claimed for some $1\leq n \leq m-1$; we construct $G(n+1)$.  
There are two cases, depending on whether $g_{n+1}$ lies in  a positive or negative root space.

\smallskip
\noindent{\bf Negative case:}  $g_{n+1} \in \mf n^-$.

{\bf Step $1^-$: } let $H_1^- = (\ad g_{-\theta} t^{r-|\mf{g}_{\theta}|})^{2(m-n)}(G(n))$ and let 
\[N_1^- = g_1 t^{k_1} \dots g_n t^{k_n} g_{-\theta} t^{k_{n+1}+2(r-|\mf{g}_{\theta}|)}\dots g_{-\theta} t^{k_m+2(r-|\mf{g}_{\theta}|)}.\]
We claim that $N_1^- = LT_\prec(H_1^-)$.
This is because the $\prec$-leading term of $H_1^-$ is obtained from $LT_\prec(G(n))$ by increasing the right-most powers of $t$ as much as possible, and this clearly gives  $N_1^-$.

{\bf Step $2^-$:} Let $H_2^- = (\ad g_\theta t^{r+|\mf{g}_{\theta}|})^{2(m-n-1)}(H_1^-)$ and  let 
\[N_2^- = g_1 t^{k_1} \dots g_n t^{k_n} g_{-\theta} t^{k_{n+1}+2(r-|\mf{g}_{\theta}|)} g_\theta t^{k_{n+2}+4r} \dots g_{\theta} t^{k_m+4r}.\]
We claim that $N_2^- = LT_\prec(H_2^-)$.

First, we show that $N_2^- = LT_\prec(\ad g_{-\theta} t^{r - |g_\theta|})^{2(m-n})(N_1^-)$.
This is similar to the previous paragraph:  again, the $\prec$-leading term is obtained by increasing the right-most powers of $t$ as much as possible, giving $N_2^-$.

We now show that $N_2^- = LT_\prec(H_2^-)$.
Let $N_2' \neq N_2^-$ be any other monomial of $H_2^-$.
Suppose that $N_2' \in \mf g  t^{k_1} \dots \mf g t^{k_n} \mf g t^{k_{n+1}+2(r-|\mf{g}_{\theta}|)}\mf g t^{k_{n+2}+4r} \dots \mf g  t^{k_m+4r}$ --- that is, $N_2'$ involves the same $t$-powers as $N_2^-$.  
(If this is not the case, it is clear that $N_2' \prec N_2^-$.)
By construction, there must be a monomial   $M = h_1 t^{k_1} \dots h_m t^{k_m}$ of $G(n)$ so that $N_2'$ is a monomial in 
$(\ad g_\theta t^{r-|\mf{g}_{-\theta}|})^{2(m-n-1)}( \ad g_{-\theta} t^{r-|\mf{g}_{\theta}|})^{2(m-n)}(M)$; that is, $N_2'$ is obtained from a monomial in $G(n)$ with the same $t$-powers as $LT_\prec G(n)$.
Further, $N_2'$ must have been obtained from $M$ by applying $g_{-\theta} t^{r-|\mf{g}_{\theta}|}$ twice to  each letter of $M$ in the $n+1$'st place or further right, and $g_{\theta } t^{r+|\mf{g}_{\theta}|}$ twice to each of the letters in the $n+2$'nd place and further right; any other combination of applying $g_{-\theta} t^{r-|\mf{g}_{\theta}|} $ and $g_\theta t^{r+|\mf{g}_{\theta}|}$ would give either the wrong $t$-powers or a zero result.
The only possibility to obtain a nonzero result is if $M = h_1 t^{k_1} \dots h_n t^{k_n} g_\theta t^{k_{n+1} } \dots g_\theta t^{k_m}$, 
as we needed $(\ad g_{-\theta}t^{r-|\mf{g}_{\theta}|})^2$ to be nonzero on the $n+1$'st and greater letters of $M$.
Thus we have $N_2' = h_1  t^{k_1} \dots h_n t^{k_n} g_{-\theta} t^{k_{n+1}+2(r-|\mf{g}_{\theta}|)} g_\theta t^{k_{n+2}+4r} \dots g_\theta  t^{k_m+4r}$, where necessarily 
$h_1  t^{k_1} \dots h_n t^{k_n}\prec g_1  t^{k_1} \dots g_n t^{k_n}$.
So $N_2' \prec N_2^-$.

{\bf Step $3^-$:}  As $\theta$ is the longest root of $\mf g$, by Fact (5) there are 
$p \in \ZZ_{\geq 0}$ and $(\overline{s_1},\eta_1), \dots, (\overline{s_p}, \eta_p) \in \Delta^+$
so that $g_{n+1}$ is a nonzero multiple of 
\[ [g_{\overline s_1,\eta_1}, [\dots, [ g_{\overline s_p, \eta_p}, g_{-\theta}]\dots]].\]
Let us denote $H_3^- = \{g_{\overline s_1,\eta_1}t^{s_1}, \{\dots, \{ g_{\overline s_p,\eta_p}t^{s_p}, H_2^-\}\dots\}\}$ and $S=\sum_{i=1}^p s_i$ and let
\[N_3^- = g_1 t^{k_1} \dots g_n t^{k_n} g_{n+1} t^{k_{n+1}+2(r-|\mf{g}_{\theta}|)+S} g_\theta t^{k_{n+2}+4r} \dots g_\theta t^{k_m+4r}.\]
We claim that $N_3^- = LT_\prec(H_3^-)$.
First, $LT_\prec(\{g_{\overline s_1,\eta_1}t^{s_1}, \{\dots, \{ g_{\overline s_p,\eta_p}t^{s_p}, N_2^-\}\dots\}\})$ is obtained from $N_2^-$ by applying the elements $g_{\overline s_i, \eta_i}t^{s_i}$ as far to the right as possible.  
As $[g_{\overline s_i, \eta_i}, g_\theta] = 0$ for all $i$ (this is because $\theta$ is the highest root of $\mf g$), applying the $g_{\overline s_i,\eta_i}t^{s_i}$ as far to the right as possible means applying them to the $(n+1)$'st letter $g_{-\theta} t^{k_{n+1}+2(r-|\mf{g}_{\theta}|)}$ to obtain $N_3^-$. 
In other words, 
\[N_3^- = LT_\prec(\{g_{\overline s_1,\eta_1}t^{s_1}, \{\dots, \{ g_{\overline s_p,\eta_p}t^{s_p}, N_2^-\}\dots\}\}).\]

Now let $N'_2 \neq N_2^-$ be any other monomial of $H_2^-$ and let $H_3' = \{g_{\overline s_1,\eta_1}t^{s_1}, \{\dots, \{ g_{\overline s_p,\eta_p}t^{s_p}, N'_2\}\dots\}\}$. 
If $N'_2 \not \in \mf g t^{k_1} \dots \mf g t^{k_n} \mf g t^{k_{n+1}+2(r-|\mf{g}_{\theta}|)} \mf g t^{k_{n+2}+4r} \dots \mf g t^{k_m+4r}$, then  we must have $LT_\prec H'_3 \prec N_3^-$ by comparing $t$-powers.  

Suppose now that $N_2'$ involves the same $t$-powers as $N_2^-$: that is, \[N'_2 \in \mf g t^{k_1} \dots \mf g t^{k_n} \mf g t^{k_{n+1}+2(r-|\mf{g}_{\theta}|)} \mf g t^{k_{n+2}+4r} \dots \mf g t^{k_m+4r}.\]
We have seen in the proof of Step $2^-$ that we must have \[N_2' = h_1  t^{k_1} \dots h_n t^{k_n} g_{-\theta} t^{k_{n+1}+2(r-|\mf{g}_{\theta}|)} g_\theta t^{k_{n+2}+4r} \dots g_\theta  t^{k_m+4r}\] for some $h_1,\dots, h_n \in \mc B$.   
Then $LT_\prec(H_3') = h_1  t^{k_1} \dots h_n t^{k_n} g_{n+1} t^{k_{n+1}+2(r-|\mf{g}_{\theta}|)+S} g_\theta t^{k_{n+2}+4r} \dots g_\theta  t^{k_m+4r}$.   
But as $N_2' \prec N_2^-$, thus $h_1  t^{k_1} \dots h_n t^{k_n}\prec g_1  t^{k_1} \dots g_n t^{k_n}$:  that is, $LT_\prec(H_3') \prec N_3^-$.  
Thus $N_3^- = LT_\prec H_3^-$, as claimed.

Let $G(n+1) = H_3^-$.  We have shown that $(1)$ is satisfied; but the $t$-powers in $N_3^-$ also  satisfy $(2)$ and $(3)$.

\smallskip

\noindent{\bf Positive case: } $g_{n+1} \in \mf n^+$.

{\bf Step $1^+$:} Let $H_1^+ = (\ad g_{-\theta} t^{r-|\mf{g}_{\theta}|})^{2(m-n-1)}(G(n))$ and let 
\[ N_1^+ = g_1 t^{k_1} \dots g_n t^{k_n} g_\theta t^{k_{n+1}} g_{-\theta} t^{k_{n+2}+2(r-|\mf{g}_{\theta}|)} \dots g_{-\theta} t^{k_m+2(r-|\mf{g}_{\theta}|)}.\]
We claim that $N_1^+ = LT_\prec H_1^+$.
This is because the $\prec$-leading term of $H_1^+$ must come from the $\prec$-leading term of $G(n)$ by increasing the rightmost $t$-powers as much as possible, which gives $N_1^+$.

{\bf Step $2^+$:} As $\theta$ is the longest root of $\mf g$, by Fact (5) there are 
$q \in \ZZ_{\geq 0}$ and $(\overline{s_1}, \gamma_1), \dots, (\overline{s_q}, \gamma_q) \in \Delta^-$
so that $g_{n+1}$ is a nonzero multiple of 
\[ [g_{\overline s_1,\gamma_1}, [\dots, [ g_{\overline s_q,\gamma_q}, g_{\theta}]\dots]].\]
Let 
$H_2^+ = \{g_{\overline s_1,\gamma_1}t^{r+s_1}, \{\dots, \{ g_{\overline s_q, \gamma_q}t^{r+s_
q}, H_1^+\}\dots\}\}$.
Let $S = qr+\sum_{i=1}^q s_i$ 
and let
\[N_2^+ = g_1 t^{k_1} \dots g_n t^{k_n} g_{n+1} t^{k_{n+1}+S} g_{-\theta} t^{k_{n+2}+2(r-|\mf{g}_{\theta}|)} \dots g_{-\theta} t^{k_m+2(r-|\mf{g}_{\theta}|)}.\]
We claim that $N_2^+ = LT_\prec(H_2^+)$.
This is because, again, the $\prec$-leading term of $H_2^+$ comes by increasing the rightmost $t$-powers as much as possible.
As $[g_{\overline s_i, \gamma_i}, g_{-\theta}] = 0$ for all $i$, this is done by acting on $g_{\theta} t^{k_{n+1}}$ with all of the $g_{\overline s_i,\gamma_i} t^{r+s_i}$ to obtain $N_2^+$. 

Note that $q\leq d$ and each $s_i \leq r$.
Thus $k_{n+1}+S< k_{n+1}+2rd \leq k_{n+2} < k_{n+2} + 2(r-|\mf{g}_{\theta}|)$, using the induction hypothesis.
Therefore $N_2^+$ is a standard monomial.

{\bf Step $3^+$:}
Let $H_3^+ = (\ad g_{\theta}t^{r+|\mf{g}_{\theta}|})^{2(m-n-1)}(H_2^+)$ and let 
\[ N_3^+ = g_1 t^{k_1} \dots g_n t^{k_n} g_{n+1} t^{k_{n+1}+S} g_\theta t^{k_{n+2}+4r} \dots g_\theta t^{k_m+4r}.\]
We claim that $N_3^+ = LT_\prec(H_3^+)$.
This is because the $\prec$-leading term of $H_3^+$ comes from applying $\ad g_\theta t^{r+|g_\theta|}$ as far to the right as possible:
that is, applying $g_\theta t^{r+|\mf{g}_{\theta}|}$ twice to each $g_{-\theta} t^{k_i+2(r-|\mf{g}_{\theta}|)}$, where $i \in \{n+2, \dots, m\}$.

Let $G(n+1) = H_3^+$.
We have verified that $LT_\prec G(n+1)$ has the claimed congruence class, so $(1)$ is satisfied.  
The observation at the end of Step $2^+$ ensures that $(3)$ holds; and $(2)$ follows as $k_{i+1}+4r - (k_i +4r) = k_{i+1}-k_i$.
\end{proof}

Let $I$ be a nontrivial $\mf g[t]^\sigma$-subrepresentation of $S^m(\mf g[t, t^{-1}]^\sigma)$.
We have seen that for any $\underline g \in (\mc B_{\Delta})^m$ there is $H_{\underline g} \in I$ with $LT_\prec H_{\underline g} \equiv \underline g$.
We will use this to construct elements of $I$ with arbitrary $<$-leading term, as long as all $t$-powers involved are sufficiently large.

The next result considers which $<$-leading terms may be obtained from a particular $H_{\underline g}$.  
It is modelled on the methods of \cite{IS}.

\begin{lemma}\label{lem:W-technique-twist}
Let $(g_1, \dots, g_m) \in (\mc B_{\Delta})^m$.  
Let $G \in S^m(\mathfrak{g}[t,t^{-1}]^{\sigma})$  with $LT_{\prec}G=g_1 t^{a_1}\cdots g_mt^{a_m}$. 
Let $I$ be the $(t \mathfrak{g}[t])^\sigma$-subrepresentation of $S^m(\mathfrak{g}[t,t^{-1}]^\sigma)$  generated by $G$. 
Let $s$ be the smallest power of $t$ occuring in $G$.

Suppose that $(g_1', \dots, g_m') \in \mc B^m$ so that
\beq\label{magic}
\mbox{for all $j$ there is weight-homogeneous $ g_j'' \in \mf g$ so that $[g_j'',g_j]$ is a nonzero scalar multiple of $g'_{m+1-j}$.}
\eeq
Then for all standard monomials
 $M=g'_1 t^{i_1}\cdots g'_m t^{i_m}$ 
with $i_1 > \max(0, 2a_m-s)$,
there is $H_M \in I$ with $LT_{<}H_M= M$.
\end{lemma}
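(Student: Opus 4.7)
The plan is to define $H_M$ as the iterated Poisson bracket
\[ H_M := \{g''_1 t^{b_1}, \{g''_2 t^{b_2}, \dots, \{g''_m t^{b_m}, G\}\dots\}\} \in I, \]
with $b_j := i_{m+1-j} - a_j$. First I would verify that each factor $g''_j t^{b_j}$ lies in $(t\mf g[t])^\sigma$. Positivity $b_j \geq 1$ follows from the standardness inequalities $i_1 \leq i_{m+1-j}$ and $a_j \leq a_m$ together with the hypothesis $i_1 > 2a_m - s$: indeed $b_j \geq i_1 - a_m > a_m - s \geq 0$, and as $b_j$ is an integer this gives $b_j \geq 1$. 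The congruence $\overline{b_j} = |g''_j|$ is forced: $[g''_j, g_j] \in \kk^\times g'_{m+1-j}$ gives $|g''_j| + |g_j| = |g'_{m+1-j}|$, while standardness of $LT_\prec G$ and of $M$ give $\overline{a_j} = |g_j|$ and $\overline{i_{m+1-j}} = |g'_{m+1-j}|$.

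Expanding by the Leibniz rule, the \emph{diagonal} contribution---in which each $g''_j$ acts on the $j$-th letter of $LT_\prec G$---replaces $g_j t^{a_j}$ by a nonzero scalar multiple of $g'_{m+1-j} t^{a_j + b_j} = g'_{m+1-j} t^{i_{m+1-j}}$ and so yields a nonzero scalar multiple of $g'_m t^{i_m} g'_{m-1} t^{i_{m-1}} \cdots g'_1 t^{i_1}$, which in the commutative algebra $S^m(\mf g[t,t^{-1}]^\sigma)$ equals $M$. It remains to check that $M$ is the $<$-largest monomial of $H_M$; this is the main obstacle.

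I would argue this as follows. All monomials of $H_M$ have length $m$; monomials arising from a monomial $G'$ of $G$ have degree $\deg G' + \sum_j b_j = \deg G' + \deg M - \deg LT_\prec G$. Since every $G'$ satisfies $\deg G' \leq \deg LT_\prec G$, contributions with $\deg G' < \deg LT_\prec G$ are $<$-below $M$ on degree alone. In the equal-degree case I split into (a) $G' = LT_\prec G$ with a non-identity distribution $\tau\colon\{1,\dots,m\}\to\{1,\dots,m\}$ of the brackets among the letters, and (b) $G' \prec LT_\prec G$. In (a), when $\tau$ is a non-identity permutation, any swap of $j < k$ sends $(a_j+b_j, a_k+b_k) = (i_{m+1-j}, i_{m+1-k})$ to $(a_j+b_k, a_k+b_j)$ whose minimum is $i_{m+1-k} - (a_k - a_j) \leq i_{m+1-k}$; iterating and sorting shows the resulting $t$-power profile lies left-to-right-lex-below $(i_1,\dots,i_m)$, and at any position where the $t$-powers coincide with $M$'s the letter produced via \eqref{orderB} is strictly $\mc B$-smaller, so the resulting standard monomial is $<$-below $M$. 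If $\tau$ has a collision, some letter $g_k t^{a_k}$ of $LT_\prec G$ is unbracketed; since $a_k \leq a_m < i_1$, the leftmost $t$-power of the resulting standard monomial falls below $i_1$, again yielding a $<$-smaller monomial. In (b), $G' \prec LT_\prec G$ means the rightmost letters agree on some suffix while the leftmost differing letter of $G'$ is $\mc B[t,t^{-1}]^\sigma$-smaller; the diagonal contraction on $G'$ preserves this via \eqref{orderB}, while non-diagonal contractions reduce to the analysis of (a). The hypothesis $i_1 > 2a_m - s$ is used throughout to ensure that even the smallest $t$-powers in $G$ (those as low as $s$), after being raised by any $b_j$, cannot reach into the $\{i_k\}$-range in a way that would overturn these left-to-right lex comparisons. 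Assembling the cases gives $LT_< H_M = M$.
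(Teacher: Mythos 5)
Your construction of $H_M$, the observation that it lies in $I$ (including the careful check that each $b_j \geq 1$, which the paper glosses over), the degree comparison eliminating monomials of $G$ of degree $< \deg LT_\prec G$, and the dichotomy between "collision" (non-permutational) and permutational distributions of the brackets are exactly the paper's argument. Two issues deserve flagging, one a genuine error and one a substantial gap in detail.

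The error: your claim that for every non-identity permutation $\tau$ on $LT_\prec G$ the resulting monomial is \emph{strictly} $< M$ is false. If $\tau$ permutes positions $j, k$ with $(a_j, g_j) = (a_k, g_k)$, it reproduces $M$ exactly, with the same scalar. The paper only proves $\eqref{perm-mon} \leq M$, and then computes the coefficient of $M$ as $C\lambda$ with $C = \#\{\tau : a_{\tau(i)} = a_{m+1-i},\ g_{\tau(i)} = g_{m+1-i}\ \forall i\} \geq 1$. Your version arrives at the right conclusion, but only by accident: without checking that the several $\tau$'s producing $M$ all contribute with the same sign, you cannot rule out cancellation.

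The gap: the phrase "iterating and sorting shows the resulting $t$-power profile lies left-to-right-lex-below $(i_1,\dots,i_m)$", and the two-line disposal of case (b) ("the diagonal contraction on $G'$ preserves this\dots while non-diagonal contractions reduce to the analysis of (a)"), are where essentially all the work of the lemma sits, and they are left as assertions. The paper handles every pair (monomial $A$ of $G$ with $\deg A = \deg LT_\prec G$, permutation $\tau$) uniformly by a single left-to-right recursion on $\eqref{perm-mon}$: at each step one has $b_{\tau(k)} \leq a_m$ by normal ordering of $A$ together with $A \preceq LT_\prec G$ in the $\prec$-order, so the $t$-power produced is $\leq i_k$, and when it ties, \eqref{orderB} controls the $\mc B$-letter. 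Your reduction of case (b) to case (a) does not literally apply (case (a) assumed $G' = LT_\prec G$), so completing your sketch would amount to rewriting this uniform recursion. Also, "the leftmost differing letter of $G'$ is smaller" should be "the rightmost differing letter": $\prec$ compares right to left.
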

\begin{proof}

Let $$H_M=\{g_1''t^{i_m-{a_1}},\{g_2'' t^{i_{m-1}-a_2}\cdots, \{g_m''t^{i_1-a_m}, G\}\cdots \}\}.$$
Then $H_M \in I$ because 
each $g_j'' t^{i_{m+1-j}-a_j} \in \mf g[t]^\sigma$: we have 
$\overline a_j =|g_j|$ and $\overline i_j = |g'_j|$ implies that $\overline {i_{m+1-j}-a_j}=|g_j''|$.
We claim that $LT_{<}H_M=M$.

To prove our claim, let us consider the process by which monomials of $H_M$ are produced from monomials of $G$.
Let 
\[A= h_1 t^{b_1} \cdots h_m t^{b_m}\]
be a monomial of $G$.  Then $A$ induces two kinds of monomials of $H_M$:
either we act on each letter of $A$ with exactly one $g_j'' t^{i_{m+1-j}-a_j}$, or we act on some letter of $A$ more than once and in consequence do not act at all on at least one letter of $A$.
We call the first of these actions {\em permutational} and the second {\em non-permutational}.
The $<$-leading term in $H_M$  always comes from a permutational action.
This is because in a non-permutational action, the smallest power of $t$ is always bounded above by $a_m$ since there will be a missing $h_i t^{b_i}$ which is not acted upon and 
$b_i \leq a_m$ by definition of the monomial order $\prec$;
whereas in a permutational action, the lowest power of $t$ is bounded below by $i_1+b_1-a_m$ which is strictly  bigger than $a_m$ because of our assumption that $i_1 > 2a_m-s$. 

Now, if $\sum b_i < \sum a_i$, then all monomials of
\[\{g_1''t^{i_m-{a_1}},\{g_2'' t^{i_{m-1}-a_2}\cdots, \{g_m''t^{i_1-a_m}, A\}\cdots \}\}\]
have degree $< \deg M$ and so
are strictly $< M$.  We may thus assume that $\sum b_i = \sum a_i$.

A monomial coming from a permutational action upon $A$ looks like
\beq \label{perm-mon} [g_m'', h_{\tau(1)}]t^{i_1+b_{\tau(1)}-{a_m}} [g_{m-1}'',h_{\tau(2)}]t^{i_{2}+b_{\tau(2)}-a_{m-1}}\cdots [g_1'',h_{\tau(m)}]t^{i_m+b_{\tau(m)}-a_1}
\eeq
for some $\tau \in \mf S_m$, where all the Lie brackets are nonzero.  
We claim that
\[ \eqref{perm-mon} \leq M,\]
with equality only when $b_{\tau(i)}= m+1-i$ and $h_{\tau(i)} = g_{m+1-i}$ for all $i$.

Note that $b_{\tau(1)} \leq a_m$ thanks to the definition of $\prec$.
We will have several different cases:
\begin{itemize}
\item  If $b_{\tau(1)}<a_m$
 then our claim is obvious because $i_1+b_{\tau(1)}-a_m < i_1$.  
\item  
If $b_{\tau(1)}=a_m$, then we must have $[g_m'', h_{\tau(1)}] \leq g_1'$, with equality only if $h_{\tau(1)}= g_m$.
This is because, by definition of $\prec$,  $h_{\tau(1)} \leq g_m$ and $<$ has property \eqref{orderB}. 
\item if $b_{\tau(1)} = a_m$ and   $h_{\tau(1)}=g_m$, then 
   we want to show that 
   \[g'_1 t^{i_1} [g_{m-1}'',h_{\tau(2)}]t^{i_{2}+b_{\tau(2)}-a_{m-1}}\cdots [g_1'',h_{\tau(m)}]t^{i_m+b_{\tau(m)}-a_1} \leq g'_1 t^{i_1}\cdots g'_m t^{i_m},\]
   with equality only when $b_{\tau(i)}=a_{m+1-i}$ and $h_{\tau(i)} = g_{m+1-i}$ for all $2 \leq i \leq m$.
   This can  be proved by repeating the same procedure and moving towards the right.
    \end{itemize}
    
    This analysis show that $LT_\prec G$ is the only monomial which contributes to the occurrence of $M$ in $H_M$.
    To finish, we note that by \eqref{magic}, then
    \[ [g_m'', g_m]\dots [g_1'', g_1] = \lambda g_1' \dots g_m'\]
    for some $\lambda \neq 0$.
    Thus $M$ occurs in $H_M$ with nonzero coefficient $C\lambda$, where
    \[ C = \#\{ \tau \in \mf S_m : a_{\tau(i)} = a_{m+1-i}  \mbox{ and } g_{\tau(i)} = g_{m+1-i} \mbox{ for all $i$ } \} \neq 0.\]
    Thus $LT_< H_M = M$.
\end{proof}

To use Lemma~\ref{lem:W-technique-twist} we must prove that we can always find a situation where \eqref{magic} holds.  This is given by the next elementary lemma.
\begin{lemma}\label{lem:zero}
For any $g \in \mc B$ there are $g' \in \mc B_{\Delta}$ and weight-homogeneous $g'' \in \mf g$ so that $[g'', g']$ is a nonzero scalar multiple of $g$.
\end{lemma}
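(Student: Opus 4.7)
My plan is to split into two cases depending on whether $g$ is a root vector or one of the Cartan basis elements $h^i_{\overline s}$.

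For the first case, where $g = g_{\overline s, \alpha} \in \mc B_{\Delta}$, I would take $g' = g$ itself and pick any $g'' \in \mf h_{\overline 0}$ with $\alpha(g'') \neq 0$.  Such a $g''$ exists because $\alpha$ is a nonzero linear functional on $\mf h_{\overline 0}$.  By Fact $(6)$, $\mf h_{\overline 0} = \mf g_{\overline 0, 0}$, so $g''$ is weight-homogeneous; and then $[g'', g] = \alpha(g'')\, g$ is a nonzero scalar multiple of $g$.

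For the second case, where $g = h^i_{\overline s}$ for some simple root $\gamma = \alpha_i$ of $\mf g$ and some $\overline s \in \ZZ/r\ZZ$ with $h^i_{\overline s}$ appearing in $\mc B$, let $\alpha$ be the restriction of $\gamma$ to $\mf h_{\overline 0}$.  This is nonzero, since $\sum_{j=0}^{r'-1} \sigma^j(\gamma)$ is a nonzero $\sigma$-invariant element of $\mf h^*$ whose restriction to $\mf h_{\overline 0}$ equals $r' \alpha$.  I would then set $g'' = g_{\overline s, \alpha}$ and $g' = g_{\overline 0, -\alpha} \in \mc B_{\Delta}$; both are weight-homogeneous by construction, and both exist via the explicit formulas of Fact $(7)$ (applied with grading $\overline 0$ for $g'$ and $\overline s$ for $g''$).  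Using those formulas and the fact that $[e_{\sigma^j(\gamma)}, f_{\sigma^k(\gamma)}] = 0$ for $j \neq k$, one computes directly that
\[
[g_{\overline s, \alpha},\, g_{\overline 0, -\alpha}] \;=\; \sum_{j=0}^{r'-1} \eta^{sj}\, h_{\sigma^j(\gamma)} \;=\; h^i_{\overline s},
\]
up to a nonzero scalar.

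The main (mild) obstacle is keeping track of the $\eta$-conventions across the three rows $r' = 1, 2, 3$ of Fact $(7)$; each case is a direct verification that closely mirrors the identity $[g_{\overline s, \alpha}, g_{\overline{-s}, -\alpha}] = \sum_{j} h_{\sigma^j(\gamma)}$ already recorded there.  The only difference is that the partner of $g_{\overline s, \alpha}$ has been moved from $\mf g_{\overline{-s}, -\alpha}$ into $\mf g_{\overline 0, -\alpha}$, which removes the $\eta$-coefficients on the $f_{\sigma^j(\gamma)}$ but preserves the $\eta$-coefficients on the $h_{\sigma^j(\gamma)}$ on the right-hand side.
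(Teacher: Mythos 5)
Your proof is correct and follows essentially the same approach as the paper's. For $g \in \mc B_\Delta$ the paper takes $g'$ to be $g$ itself and $g''$ to be the coroot $[g, g^-]$ of the associated $\mf{sl}_2$-triple, whereas you take any $g'' \in \mf h_{\overline 0}$ with $\alpha(g'') \neq 0$ — the paper's choice is a particular instance of yours. For $g = h^i_{\overline s}$ the paper pairs a weight vector in $\mf g_{\overline s, \alpha}$ (with the $\eta$-coefficients) against the $\eta$-free element of $\mf g_{\overline 0, -\alpha}$ and computes $[g'', g'] = \sum_j \eta^{sj} h_{\sigma^j(\gamma)}$ case-by-case for $r=3$; you do the same pairing (with $g', g''$ swapped) and give a uniform description. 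Your justification that $\alpha = \gamma|_{\mf h_{\overline 0}}$ is nonzero, and that $(\overline s, \alpha), (\overline 0, -\alpha) \in \Delta$, is also correct and a bit more explicit than what the paper records.
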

\begin{proof}

If $g = g_{\pm \xi_i, \pm \beta_i} \in \mc B_{\Delta}$, set $g^- = g_{\mp \xi_i, \mp \beta_i}$.
Let $g' = g$ and $g'' = [g, g^-]$.
As $\{ g, g^-, [g, g^-] \} $ form an $\mf{sl}_2$-triple, $[g'', g']$ is a nonzero scalar multiple of $g$.

If $g \in \mf h$ then  we consider the action of $\sigma $ on $g$.  
We give the proof for $r=3$. 
If $|g| = \overline 0$ then $g = h_{\alpha_i}+h_{\sigma(\alpha_i)}+h_{\sigma^2(\alpha_i)}$ for some simple root $\alpha_i$ and we can take $g'=g_{|\alpha_i|, \alpha_i}$ and $g'' = g_{|-\alpha_i|, -\alpha_i}$.
If $|g| = \overline 1$ then
$g = h_{\alpha_i}+\eta h_{\sigma(\alpha_i)} + \eta^2 h_{\sigma(\alpha_i)}$ for some $\alpha_i$.
Set $g' = e_{\alpha_i} + \eta e_{\sigma(\alpha_i)} + \eta^2 e_{\sigma^2(\alpha_i)}$ and $g'' = f_{\alpha_i} + f_{\sigma(\alpha_i)} + f_{\sigma^2(\alpha_i)}$.  
Likewise, if $|g| = \overline 2$ then 
$g = h_{\alpha_i}+\eta^2 h_{\sigma(\alpha_i)} + \eta h_{\sigma(\alpha_i)}$ for some $\alpha_i$.
Set $g' = e_{\alpha_i} + \eta^2 e_{\sigma(\alpha_i)} + \eta e_{\sigma^2(\alpha_i)}$ and $g'' = f_{\alpha_i} + f_{\sigma(\alpha_i)} + f_{\sigma^2(\alpha_i)}$.  
\end{proof}

\subsection{The main reduction result}\label{ssREDUCTION}
We now prove our main reduction result, Proposition~\ref{newcor6}.

\begin{proof}[Proof of Proposition~\ref{newcor6}]
We may assume that $I$ is generated by a single element $F\neq 0$.
For every $\underline{g'} = (g'_1, \dots, g'_m) \in (\mc B_{\Delta})^m$, using Lemma~\ref{lem:arbitrary congruence classes-twist} choose $G_{\underline g'} \in I$ with $LT_\prec G_{\underline{g'}}\equiv \underline{g'}$.
Let $s_{\underline g'}$ be the smallest power of $t$ occurring in $G_{\underline g'}$, and let $S_{\underline g'} $ be the largest power of $t$.
Let $n$ be an integer so that 
\[ n \geq \max (0, \{ 2S_{\underline g'}-s_{\underline g'} : \underline g' \in (\mc B_{\Delta})^m\}).\]
Let $\ell$ be the smallest power of $t$ occurring in $F$.

Now fix $M = g_1 t^{i_1} \dots g_m t^{i_m}$ with $i_1 > n$.  
We construct $H_M$.
Using Lemma~\ref{lem:zero}, choose $g_1', \dots, g_m' \in \mc B_{\Delta}$ and weight-homogeneous $g_1'', \dots, g_m'' \in \mf g$ so that
$[g_j'', g_j']$ is a nonzero multiple of $g_{m+1-j}$ for $1 \leq j \leq m$.
Let $G = G_{(g_1', \cdots, g_m')} \in I$, and write
 $LT_\prec G = g_1' t^{a_1}\dots g_m' t^{a_m}$.
 Let 
 \[ H_M = \{ g_1'' t^{i_m-a_1}, \{ \dots, \{ g_m'' t^{i_1-a_m}, G\}\cdots \} \}.\]
The proof of  Lemma~\ref{lem:W-technique-twist}
shows that
\[ M = LT_< H_M,\]
which is in $I$ as our definition of $n$ ensures that $0 \leq i_1 - a_m \leq i_j - a_{m+1-j}$ for any $j$.

Throughout the proofs of Lemmata~\ref{lem:sl2theta-twist}, \ref{lem:arbitrary congruence classes-twist}, and \ref{lem:W-technique-twist} we acted on $F$ only with positive powers of $t$.  
Thus the smallest $t$-power in $H_M$ is $\geq \ell$.  
\end{proof}

From Proposition~\ref{newcor6} we  deduce our first just-infinite growth result.
\begin{theorem}\label{thm:current}
Let $\mf g$ be a finite-dimensional simple Lie algebra and let $\sigma \in \Aut \mf g$.
\begin{itemize}
    \item[(1)] The enveloping algebra $U(\mf g[t]^\sigma)$ has just-infinite growth.  That is, if $J$ is a nonzero ideal of $U(\mf g[t]^\sigma)$, then $U(\mf g[t]^\sigma)/J$ has polynomial growth.
    \item[(2)] Let $I$ be a nonzero Poisson ideal of $S(\mf g[t]^\sigma)$.  
Then $S(\mf g[t]^\sigma)/I$ has polynomial growth.
\end{itemize}
\end{theorem}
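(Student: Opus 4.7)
My plan is to prove $(2)$ first and then derive $(1)$. The implication $(2) \Rightarrow (1)$ is standard: the PBW filtration on $U(\mf g[t]^\sigma)$ is good, with associated graded $S(\mf g[t]^\sigma)$, and the commutator on $U$ induces the Kostant-Kirillov Poisson bracket on $S$. For a nonzero ideal $J$ of $U(\mf g[t]^\sigma)$, the associated graded ideal $\gr J$ is a nonzero Poisson ideal of $S(\mf g[t]^\sigma)$; applying $(2)$ to $\gr J$ yields $\GK(S(\mf g[t]^\sigma)/\gr J) < \infty$, and since the $\GK$ of any filtered algebra is bounded above by that of its associated graded (via $\dim V^n \leq \dim (\gr V)^n$), $U(\mf g[t]^\sigma)/J$ has polynomial growth.

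For $(2)$, my first step is to fix a nonzero $F \in I$, let $m_0$ be the maximal length appearing in $F$, and write $F = F_{m_0} + F'$ with $F_{m_0} \in S^{m_0}(\mf g[t]^\sigma)$ nonzero and $F' \in \bigoplus_{m < m_0} S^m(\mf g[t]^\sigma)$. Because the adjoint action of $\mf g[t]^\sigma$ preserves length via the Leibniz rule, the projection $\pi_{m_0}\colon S(\mf g[t]^\sigma) \to S^{m_0}(\mf g[t]^\sigma)$ is $(t\mf g[t])^\sigma$-equivariant, so $\pi_{m_0}(I)$ is a nonzero $(t\mf g[t])^\sigma$-subrepresentation of $S^{m_0}(\mf g[t,t^{-1}]^\sigma)$. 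I then apply Proposition~\ref{newcor6} to obtain integers $n_0, \ell_0$ so that for every length-$m_0$ standard monomial $M_2$ with smallest $t$-power $\geq n_0$ there is $\tilde H_{M_2} \in \pi_{m_0}(I)$ with $LT_< \tilde H_{M_2} = M_2$. Lifting via the same $\ad$-sequence used in the proof of Proposition~\ref{newcor6}, I obtain $F_{M_2} := \ad(x_1) \cdots \ad(x_k) F \in I$ with $\pi_{m_0}(F_{M_2}) = \tilde H_{M_2}$.

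Next, I propagate the reduction to longer monomials. For a standard monomial $M = g_1 t^{i_1} \cdots g_m t^{i_m}$ of length $m \geq m_0$ with $i_{m-m_0+1} \geq n_0$, I factor $M = M_1 M_2$ where $M_2$ consists of the last $m_0$ letters. Then $M_1 F_{M_2} \in I$, so
\[
M \equiv -M_1(\tilde H_{M_2} - M_2) \;-\; M_1(F_{M_2} - \tilde H_{M_2}) \pmod{I}.
\]
The first term is a sum of length-$m$ standard monomials obtained by sorted-merging $M_1$ with length-$m_0$ monomials strictly $<$ $M_2$; a position-by-position comparison with $M = M_1 M_2$ shows each such merge is itself $<$ $M$. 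The second term has length $\leq m-1$, hence is also $<$ $M$. Calling a standard monomial a \emph{normal form} when either $\len M < m_0$, or $\len M \geq m_0$ with $i_{m-m_0+1} < n_0$, iterated reduction (terminating since $<$ is a well-order) expresses every element of $R := S(\mf g[t]^\sigma)/I$ as a linear combination of normal forms.

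Finally, to bound $\GK R$, I equip standard monomials with the complexity $c(M) = A \cdot \len M + \deg M$, where $A$ is chosen large enough that each reduction step is complexity-nonincreasing. The only issue is controlling the $t$-degree of $F_{M_2} - \tilde H_{M_2}$, which is bounded by a fixed shift depending only on $F$ (through the $\ad$-sequence), so any $A$ exceeding that shift suffices. Counting normal forms of complexity $\leq p$ is then direct: length-$<m_0$ normal forms contribute a polynomial in $p$, and for length $m_0 \leq m \leq p/A$ the first $m-m_0+1$ letters come from the finite alphabet of letters of $t$-power $<n_0$ (polynomial in $m$) while the remaining $m_0 - 1$ letters have total $t$-degree $\leq p$ (polynomial in $p$), so the sum over $m$ stays polynomial in $p$. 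Because $c$ is additive under multiplication, any finite-dimensional $V \subseteq R$ satisfies $V^n \subseteq \spann\{\text{normal forms of } c \leq n c_V\}$ for some $c_V$, giving $\dim V^n$ polynomial in $n$ and hence $\GK R < \infty$. The main obstacle I anticipate is the combinatorial merge-sort inequality combined with the careful accounting of $t$-degree shifts needed to choose $A$ so that the reduction is complexity-nonincreasing; once these are established, the normal-form count and its application to $V^n$ are routine.
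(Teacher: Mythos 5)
Your proof of $(2)$ is broadly sound and close to the paper's in spirit: you pick the top length piece $F_{m_0}$, invoke Proposition~\ref{newcor6} for the $(t\mf g[t])^\sigma$-subrepresentation it generates, lift the adjoint sequence back to $F$ to get $F_{M_2}\in I$, and propagate the reduction to longer monomials by multiplying by $M_1$ and using the merge-sort monotonicity of $<$. Your sorted-merge inequality and the complexity function $c(M)=A\len M+\deg M$ both check out, and the normal-form count is correct. The paper organizes the same reduction slightly differently (it passes to $\gr_{\md}$ and then to $\gr_{\len}$ so as to work inside $\md$-homogeneous Poisson ideals, which makes the bookkeeping of degrees automatic), but your direct approach via the equivariant projection $\pi_{m_0}$ and an explicit shift constant $D=\deg F'-\deg F_{m_0}$ is a legitimate alternative.

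The gap is in your one-line derivation of $(1)$ from $(2)$. The inequality you cite, $\dim V^n\le\dim(\gr V)^n$ for the associated graded of the PBW (length) filtration, is false: take $A=U(\mf h)$ for $\mf h$ the Heisenberg algebra and $V=\spann\{1,x,y\}$; then $z=[x,y]\in V^2$, so $\dim V^2=7$, while $W=\spann\{1,\bar x,\bar y\}\subset\gr A=\kk[\bar x,\bar y,\bar z]$ has $\dim W^2=6$. The weaker asymptotic statement $\GK A\le\GK\gr A$ is the result one actually wants, but it is not a black-box fact here: the length filtration on $U(\mf g[t]^\sigma)$ has \emph{infinite-dimensional} pieces, and the associated graded $S(\mf g[t]^\sigma)/\gr_{\len}J$ need not be finitely generated, so the standard \cite[Prop.~6.6]{KL}-type results (finite-dimensional filtration pieces, or finitely generated $\gr$) do not apply. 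This is precisely why the paper introduces the modified degree $\md$: it gives a locally finite filtration $\mc U^\bullet$ on $U(\mf g[t]^\sigma)$ with $\gr_{\md}U(\mf g[t]^\sigma)\cong S(\mf g[t]^\sigma)$, making $\dim\frac{\mc U^j+J}{J}=\dim\frac{\mc S^j+\gr_{\md}J}{\gr_{\md}J}$ an honest identity and reducing both $(1)$ and $(2)$ to one counting Claim. Your complexity function $c$ is essentially the same device; if you filter $U(\mf g[t]^\sigma)$ by $c$ (rather than by length) you recover the paper's argument and the derivation of $(1)$ becomes rigorous.
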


\begin{proof}
If $M$ is a standard monomial in the letters $\mc B[t]^\sigma$,  
we define the {\em modified degree} of $M$ to be
\[ \md M = \deg M + \len M.\]
Let $\mc U^j$ (respectively, $\mc S^j$) be the subspace of $U(\mf g[t]^\sigma)$ (respectively, $S(\mf g[t]^\sigma)$) spanned by standard monomials of modified degree $\leq j$.
As $U(\mf g[t]^\sigma)$ is degree-graded, and, by the PBW theorem, is filtered by length, thus $\mc U^\bullet$ defines a filtration on $U(\mf g[t]^\sigma)$: that is, $\mc U^i \mc U^j \subseteq \mc U^{i+j}$ for all $i, j\in \NN$.
Likewise, $\mc S^\bullet$ defines a filtration on $S(\mf g[t]^\sigma)$.
It suffices to prove that
\[ \dim \frac{\mc U^j + J}{J} \quad \mbox{and} \quad  \dim \frac{\mc S^j + I}{I}\]
have polynomial growth.

Let
\[ \gr_{\md}(U(\mf g[t]^\sigma)) = \bigoplus_{j} \mc U^j/\mc U^{j-1}.\]
This is a (commutative) Poisson algebra: 
if $F \in \mc U^i$ and $G \in \mc U^j$ then $FG-GF \in \mc U^{i+j-1}$ so $\gr_{\md} U(\mf g[t]^\sigma)$ is commutative.

Define
\[ 
\{ \gr_{\md} F, \gr_{\md} G \} = \frac{FG-GF+\mc U^{i+j-2}}{\mc U^{i+j-2}} \in \bigl(\gr_{\md}U(\mf g[t]^\sigma)\bigr)_{i+j-1}.
\]
In fact, as degree alone defines a grading on $U(\mf g[t]^\sigma)$, there is a canonical identification  $\gr_{\md} U(\mf g[t]^\sigma) \cong \gr_{\len}(U(\mf g[t]^\sigma)) = S(\mf g[t]^\sigma)$ as Poisson algebras.
Further, $\gr_{\md} J$ is a nontrivial $\md$-homogeneous Poisson ideal of $S(\mf g[t]^\sigma)$.
Likewise, 
$\gr_{\md} I$ is a nontrivial $\md$-homogeneous Poisson ideal of $S(\mf g[t]^\sigma)$.
As 
\[ \dim \frac{\mc U^j + J}{J} = \dim \frac{\mc S^j + \gr_{\md}J}{\gr_{\md}J}\] 
and similarly for $I$, it suffices to prove:

{\bf Claim:}  Let $K$ be a nontrivial $\md$-homogeneous Poisson ideal of $S(\mf g[t]^\sigma)$. 
Then $\dim \frac{\mc S^j + K}{K}$ is bounded by a polynomial in $j$.  

Let $K' = \gr_{\len}(K)$, which is  a nontrivial $\len$-graded Poisson ideal of $S(\mf g[t]^\sigma)$, and thus meets some $S^m(\mf g[t]^\sigma)$ nontrivially.
Note that $K' \cap S^m(\mf g[t]^\sigma)$ is a $(t \mf g[t])^\sigma$-subrepresentation of $S^m(\mf g[t]^\sigma)$.
By Proposition~\ref{newcor6}, there is $n \in \ZZ_{\geq 1}$ so that 
 for any  standard monomial $M= g_1 t^{i_1} \dots g_m t^{i_m}$  with $i_1 \geq n$, there is $H_M'\in K'$ such that $LT_{<}H_M' =M$.
 Now, for each standard monomial $M$ as above, there is $H_M \in K $ with $H_M' = \gr_{\len} H_M$.
 As the monomial ordering $<$ compares length first, $M = LT_{<} H_M$ as well.
 Further, as $K$ is $\md$-graded we may take $H_M$ to be $\md$-homogeneous.
 
 Let $F \in \mc S^j$.  
Repeatedly using the $H_M$ to reduce monomials involving  $m$ or more powers of $t$ which are bigger than $n$,  we may rewrite $F$ (modulo $K$) without increasing $\md F$ so that no monomial in $F$ contains more than $m-1$ $t$-powers bigger than $n$.
That is, $(\mc S^j + K)/K$ is spanned by the image of the set of standard monomials
$M$ with $\md (M) \leq j$ which admit a factorisation $M=M_1 M_2$, where
$M_1$ is a  standard monomial involving   $t$-powers  $\leq n-1$, and
$M_2$ is a  standard monomial of length $< m$ involving $t$-powers $\geq n$.
Let us call such monomials {\em normal words}, and let
\[ r(j) = \# \{ \text{normal words } M : \md(M) \leq j\} \geq \dim \frac{\mc S^j + K}{K}.\]
It is clear that $r(j) \leq b(j) c(j)$, where
\[ b(j) = \# \{ \mbox{ standard monomials $M_1$ involving only $t$-powers $\leq n-1$ with $\md M_1 \leq j$ }\}\]
and 
\[ c(j) = \# \{ \mbox{ standard monomials $M_2$ of length $< m$ involving only $t$-powers $\geq n$ and with $\md M_2 \leq j$ } \}.\]
Now, modified degree exceeds length, so $b(j) $ does not exceed the number of standard monomials of length $\leq j$ involving $t$-powers between $0$ and $n-1$, which is $ \binom{(\dim \mf g)n+j }{j}$ and is bounded above by a polynomial in $j$ of degree $n(\dim \mf g)$.
And in a monomial $M_2$ of degree at most $j$ and length at most $m-1$ in $t$-powers $\geq n$ there are no more than $j\dim \mf g$ choices for each letter of $M_2$, so $c(j) \leq (j \dim \mf g)^{m-1}$.
Thus $r(j)$ is bounded above by a polynomial in $j$ of degree $n(\dim \mf g) + m-1$.
\end{proof}

\begin{remark}\label{rem:positiveloop}
A small modification to the proof of Theorem~\ref{thm:current} shows that if $J$ is a nonzero ideal of $U((t \mf g[t])^\sigma)$ and $I$ is  a nonzero Poisson ideal of $S((t \mf g[t])^\sigma)$ then $U((t \mf g[t])^\sigma)/J$ and $S((t \mf g[t])^\sigma)/I$ have polynomial growth.  
We leave the details to the reader. 
\end{remark}

\section{Useful technical results}\label{USEFUL}

We will prove the just-infinite growth results Theorems~\ref{ithm:U}(0) and \ref{ithm:S}(0) in the next  section.
In this section we establish a number of useful preparatory results which  will allow similar counting arguments to those in the proof of Theorem~\ref{thm:current} to apply to affine Kac-Moody algebras.

We establish notation, which will apply to the next two sections.  
Let $L$ be an affine Kac-Moody algebra, with central element $c$ and derivation $d$.
Then there are a finite-dimensional simple Lie algebra $\mf g$ and an automorphism $\sigma $ of $\mf g$ so that
\[ L'/(c) \cong \mf g[t, t^{-1}]^\sigma.\]
(Here $L'$ is the derived subalgebra of $L$.)
We may assume that $\sigma$ induces an automorphism of the Dynkin diagram and the root system of $\mf g$; we denote this diagram automorphism by $\sigma$ as well.
Let $r$ be the order of $\sigma$.
Throughout the rest of the paper we fix the meanings of $L$, $\mf g$, $\sigma$, $r$, $d$, $c$ as in this paragraph.
We further fix a primitive $r$th root of unity, $\eta$, and induce a $\ZZ/r\ZZ$-grading on $\mf g$ as in Section~\ref{BASICS}.
We will use other notation from Section~\ref{BASICS} without comment.

Let $\lambda \in \kk$.
Define $U_\lambda(L) = U(L)/(c-\lambda)$ and $S_\lambda(L) = S(L)/(c-\lambda)$.  
As $c-\lambda$ is Poisson central in $S(L)$,  the factor  $S_\lambda(L)$ is a Poisson algebra.
Note that $S_0(L')\cong S(\mf g[t, t^{-1}]^\sigma)$ as Poisson algebras.

We first show that every nonzero Poisson ideal of $S_\lambda(L)$ meets $S_\lambda(L')$.  This requires a technical lemma.

\begin{lemma}\label{lem:centralise}
Let $\lambda \in \kk$.  
Let $G \in S_\lambda(L)$, and suppose that there exists a $\sigma$-equivariant $x \in \mf g$ so that $\{x t^{rj+|x|}, G\} =0$ for all $j \in \ZZ$.  
Then $G \in S_\lambda(L')$.
\end{lemma}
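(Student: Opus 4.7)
Since $c \in L'$, we have $S_\lambda(L) = S_\lambda(L')[d]$ as a commutative polynomial ring over $S_\lambda(L')$, and we may decompose $G = \sum_{k=0}^K G_k d^k$ with each $G_k \in S_\lambda(L')$ and $G_K \neq 0$; the goal is to show $K = 0$. For $j \in \ZZ$ set $n := rj + |x|$, so $xt^n \in L'$ and the hypothesis gives $\{xt^n, G\} = 0$. Because $[d, xt^n] = n\cdot xt^n$ we have $\{xt^n, d\} = -n\cdot xt^n$, so the Poisson Leibniz rule yields
\[
0 \;=\; \{xt^n, G\} \;=\; \sum_{k=0}^{K}\bigl(\{xt^n, G_k\} - (k{+}1)\,n\,G_{k+1}\,xt^n\bigr)d^k,
\]
with the convention $G_{K+1} := 0$. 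Since $d$ is algebraically independent over $S_\lambda(L')$, each coefficient must vanish, giving the recurrence
\[
\{xt^n, G_k\} \;=\; (k{+}1)\,n\,G_{k+1}\,xt^n \qquad (0 \le k \le K,\ j \in \ZZ).
\]

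Assume for contradiction that $K \geq 1$; the case $k = K-1$ reads $\{xt^n, G_{K-1}\} = Kn\, G_K\, xt^n$ for every $j \in \ZZ$. The right hand side grows linearly in $n$, so the plan is to contradict this by showing the left hand side is bounded independently of $n$. Let $B$ be an upper bound for $|a|$ as $yt^a$ ranges over letters appearing in the PBW expansions of $G_{K-1}$ and $G_K$. For $|n| > B$, the letter $xt^n$ is strictly larger (in the order of Section~\ref{ssORDER}) than every letter of $G_{K-1}$ or $G_K$, so for any standard monomial $\nu$ supported in $t$-powers of absolute value at most $B$, the product $\nu \cdot xt^n$ is already in normal form. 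Expanding $\{xt^n, \mu\} = \sum_i [x, y_i] t^{n+a_i} \prod_{j\neq i} y_j t^{a_j}$ for each monomial $\mu = y_1 t^{a_1} \cdots y_m t^{a_m}$ of $G_{K-1}$ and writing each $[x, y_i]$ in the basis $\mc B$: such a term can contribute to the coefficient of $\nu \cdot xt^n$ only when $a_i = 0$ and the expansion of $[x, y_i]$ has nonzero $x$-component, and in that case the resulting contribution is independent of $n$.

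Therefore the coefficient of $\nu \cdot xt^n$ in the LHS is some constant $C_\nu$ independent of $n$, whereas the coefficient on the RHS equals $Kn$ times the coefficient of $\nu$ in $G_K$. Holding $\nu$ fixed and letting $n$ vary over infinitely many values forces the coefficient of $\nu$ in $G_K$ to vanish; as $\nu$ is arbitrary, $G_K = 0$, a contradiction. Hence $K = 0$ and $G \in S_\lambda(L')$. The main delicate point is the coefficient bookkeeping: one has to verify that no standard-monomial reordering induced by the bracket introduces extra $n$-dependent contributions to the coefficient of $\nu \cdot xt^n$, which is guaranteed by taking $|n| > B$ so that $xt^n$ is the unique largest letter in every product that could contribute.
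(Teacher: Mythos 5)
Your proof is correct and takes essentially the same approach as the paper: both decompose $G=\sum G_k d^k$, extract the same coefficient recurrence $\{xt^n,G_k\}=(k+1)n\,xt^n\,G_{k+1}$ from $d$-degree, and then observe that for $|n|$ sufficiently large the coefficients appearing on the left are independent of $n$ while those on the right carry an explicit factor of $n$, forcing the relevant $G_{k+1}$ to vanish. Your version makes the coefficient bookkeeping (isolating the monomials $\nu\cdot xt^n$) more explicit than the paper's brief "LHS changes only in $t$-powers, not coefficients" and runs the conclusion by contradiction from the top degree $K$ rather than inducting over all $i$, but these are cosmetic differences.
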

\begin{proof}
Write $G = \sum_{i=0}^n d^i G_i$, where $G_i \in S_\lambda(L')$.
Then for any $\sigma$-equivariant $x \in  \mf g$ and $m \in r\ZZ + |x|$,
\beq\label{starstar} 0 =  \{x t^m, G\} = \sum_{i=0}^n d^i \Bigl(-(i+1)m  xt^m G_{i+1} +  \{ xt^m, G_i\}\Bigr).\eeq
Thus for all $m, i$, the coefficient of $d^i$ in \eqref{starstar} must vanish, and se we have
\beq\label{star3} \{ xt^m, G_i \} = (i+1)m  xt^m G_{i+1}.\eeq
Fix $i$ and let $m \gg 0$ be large enough so that $t^{-m}$ does not occur in $G_i$.
As $m \gg 0$ varies, from the definition of the Poisson bracket on $S_\lambda(L)$ we see that the LHS of \eqref{star3}
changes only in the powers of $t$ which occur, and not in the coefficients.
On the other hand, the expressions for the coefficients of the RHS involve a factor of $m$ and so vary with $m$.
Thus \eqref{star3} cannot hold for all values of $ m \in r \ZZ + |x|$ unless both sides are identically 0.
We conclude that  
 $G_{i} = 0$ for all  $i \geq 1$, and $G = G_0 \in S_\lambda(L')$.
\end{proof}

\begin{corollary}\label{cor:intersection}
Let $\lambda \in \kk$ and let $I$ be a nonzero Poisson ideal of $S_\lambda(L)$.  Then $I \cap S_\lambda(L') \neq (0)$.
\end{corollary}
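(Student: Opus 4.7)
My plan is a minimal $d$-degree argument combined with Lemma~\ref{lem:centralise}.
Writing $S_\lambda(L) = S_\lambda(L')[d]$ as a polynomial ring in $d$, take $G \in I$ nonzero of minimal $d$-degree $n$ and write $G = \sum_{i=0}^n d^i G_i$ with $G_i \in S_\lambda(L')$ and $G_n \neq 0$. The goal is to show $n = 0$, for then $G \in I \cap S_\lambda(L')$.

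Assume $n \geq 1$ for contradiction. The Poisson computation at the start of the proof of Lemma~\ref{lem:centralise} gives, for any $\sigma$-equivariant $x \in \mf g$ and any $m \in r\ZZ + |x|$,
\[
\{xt^m, G\} = \sum_{i=0}^n d^i \bigl(\{xt^m, G_i\} - (i+1)m\, xt^m\, G_{i+1}\bigr) \in I,
\]
an element of $d$-degree at most $n$ whose $d^n$-coefficient is $\{xt^m, G_n\}$. Minimality of $n$ forces $\{xt^m, G\} = 0$ whenever $\{xt^m, G_n\} = 0$. Hence, if we can produce a nonzero $\sigma$-equivariant $x$ with $\{xt^m, G_n\} = 0$ for \emph{every} $m \in r\ZZ + |x|$, then $\{xt^m, G\} = 0$ for every such $m$, and Lemma~\ref{lem:centralise} applied directly to $G$ yields $G \in S_\lambda(L')$, contradicting $n \geq 1$.

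By the no-cancellation observation from the proof of Lemma~\ref{lem:centralise}, for $|m|$ sufficiently large the vanishing $\{xt^m, G_n\} = 0$ reduces to the purely Lie-theoretic condition that $[x,v] = 0$ for every $\mc B$-letter $v$ appearing in a standard monomial of $G_n$; once this holds, $\{xt^m, G_n\}$ vanishes identically in $m$. The required $x$ therefore exists as soon as the simultaneous $\mf g$-centralizer of the (finitely many) letters of $G_n$ contains a nonzero $\sigma$-equivariant element. The main obstacle is that for a general $G$ this centralizer could a priori be trivial; I would address it by refining the choice of $G$ with a secondary minimality condition --- among nonzero $G \in I$ of minimal $d$-degree, pick one whose top coefficient $G_n$ is simplest under a secondary ordering, say the number of distinct letters of $\mc B$ appearing in $G_n$, or the $\prec$-leading monomial from Section~\ref{ssORDER}. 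Using that Poisson brackets $\{yt^k, G\} \in I$ with $\{yt^k, G_n\} \neq 0$ still have $d$-degree $n$ while altering $G_n$, any failure of the centralizer condition should allow a strict simplification of $G_n$, contradicting this secondary minimality.
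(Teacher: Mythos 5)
Your overall set-up is sound: taking $G \in I$ of minimal $d$-degree $n$, the observation that $\{xt^m, G_n\} = 0$ forces $\{xt^m, G\}$ to have $d$-degree $< n$ and hence to vanish by minimality, and the consequent reduction to finding a $\sigma$-equivariant $x$ centralizing the $\mc B$-letters of $G_n$, are all correct. The gap is the final step, where you propose a secondary minimality but leave the measure unspecified, and the two candidates you float do not work. The number of distinct $\mc B$-letters appearing in $G_n$ need not drop under $\{yt^k, -\}$, since the derivation replaces one letter at a time by a bracket, which can introduce new letters without removing any. The $\prec$-leading monomial also fails: bracketing by $yt^k$ shifts $t$-powers by $k$, and since $k$ can be arbitrarily negative there is no lower bound and hence no well-founded descent.

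The paper's proof sidesteps the issue and is worth comparing to. It does not refine the choice of $G$ at all (indeed it does not use $d$-degree minimality). Instead it fixes once and for all a generator $g_{\overline s,\alpha}$ for a single $(\overline s, \alpha) \in \Delta^+$ and iterates the bracket $\{g_{\overline s,\alpha}t^{rj+s}, -\}$ on $G$. Because $\ad g_{\overline s,\alpha}$ is nilpotent on $\mf g$ and each application either strictly increases the total height of the $\mc B$-letters (bounded above by the length of $G$ times $\height\theta$) or strictly lowers the $d$-degree (bounded below by $0$), after a uniform number $k$ of iterations the result vanishes for every choice of $t$-powers. One then takes $H$ to be the last nonzero iterate; by construction $\{g_{\overline s,\alpha}t^{rj+s}, H\} = 0$ for all $j$, so Lemma~\ref{lem:centralise} applies to $H$ (not to $G$) and gives $0 \neq H \in I \cap S_\lambda(L')$. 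This is the natural way to repair your argument: rather than hunting for an $x$ that centralizes the letters of $G_n$ — which may simply not exist — one fixes $x = g_{\overline s,\alpha}$ and manufactures an element that $x$ centralizes by iterating until just before extinction.
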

\begin{proof}
Let $0 \neq G \in I$, and  choose $(\overline s, \alpha) \in \Delta^+$.  There is $k\geq 1$ so that 
\[ \{ g_\alpha t^{ri_k+s}, \dots, \{ g_\alpha t^{ri_1 +s}, G\}\dots\} = 0\]
for all $i_1, \dots, i_k \in \ZZ$; without loss of generality let $k$ be minimal.  
Thus there is some 
\[ H = \{ g_\alpha t^{ri_{k-1}+s}, \dots, \{ g_\alpha t^{ri_{1} +s}, G\} \dots \} \neq 0.\]
By Lemma~\ref{lem:centralise}, $H \in I \cap S_\lambda(L')$.
\end{proof}

We need to extend the orderings $<$ and $\prec$ defined on loop algebras to $U_\lambda(L)$ and $S_\lambda(L)$.
To this end, 
define a $\sigma$-equivariant basis $\mc B$ of $\mf g$ as in Basic Fact (9), and
note that  $\{d \} \cup \{ g t^{r j + |g|} : g \in \mc B, j \in \ZZ\}$ is a basis of $L/(c)$.
We denote this basis by
$\mc B^d[t,t^{-1}]^\sigma$.
We  extend the ordering $<$ on $\mc B[t, t^{-1}]^\sigma$ to an ordering on $\mc B^d[t,t^{-1}]^\sigma$ by saying that $d > g t^n$ iff $n < 0$ and $d< gt^n $ iff $n \geq 0$.

We must modify \eqref{standard} to give a basis of $U_\lambda(L)$ and $S_\lambda(L)$. 
We will say that a {\em standard monomial}  in the elements of $\mc B^d[t, t^{-1}]^\sigma$ is an expression of the form:
\beq\label{d-std} M = g_1 t^{r m_1 + |g_1|} \dots g_i t^{rm_i+ |g_i|} d^j h_1 t^{r n_1 + |h_1|} \dots h_k t^{rn_k+ |g_k|},\eeq
where the $g_a, h_b \in \mc B$ and  $g_1 t^{r m_1 + |g_1|} \leq \dots \leq g_i t^{rm_i+ |g_i|} < d < h_1 t^{r n_1 + |h_1|} \leq \dots \leq h_k t^{rn_k+ |h_k|}$.
We sometimes write a standard monomial as
\[ M = g_1 t^{m_1} \dots g_i t^{m_i} d^j h_1 t^{n_1 }\dots h_k t^{n_k},\]
and when we do so we assume that $\overline{m_a} = |g_a|$ and $\overline{n_a} = |h_a|$ for all $a$: in other words, that this monomial is an element of $S(L)$.

By the PBW theorem, both $U_\lambda(L)$ and $S_\lambda(L)$ have a basis of standard monomials in the elements of $\mc B^d[t, t^{-1}]^\sigma$.
We say that $i+j+k$ is the {\em length} of $M$, which we denote $\len M$, and that the total $t$-power $\sum_a rm_a + |g_a| + \sum_b rn_b + |h_b|$ is the {\em degree} of $M$, which we denote $\deg(M)$.
Note that $U_\lambda(L')$ and $S_\lambda(L')$ have a basis of standard monomials in the elements of $\mc B[t, t^{-1}]^\sigma$.

We extend the monomial orderings $<$ and $\prec$ from Subsection~\ref{ssORDER} to define two orderings $<$ and $\prec$ on standard monomials in the elements of $\mc B^d[t,t^{-1}]^\sigma$:  the ordering $<$ compares length first, then degree, and then compares monomials lexicographically from left to right, whereas $\prec$ compares length first, then degree, then compares monomials lexicographically from right to left.

We use the following reduction lemma, which applies Proposition~\ref{newcor6} to ideals of $U_\lambda(L), U_\lambda(L')$ or Poisson ideals of $S_\lambda(L), S_\lambda(L')$.

\begin{lemma}\label{lem:extendedreduction}
Let $\lambda \in \kk$.
\begin{itemize}
    \item[$(1)$] Let $J$ be a nonzero ideal of $U_\lambda(L)$ or of $U_\lambda(L')$.
    There are $m, \ell, n \in \ZZ$, with $m, n > 0$, so that if
    \[ M= g_1 t^{i_1}\dots g_m t^{i_m}\]
    is a standard monomial in the elements of $\mc B[t, t^{-1}]^\sigma$ with $i_1 \geq n$, then there is $H_M \in J$ so that
    \[ LT_< H_M = M\]
    and so that all $t$-powers occurring in $H_M$ are $\geq \ell$.
\item[$(2)$] Let $I$ be a nonzero Poisson ideal of $S_\lambda(L)$ or of $S_\lambda(L')$.
    There are $m, \ell, n \in \ZZ$, with $m, n > 0$, so that if
    \[ M= g_1 t^{i_1}\dots g_m t^{i_m}\]
    is a standard monomial in the elements of $\mc B[t, t^{-1}]^\sigma$ with $i_1 \geq n$, then there is $G_M \in I$ so that
    \[ LT_< G_M = M\]
    and so that all $t$-powers occurring in $G_M$ are $\geq \ell$.
      \end{itemize}
\end{lemma}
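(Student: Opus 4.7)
The plan is to reduce all four cases to a single application of Proposition~\ref{newcor6} via the length filtration. In each of $U_\lambda(L)$, $U_\lambda(L')$, $S_\lambda(L)$, $S_\lambda(L')$, the $\lambda$-correction $\lambda a (x,y)\delta_{a+b,0}$ in $[xt^a, yt^b]$ has length $0$, while $[x,y]t^{a+b}$ has length $1$, and $c - \lambda$ has length-$1$ leading term $c$. Hence $\gr_{\len} U_\lambda(L) \cong \gr_{\len} S_\lambda(L) \cong S_0(L)$ and $\gr_{\len} U_\lambda(L') \cong \gr_{\len} S_\lambda(L') \cong S(\mf g[t,t^{-1}]^\sigma)$ as Poisson algebras. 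So our (Poisson) ideal becomes a nonzero length-graded Poisson ideal of $S_0(L)$ or of $S(\mf g[t,t^{-1}]^\sigma)$; in the $L$ cases, Corollary~\ref{cor:intersection} applied with $\lambda = 0$ intersects this with $S_0(L') = S(\mf g[t,t^{-1}]^\sigma)$ to produce, uniformly across all four cases, a nonzero length-graded Poisson ideal $K \subseteq S(\mf g[t,t^{-1}]^\sigma)$. Choosing $m$ with $K \cap S^m(\mf g[t,t^{-1}]^\sigma) \neq 0$, Proposition~\ref{newcor6} supplies integers $n, \ell'$ and, for each standard monomial $M = g_1 t^{i_1} \cdots g_m t^{i_m}$ with $i_1 \geq n$, an element $H'_M \in K \cap S^m$ with $LT_< H'_M = M$ and all $t$-powers $\geq \ell'$.

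Rather than lift $H'_M$ naively (which would admit uncontrolled lower-length terms), I would re-execute the construction behind Proposition~\ref{newcor6} and its preparatory Lemmas~\ref{lem:sl2theta-twist}, \ref{lem:arbitrary congruence classes-twist}, \ref{lem:W-technique-twist} directly inside $U_\lambda(L)$ or $S_\lambda(L)$, starting from a fixed $F$ in the original ideal whose length-$m$ symbol lies in $K \cap S^m$ and generates the cyclic subrepresentation used in that proof; $F$ itself may have lower-length terms involving $d$ in the $L$ cases. Every step of those proofs is a single bracket with some $xt^c$, $c \geq 1$, and two properties underlie the lift. First (top-length compatibility), the $\gr_{\len}$-symbol of $[xt^c, M]$ for a length-$k$ monomial $M$ equals the standard Poisson bracket $\{xt^c, \sigma(M)\}$ on its length-$k$ top part; the $\lambda$-correction and the associative PBW rearrangements all live in strictly smaller length. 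Since the length-$m$ top part of $F$ lies in $S(\mf g[t,t^{-1}]^\sigma)$ (no $d$), this property is preserved under the Poisson iteration, and the length-$m$ top part of the resulting $H_M$ equals $H'_M$ up to a nonzero scalar; hence $LT_< H_M = M$ because $<$ compares length first. Second ($t$-power control), each $[xt^c, -]$ with $c \geq 1$ either replaces a letter $yt^b$ by one of $t$-power $c + b \geq b + 1$, drops a letter leaving only a scalar multiplier, or replaces a $d$ by $-c\, xt^c$ of $t$-power $c \geq 1$; hence every $t$-power in $H_M$ is bounded below by $\min(\ell_F, 1)$, where $\ell_F$ is the minimum $t$-power in $F$, and we may take $\ell = \min(\ell_F, 1)$.

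The hard part is the lift: one must verify the top-length symbol compatibility at each individual bracket used in Lemmas~\ref{lem:sl2theta-twist}--\ref{lem:W-technique-twist} and track the accumulated lower-length contributions without disturbing the inductions on congruence class and leading-term position that drive those proofs. By contrast, the reduction to Proposition~\ref{newcor6} via $\gr_{\len}$ and Corollary~\ref{cor:intersection} is essentially routine.
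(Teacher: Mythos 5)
Your proposal is correct and takes essentially the same route as the paper: pass to $\gr_{\len}$, use Corollary~\ref{cor:intersection} to intersect with $S_0(L')$, apply Proposition~\ref{newcor6}, and lift the adjoint brackets back to $U_\lambda(L)$ (resp.\ $S_\lambda(L)$). The only difference is one of efficiency: the paper does not \emph{re-execute} the construction but instead records, once, the observation that $\gr_{\len}[B,P] = \{B, \gr_{\len} P\}$ whenever the right-hand side is nonzero --- since the construction in Proposition~\ref{newcor6} keeps the top-length symbol nonzero at every intermediate bracket, one simply applies the same adjoint elements $B_1,\dots,B_s$ to the chosen preimage $H \in J$ (with $\gr_{\len} H \in S_0(L')$) and deduces $\gr_{\len} H_M = G_M$, hence $LT_< H_M = M$; this single remark renders the step-by-step ``track the accumulated lower-length contributions'' verification that you flag as the hard part automatic.
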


\begin{proof}
$(1)$.  We give the proof for $J \triangleleft U_\lambda(L)$; the proof for $J \triangleleft U_\lambda(L')$ is similar but easier.
We filter $U_\lambda(L)$ by length of monomials, and define a Poisson bracket on $\gr_{\len} U_\lambda(L)$ as in the proof of Theorem~\ref{thm:current}.
The relation 
\[ xt^i yt^j - yt^j xt^i = [x,y] t^{i+j} + i\delta_{i+j, 0}\kappa(x,y)\lambda\]
on $U_\lambda(L)$ induces the Poisson bracket
\[ \{ xt^i, yt^j\} = [x,y] t^{i+j}\]
in $\gr_{\len} U_\lambda(L)$ and so 
$\gr_{\len} U_\lambda(L) $ may be identified with $ S_0(L)$ as Poisson algebras. 
By Corollary~\ref{cor:intersection} applied to $\gr_{\len} J$, which is a nonzero Poisson ideal of $S_0(L)$, there is $0 \neq H \in J$ so that $\gr_{\len} H \in S_0(L')$.

Let $m = \len ( \gr_{\len} H )  = \len H$ and let $\ell$ be the smallest $t$-power occurring in $H$.
Let $I$ be the Poisson ideal of $S_0(L)$ generated by $\gr_{\len} H$.
By Proposition~\ref{newcor6} there is $n \in \ZZ$ so that 
 if $M= g_1 t^{i_1} \dots g_m t^{i_m}$ is a standard monomial with $i_1 \geq n$, there is $G_M \in I \cap S_0(L')$ with $LT_< G_M = M$.
The procedure in the proof of Proposition~\ref{newcor6} that produces $G_M$ (that is, the procedure in Lemmata~\ref{lem:sl2theta-twist}, \ref{lem:arbitrary congruence classes-twist}, and \ref{lem:W-technique-twist}) produces
 $B_1, \dots, B_s \in \mc B[t, t^{-1}]^\sigma$, involving only non-negative $t$-powers, so that
 \[ \{ B_1, \{ \dots, \{ B_s, \gr_{\len} H\} \dots \} = G_M.\]
 
Now, if $B \in \mc B[t,t^{-1}]^\sigma $ and $P \in U_\lambda(L)$, then
\beq\label{eq:gr} 
\gr_{\len} [B, P] = \{ B, \gr_{\len} P\} \quad \mbox{if $\{B, \gr_{\len} P\} \neq 0$.}
\eeq
Let 
\[ H_M = [ B_1, [ \dots, [ B_s,  H] \dots ].\]
Applying \eqref{eq:gr} we see that $G_M = \gr_{\len} H_M$, and as the monomial ordering $<$ compares length first, $LT_< H_M = LT_< G_M = M$.
As the $t$-powers in the $B_i$ are non-negative, the $t$-powers in $H_M$ are no  smaller than $\ell$.

$(2)$.  This proof is similar to the proof of $(1)$.
By Corollary~\ref{cor:intersection} it suffices to give the proof for a Poisson ideal $I $ of $ S_\lambda(L')$.
Let $0 \neq G \in I$.  
Let $m = \len G$ and let $\ell$ be the smallest $t$-power occurring in $G$.

Filter $S_\lambda(L')$ by length, as in the proof of $(1)$, so $\gr_{\len} S_\lambda(L') = S_0(L')$.
We again apply Proposition~\ref{newcor6} to obtain $n$ so that if $M$ is a  standard monomial of length $m$ involving only $t$-powers $\geq n$,
then there is $G_M$ in the Poisson ideal of $S_0(L')$ generated by $\gr_{\len} G$ with $LT_< G_M = M$.

Temporarily, let $\{- , - \}_0$ denote the Poisson bracket in $S_0(L')$ and let $\{-, - \}_\lambda$ denote the Poisson bracket in $S_\lambda(L')$.  
Then, similarly to \eqref{eq:gr}, if $B \in \mc B[t,t^{-1}]^\sigma$ and $P \in S_\lambda(L')$, then
\beq \label{eq:gr2}
\gr_{\len} (\{ B, P\}_\lambda) = \{ B, \gr_{\len} P\}_0 \quad \mbox{if $\{ B, \gr_{\len} P\}_0 \neq 0$.}
\eeq
As in the proof of $(1)$, there are $B_1, \dots, B_s \in \mc B[t,t^{-1}]^\sigma$, involving only non-negative powers of $t$, so that
\[ G_M = \{ B_1, \{ \dots, \{ B_s, \gr_{\len} G\}_0, \dots, \}_0.\]
Let 
\[ H_M = \{ B_1, \{ \dots, \{ B_s, G\}_\lambda, \dots, \}_\lambda.\]
As $I$ is a Poisson ideal, $H_M \in I$.
As before, applying \eqref{eq:gr2} this time, $LT_< H_M = M$, and $H_M$ does not involve any $t$-powers smaller than $\ell$.
\end{proof}
Let $L_+$ be the sub-Lie algebra of $L$ generated by all $g t^n$ with $n > 0$, and similarly define $L_-$ to be generated by all $gt^n$ with $n< 0$.
For all $\lambda$, then $U(L_+), U(L_-)$ are subalgebras of $ U_\lambda(L)$, and similarly $S(L_+), S(L_-) \subseteq S_\lambda(L)$, where these second inclusions are inclusions of Poisson algebras.

We next apply Lemma~\ref{lem:extendedreduction} to show that a nontrivial ideal of $U_\lambda(L)$ must meet $U(L_+)$, and, symmetrically, $U(L_-)$.
We do not know of a way to show this without using growth.

\begin{proposition}\label{prop:Icapcurrent}
Let $\lambda \in \kk$.
\begin{itemize}
    \item[$(1)$]
Let $J$ be a nonzero ideal of $U_\lambda(L)$ or of $U_\lambda(L')$.  There are nonzero elements $H^+ \in J \cap U(L_+)$ and $H^- \in J \cap U(L_-)$.

\item[$(2)$]
Let $I$ be a nonzero Poisson ideal of $S_\lambda(L)$ or of $S_\lambda(L')$.  There are nonzero elements $G^+ \in I \cap S(L_+)$ and $G^- \in I \cap S(L_-)$.
\end{itemize}
\end{proposition}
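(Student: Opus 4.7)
The plan is to use Lemma~\ref{lem:extendedreduction} to produce an enormous family of elements of $J$ (respectively $I$) whose $<$-leading terms lie in $U(L_+)$ (resp.\ $S(L_+)$), and then to extract a nonzero combination entirely supported in $U(L_+)$ (resp.\ $S(L_+)$) via a linear-algebra dimension count.

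Concretely, applying Lemma~\ref{lem:extendedreduction}$(1)$ to $J$ (resp.\ $(2)$ to $I$) furnishes positive integers $m, n$ (which we may enlarge so that $n \geq 1$) and an integer $\ell$, together with, for each standard monomial $M = g_1 t^{i_1}\cdots g_m t^{i_m}$ in $\mc B[t, t^{-1}]^\sigma$ with $i_1 \geq n$, an $H_M \in J$ (resp.\ $G_M \in I$) with $LT_< H_M = M$ and with every $t$-power in $H_M$ at least $\ell$. Since $n \geq 1$ forces every $i_j \geq 1$, each such $M$ lies in $U(L_+)$ (resp.\ $S(L_+)$). For $D \in \NN$ large, let $\mc M_D$ be the set of such $M$ with $\deg M \leq D$; standard enumeration gives $|\mc M_D| = \Theta(D^m)$. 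Set $V_D := \spann\{H_M : M \in \mc M_D\} \subseteq J$; the distinct $<$-leading terms imply $\dim V_D = |\mc M_D|$. Tracing through the construction of Lemma~\ref{lem:extendedreduction} shows every standard monomial appearing in any such $H_M$ has length $\leq m$, $t$-powers $\geq \ell$, and degree at most $D + C$ for a constant $C$ depending only on the fixed $H \in J$ used (one may take $H$ to be $\ad d$-homogeneous in the $L$ case, so $C = 0$). Call such a monomial \emph{bad} if it fails to lie in $U(L_+)$, i.e.\ if it involves a factor of $d$ or a letter of $t$-power $\leq 0$. Each bad monomial is determined by one ``bad'' letter chosen from a finite set (namely $d$, or $gt^a$ with $g \in \mc B$ and $a \in [\ell, 0]$) together with at most $m-1$ further letters of $t$-power in $[\ell, D+C]$; this yields only $O(D^{m-1})$ bad monomials in total.

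Consequently, for $D$ sufficiently large the linear map $V_D \to \kk^{(\text{bad monomials})}$ extracting the vector of bad-monomial coefficients has codomain of dimension $O(D^{m-1})$, strictly smaller than $\dim V_D = \Theta(D^m)$, and so has nonzero kernel. Any nonzero element of this kernel lies in $V_D \cap U(L_+) \subseteq J \cap U(L_+)$, giving $H^+$; the identical argument with $G_M$ in place of $H_M$ produces $G^+ \in I \cap S(L_+)$. The existence of $H^- \in J \cap U(L_-)$ and $G^- \in I \cap S(L_-)$ follows by the same counting argument once one has the mirror of Lemma~\ref{lem:extendedreduction}, producing reduction elements whose leading monomials have all $t$-powers $\leq -n$; this mirror is obtained by running the proofs of Lemmata~\ref{lem:sl2theta-twist}--\ref{lem:W-technique-twist} with the roles of $g_\theta, g_{-\theta}$ (and correspondingly of $\Delta^\pm$) interchanged throughout.

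The crux---and the main obstacle---is the counting estimate $|\mc B_D| = O(D^{m-1})$. It depends crucially on the uniform lower bound $\ell$ on $t$-powers in $H_M$ (independent of $M$) furnished by Lemma~\ref{lem:extendedreduction}, which confines each bad letter to a finite set, and on the universal length bound $\leq m$, which restricts the remaining letters to contribute only $O(D^{m-1})$. Without either uniformity the bad-monomial count would grow as $\Theta(D^m)$ and the dimension comparison would collapse; it is in precisely this sense that the proposition, as the authors remark just before the statement, uses growth-theoretic input in an essential way.
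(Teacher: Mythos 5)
Your proof is correct, and it uses the same key ingredient (the reduction Lemma~\ref{lem:extendedreduction}) as the paper, but it packages the conclusion differently. The paper instead argues via GK-dimension of a module: it uses the $H_M$'s to show that any finitely generated $U(L_+)$-submodule of $A = U_\lambda(L)/J$ is spanned by a polynomially-bounded set of normal words, so $\GK_{U(L_+)} A < \infty$; since $\GK U(L_+) = \infty$, the canonical map $U(L_+) \to A$ cannot be injective and its kernel $J \cap U(L_+)$ is nonzero. Your argument constructs an element of $J\cap U(L_+)$ directly by a pigeonhole count inside $J$: the span $V_D$ of $\{H_M : M \in \mc M_D\}$ has dimension $\Theta(D^m)$ because the $<$-leading terms are distinct, while the linear conditions ``all bad-monomial coefficients vanish'' number only $O(D^{m-1})$, so the kernel is nonzero for $D$ large. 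Both arguments exploit precisely the same growth-theoretic asymmetry ($\Theta(D^m)$ versus $O(D^{m-1})$), but yours is more constructive and avoids invoking GK-dimension of modules over $U(L_+)$; the paper's version has the advantage of delivering, essentially for free, a quantitative GK-bound on $U_\lambda(L)/J$ as a $U(L_\pm)$-module that is of independent use.

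One detail is worth tightening. Your uniform degree bound $\leq D + C$ on the monomials of $H_M$ is clean in the $L$-case, since $J$ is $\ad d$-stable and one may take $H$ to be $\ad d$-homogeneous (then $C=0$), as you note. In the $L'$-case $J$ need not be $\ad d$-stable, so $H$ cannot be assumed degree-homogeneous; nevertheless the bound survives with $C$ taken to be the spread of total $t$-degrees among the monomials of the fixed $H$, because each application of $\ad B_i$ shifts the total $t$-degree of every monomial by the same amount $\deg B_i$ (including the lower-length correction terms, and even when $c \mapsto \lambda$ enters, since that only happens when the relevant $t$-degree is zero). Spelling this out would make the argument airtight for both cases.
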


\begin{proof}
We give the proof if $J \triangleleft U_\lambda(L)$.

By symmetry, it suffices to prove the result for $H_+$.  
Let $A= U_\lambda(L)/J$.
We  claim that $A$ has finite GK-dimension as a right or left $U(L_+)$-module; by symmetry it suffices to consider the GK-dimension as a right module.

The proof of Lemma~\ref{lem:extendedreduction}(1) produces $H \in J$ and, from $H$, integers  $m, \ell, n$ so that
if
    \[ M= g_1 t^{i_1}\dots g_m t^{i_m}\]
    is a standard monomial with $i_1 \geq n$, then there is $H_M \in U(L'_{\geq 0}) H U(L'_{\geq 0})$ so that
    \[ LT_< H_M = M\]
    and so that all $t$-powers occurring in $H_M$ are $\geq \ell$.
    Without loss of generality, $\ell \leq 0$.
    Let 
    \[D= (\dim \mf g)(n-1)+m-1.\]
    We claim that the GK-dimension of $A$ as a $U(L_+)$-module is at most $D$.

Let $V \subset U_\lambda(L)$ be a finite-dimensional   subspace which includes 1; we will show that the  $U(L_+)$-module $X= (U(L_+) V U(L_+) + J)/J$ has $\GK \leq D$.
(As $\GK_{U(L_+)}A$ is by definition the supremum over all finitely generated $U(L_+)$ submodules $A' \subseteq A$ of $\GK_{U(L_+)} A'$, this is sufficient to prove the claim.)
Since $U(L_+)$ is finitely graded, for  any $j \in \ZZ_{\geq 0}$ the subspace $\{x \in X: \deg x \leq j\}$ is finite dimensional,
so it suffices to show that the dimension, considered as a function of $j$, grows as a polynomial of degree $\leq D$. 

We may enlarge $V$ without damage, so assume that $H \in V U(L_+)$.
Let $s$ be  the minimum $t$-power occurring in any element of $V$ and let $m'$ be the maximum length of any element of $V$.
We may enlarge $V$ again so that $s \leq \ell$ (recall that $\ell \leq 0$) and $m' \geq m$ and so that $V = S^{\leq m'}(\mf g_{\overline s} t^s \oplus \mf g_{\overline{s+1}} t^{s+1} \oplus\dots \oplus \mf g_{\overline 0} t^0 \oplus \kk d)$.
The minimal degree of an element of $V$ is $sm' \leq 0$.

Note that if $xt^k \in L_+$ and $M$ is a monomial in $V$, then
\[ xt^k M = M xt^k + \mbox{ a sum of monomials of length $\leq \len(M)$ involving $t$-powers $\geq s$.}\]
Thus $xt^k M \in V U(L_+)$.
By induction, $U(L_+)V U(L_+) = V U(L_+)$. 
Thus $X$ is spanned by standard monomials 
 $M$ which admit a factorization $M=M^0 M^1$, where
 $M^0$ is a standard monomial of length $\leq m'$ involving $d$ and $t$-powers between $s$ and $0$, 
and $M^1$ is a standard monomial involving $t$-powers $\geq 1$.
For all such $M$, our assumption that $s \leq \ell$ and  our choice of $V$ mean that $H_M \in VU(L_+) = U(L_+)V U(L_+)$.
Working modulo $J$ to rewrite the monomials $M^1$ and repeatedly applying Lemma~\ref{lem:extendedreduction}, we see that $X$ is spanned by the image of  standard monomials  $M$ in $ V U(L_+)$ which admit a factorization $M=M^0M_1M_2$, where $M^0$ is a standard monomial of length $\leq m'$ involving $d$ and $t$-powers between $s$ and $0$, 
$M_1$ is a standard monomial involving $t$-powers between $1$ and $n-1$,  and  $M_2$ is a standard monomial of length $<m$ involving only $t$-powers $\geq n$.

Thus, the 
growth of $X$ is thus bounded by the growth of 
\[ Y = \{ M^0 M_1 M_2 \},\]
where $M^0$ is a standard monomial of length $\leq m$ involving $d$ and $t$-powers between $s$ and $0$, 
$M_1$ is a standard monomial involving $t$-powers between $1$ and $n-1$, and
$M_2$ is a standard monomial  involving only $t$-powers $\geq n$ and with $\len M_2  < m$.  
It thus suffices to show that 
 \[ q(j) =  \#\{ M \in Y : \deg(M) \leq j\}\]
is bounded by a polynomial in $j$ of degree $D$.  

Clearly $q(j)$ is bounded by $a b(j) c(j)$, where 
\[  a = \dim S^{\leq m'}(\mf g t^s \oplus \dots \oplus \mf gt^0 \oplus \kk d),\]
 \[ b(j) = 
\#\{ \mbox{ standard monomials $M_1$ of degree $\leq j-sm'$ involving only $t$-powers between $1$ and $n-1$ }\},
\]
and
\begin{multline*}
    c(j) = \\ \# \{  \mbox{ standard monomials $M_2$  of length $< m$  and degree $\leq j-sm'$ involving only $t$-powers $\geq n$ }\}.
\end{multline*} 
As $M_1$ involves only positive $t$-powers, $\deg M_1 \geq \len M_1$, and so $b(j) $ does not exceed the number of standard monomials of length $\leq j-sm'$ involving $t$-powers between $1$ and $n-1$, which is $\leq \binom{(\dim \mf g)(n-1)+j-sm' }{j-sm'}$ and is bounded above by a polynomial in $j$ of degree $(\dim \mf g)(n-1)$.
And in a monomial $M_2$ of degree at most $j-sm'$ and length at most $m-1$ in $t$-powers $\geq n$ there are no more than $(j-sm') \dim \mf g$ choices for each letter of $M_2$, so $c(j) \leq ((j-sm')\dim \mf g)^{m-1}$.
Thus there is $\lambda \in \RR$ so that 
$q(j) \leq \lambda j^D$ for all but finitely many  $j$, proving the claim.

As $\GK U(L_+) = \infty$, the natural map $U(L_+) \to A$  cannot be injective, and thus $U(L_+) \cap J \neq (0)$.
(This argument is essentially Scholium~\ref{scholium}, but for modules, not algebras.)

The proofs of other statements are similar, using other parts of Lemma~\ref{lem:extendedreduction}.
\end{proof}

\section{Just-infinite growth}\label{PROOFS}

We now prove our first main results, Theorems~\ref{ithm:U}(0) and \ref{ithm:S}(0).  Throughout, fix $\lambda \in \kk$.  Let $J$ be a nonzero ideal of $U_\lambda(L)$ and let $A= U_\lambda(L)/J$.
Let $J'$ be a nonzero ideal of $U_\lambda(L')$ and let $A'= U_\lambda(L')/J'$.
Let $I $ be a nonzero Poisson ideal of $S_\lambda(L)$ and let $C= S_\lambda(L)/I$, and let $I '$ be a nonzero Poisson ideal of $S_\lambda(L')$ and let $C'= S_\lambda(L')/I'$.

We first prove Theorem~\ref{ithm:U}(0).
As  $U_\lambda(L)$ is not finitely graded, there are some technical issues in the proof. 
Our solution is to extend the definition of modified degree from the proof of Theorem~\ref{thm:current} so that the associated graded ring $B = \gr_{\md} U_\lambda/\gr_{\md} J$ of $A$ will be (finitely) connected graded.
Here recall that an $\NN$-graded $\kk$-algebra $R= \bigoplus_{n\geq 0} R_n$ is {\em finitely connected graded} if each $\dim R_n < \infty$ and $R_0 = \kk$.
We then  use the Poisson GK-dimension of \cite{PS} to bound the GK-dimension of $A$ and of $A'$.

We extend the earlier definiton of modified degree to define the  {\em modified  degree} of an element of $\mc B^d[t, t^{-1}]^\sigma$ to be
\[\md g t^n  = |n|+1, \quad \md d = 1.\]
Let $S = \gr_{\md} U_\lambda(L)$ and let $S' = \gr_{\md} U_\lambda(L')$.
As $\md([g t^a, h t^b]) \leq  |a + b| + 1< |a| + |b| + 2 = \md(g t^a)+ \md(h t^b)$, 
and $\md ([d, g t^a]) < \md (g t^a) + \md (d)$, thus $U_\lambda(L) $ is almost commutative with respect to the filtration induced by $\md$, and so, as a ring, $S$ is isomorphic to a polynomial ring in the variables $d$ and $g t^{ra+|g|}$, 
graded by $\deg (g t^a) = |a| + 1$ and $\deg d = 1$.  
The subring $S'$ is isomorphic to a polynomial ring in the variables $g t^{ra+|g|}$.
Thus $S$ is connected graded.
Further, $S$ has a Poisson bracket 	
\[ \{ g t^a, h t^b\} = \begin{cases} [g, h] t^{a+b} &  ab\geq 0 \\
   0 & \mbox{else,} \end{cases} \]
   \[ \{ d, g t^a\} = a g t^a\]
   induced from the commutator in $U_\lambda(L)$, as in the proof of Theorem~\ref{thm:current}, and $S'$ is a Poisson subalgebra of $S$.

Let $B = S/\gr_{\md} J$, and note that $B$ is the associated graded ring of $A$ with respect to the filtration induced by $\md$.  
Likewise, let $B'= S'/\gr_{\md} J'$; we have $B' \cong \gr_{\md}(A')$.
As usual, $\gr_{\md} J$ is a Poisson ideal of $S$ (respectively, $\gr_{\md} J' $ is a Poisson ideal of $S'$) 
and so the Poisson bracket on $S$ (respectively, $S'$) 
descends to $B$ (respectively, $B'$).

The point of introducing the filtration $\md$ is that the GK-dimension of $A$ may be computed   from the growth of $B$.
For $j \in \NN$, let $S_{\leq j}$ denote the span in $S$ of standard monomials of modified degree $\leq j$ and similarly define $S'_{\leq j}$.

\begin{proposition}\label{prop:PS}
For $j \in \NN$, let  $B(j)$ be the image of $S_{\leq j}$ 	in $B$ and let $B'(j)$ be the image of $S'_{\leq j}$ in $B'$.
Then 
\[ \GK A = \varlimsup \log_j \dim B(j)\]
and 
\[ \GK A' = \varlimsup \log_j \dim B'(j)\]
\end{proposition}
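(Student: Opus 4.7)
The plan is to view $\md$ as a multiplicative filtration on $U_\lambda(L)$ with finite-dimensional pieces whose associated graded is $S$, and then sandwich $\GK A$ between two quantities each equal to $\varlimsup \log_j \dim B(j)$; the argument for $A'$ is identical.

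First I would check that the subspaces $U_\lambda(L)_{\leq j}$ spanned by standard monomials of modified degree at most $j$ form a multiplicative filtration with finite-dimensional pieces. The almost-commutativity $\md([xt^a, yt^b]) < \md(xt^a) + \md(yt^b)$ and $\md([d, xt^a]) < \md(d) + \md(xt^a)$ recorded in the preceding discussion gives multiplicativity, and finite-dimensionality is clear since there are only finitely many letters of bounded modified degree. Passing to the quotient, set $A_{\leq j} = (U_\lambda(L)_{\leq j} + J)/J$; then $\gr_\md A = B$, and a telescoping sum yields the key identity $\dim A_{\leq j} = \dim B(j)$.

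The upper bound $\GK A \leq \varlimsup \log_j \dim B(j)$ then follows from standard filtration arithmetic: any finite-dimensional $V \subseteq A$ is contained in some $A_{\leq j_0}$, so $V^n \subseteq A_{\leq j_0 n}$ has dimension at most $\dim B(j_0 n)$, and the elementary identity $\varlimsup_n \log_n \dim B(j_0 n) = \varlimsup_j \log_j \dim B(j)$ gives the bound after a supremum over $V$. For the lower bound, I would invoke the Poisson GK-dimension framework of \cite{PS}: the $\md$-filtration makes $B$ the associated graded Poisson algebra of $A$, so \cite{PS} gives $\GK A \geq \GK_P B$ because the Poisson bracket on $B$ is the principal symbol of the commutator on $A$. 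The algebra $B$ is Poisson-generated by the finite-dimensional subspace of $B(r+1)$ spanned by the images of $d$, of $g t^{|g|}$, and of $g t^{|g|-r}$ for $g \in \mc B$: iterated Poisson brackets $\{xt^a, yt^b\} = [x,y]t^{a+b}$ (valid when $ab \geq 0$), together with $\{d, xt^a\} = a\, xt^a$, recover all generators of $S$ modulo $\gr_\md J$. For this finitely Poisson-generated connected-graded Poisson algebra, $\GK_P B = \varlimsup \log_j \dim B(j)$, completing the sandwich.

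The main obstacle is the lower bound: one must verify that our $\md$-filtration really fits the framework of \cite{PS}, and that $\GK_P B$ indeed equals the graded growth of $B$ given the Poisson generators exhibited above. All other steps reduce to formal manipulations of the identity $\dim A_{\leq j} = \dim B(j)$, and the primed case requires no new argument.
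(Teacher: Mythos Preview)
Your overall strategy matches the paper's: both invoke the Poisson GK-dimension machinery of \cite{PS} (the paper cites \cite[Proposition~3.14]{PS} directly, while you unpack it into separate upper and lower bounds). The upper bound via $\dim A_{\leq j} = \dim B(j)$ is fine.

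There is, however, a genuine gap in your lower bound. Your proposed Poisson-generating set for $B$ --- the images of $d$, of $g t^{|g|}$, and of $g t^{|g|-r}$ for $g \in \mc B$ --- does not Poisson-generate. Already in the untwisted case $r=1$ these are just $d$, $g$, and $g t^{-1}$; since the Poisson bracket on $S$ satisfies $\{x t^a, y t^b\} = [x,y] t^{a+b}$ only when $ab \geq 0$ and $\{d, x t^a\} = a\, x t^a$, iterated brackets and products of these elements never produce any positive $t$-power. So the claim that ``iterated Poisson brackets \dots recover all generators of $S$'' fails, and with it the identification $\GK_P B = \varlimsup \log_j \dim B(j)$.

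The paper avoids this by taking the larger subspace $S_{\leq r+1}$ as generating set --- in particular this contains $\mf g_{\overline 0} t^{\pm r}$ --- and then verifying the hypothesis $B(j) \subseteq B(r+1)^{\{j\}}$ via the nontrivial fact $[\mf g_{\overline 0}, \mf g_{\overline a}] = \mf g_{\overline a}$ (Lemma~\ref{lem:bracket}), which yields $(\ad \mf g_{\overline 0} t^r)^{n-1}(\mf g_{\overline a} t^a) = \mf g_{\overline a} t^{(n-1)r+a}$. You need to enlarge your generating set to include $\mf g_{\overline 0} t^{\pm r}$ and prove (or cite) this bracket lemma; without it the lower bound does not go through.
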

\begin{proof}
This is \cite[Proposition~3.14]{PS}.  
We must check that the hypotheses of that result apply:  that is,  for $B$, that $\{ B(i), B(j)\} \subseteq B(i+j)$ for all $i,j$, and that for some $s$, we have $B(j) \subseteq B(s)^{\{j\}}$ for all $j$, and similarly for $B'$.
Here recall that if $V$ is a  subspace of $B$, then $V^{\{j\}}$ is defined inductively:
\begin{itemize}
\item $V^{\{0\}} = \kk$
\item For $j \in \NN$, define $V^{\{j+1\}} = V V^{\{j\}} + \{ V, V^{\{j\}}\}$
\item In particular $V^{\{1\}} = V$.
\end{itemize}
The needed properties for $B$ and $B'$ follow immediately from similar properties for $S_{\leq j}$, taking $s=r+1$.
That $\{ B(i), B(j)\} \subseteq B(i+j)$ is immediate.
And by Lemma~\ref{lem:bracket}, 
\beq\label{g0action}
\mf g_{\overline a} t^{(n-1)r+a} = (\ad \mf g_{\overline 0} t^r)^{n-1}(\mf g_{\overline a} t^a) \subseteq (S_{\leq r+1})^{\{n\}} 
\eeq
for $ a \in \{ 0, \dots, r-1\}$.
Thus $S_{\leq j} \subseteq (S_{\leq r-1})^{\{j\}}$ for all $j$.
\end{proof}	

We give the proof of \eqref{g0action}.  

\begin{lemma}\label{lem:bracket}
For $a \in \{0, \dots, r-1\}$, we have
\[ \mf g_{\overline a} = [ \mf g_{\overline 0}, \mf g_{\overline a} ] = \spann([x,y] : x \in \mf g_{\overline 0}, y \in \mf g_{\overline a}).\]
\end{lemma}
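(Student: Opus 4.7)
The plan is to combine the Jacobi identity with the irreducibility of $\mf g_{\overline a}$ as a $\mf g_{\overline 0}$-module (recalled at the start of Section~\ref{ssBASICS}). The subspace $[\mf g_{\overline 0}, \mf g_{\overline a}]\subseteq \mf g_{\overline a}$ is clearly a $\mf g_{\overline 0}$-submodule, so by irreducibility it is either $0$ or the whole of $\mf g_{\overline a}$. It therefore suffices to rule out the zero case.

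For $a=0$ I reduce to the claim that $\mf g_{\overline 0}$ is perfect. When $r=1$ this is immediate, since then $\mf g_{\overline 0}=\mf g$ is simple. When $r\in\{2,3\}$, the subalgebra $\mf g_{\overline 0}$ is the fixed subalgebra of a diagram automorphism of a simple Lie algebra of type $A$, $D$, or $E$, and is known to be simple of type $B_\ell$, $C_\ell$, $F_4$, or $G_2$ (see \cite{Kac}); hence perfect. For $a\neq 0$, since $\mf h_{\overline 0}\subseteq \mf g_{\overline 0}$, it suffices to exhibit some $\alpha\in\Delta_{\overline a}$: then choosing a nonzero $g_{\overline a,\alpha}\in\mf g_{\overline a,\alpha}$ and some $h\in\mf h_{\overline 0}$ with $\alpha(h)\neq 0$ gives $[h,g_{\overline a,\alpha}]=\alpha(h)\,g_{\overline a,\alpha}\neq 0$, and so $[\mf g_{\overline 0},\mf g_{\overline a}]\neq 0$.

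The main obstacle is producing such an $\alpha\in\Delta_{\overline a}$ when $\mf g_{\overline a}\neq 0$ and $a\neq 0$. Using the weight-space decomposition $\mf g_{\overline a}=\bigoplus_{\alpha\in\Delta_{\overline a}\cup\{0\}}\mf g_{\overline a,\alpha}$ together with Fact (6), the failure of this would force $\mf g_{\overline a}\subseteq \mf h$, and hence $\mf g_{\overline a}=\mf h_{\overline a}$. A direct dimension count using Table~\ref{table1} and the explicit $\sigma$-orbit structure on the root system of $\mf g$ rules this out case by case: in every twisted case $\dim \mf g_{\overline a}$ strictly exceeds $\dim \mf h_{\overline a}$, so $\Delta_{\overline a}\neq\emptyset$ and the argument closes.
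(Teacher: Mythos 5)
Your proof is correct, and takes a genuinely different route from the paper's. The paper argues constructively: for each basis vector of $\mf g_{\overline a}$ it exhibits explicit $x \in \mf g_{\overline 0}$ and $y \in \mf g_{\overline a}$ with $[x,y]$ a nonzero multiple of that basis vector --- using the $\mf{sl}_2$-triple $\{g_{\overline a,\alpha}, g_{\overline{-a},-\alpha}, [g_{\overline a,\alpha},g_{\overline{-a},-\alpha}]\}$ of Basic Fact (7) to hit each $g_{\overline a,\alpha}$ (note $[g_{\overline{-a},-\alpha},g_{\overline a,\alpha}]\in\mf h_{\overline 0}\subseteq\mf g_{\overline 0}$), and for $h^i_{\overline a}$ a direct computation with $e=\sum_j e_{\sigma^j(\alpha_i)}\in\mf g_{\overline 0}$ and $f=\sum_j\eta^{aj}f_{\sigma^j(\alpha_i)}\in\mf g_{\overline a}$. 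You instead observe that $[\mf g_{\overline 0},\mf g_{\overline a}]$ is a $\mf g_{\overline 0}$-submodule of the irreducible module $\mf g_{\overline a}$ (the irreducibility statement in Section~\ref{ssBASICS} covers $s=0$, which already gives simplicity and hence perfectness of $\mf g_{\overline 0}$), so it suffices to produce one nonzero bracket; you reduce this to the non-emptiness of $\Delta_{\overline a}$ and settle that by a dimension check against Table~\ref{table1}. Both arguments are sound. The paper's has the advantage of being explicit, which suits the computational tone of Section~\ref{BASICS} and makes it clear exactly which brackets realise each basis element; yours is more structural and shorter, at the cost of an opaque final case check. A small streamlining of your last step: if $\Delta_{\overline a}=\emptyset$ for some $a\neq 0$, then $\mf g_{\overline a}=\mf h_{\overline a}$ is concentrated in weight zero, so irreducibility makes it the one-dimensional trivial $\mf g_{\overline 0}$-module; this already gives $[\mf g_{\overline 0},\mf g_{\overline a}]=0$, which is precisely the case you were trying to rule out, so one is not quite done yet --- your dimension count genuinely does work here, and is the right thing to invoke.
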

\begin{proof}
A basis for $\mf g_{\overline a}$ is made up of the $g_{\overline a, \alpha}$ for $\alpha \in \Delta_{\overline a}$ and the $h^i_{\overline a}$ from Table 1.
We show that all these elements are in $[ \mf g_{\overline 0}, \mf g_{\overline a} ]$.
From  the Basic Facts we have
$[g_{\overline{-a}, -\alpha}, g_{\overline a, \alpha}] \in \mf g_{\overline 0}$, and
$[[g_{\overline{-a}, -\alpha}, g_{\overline a, \alpha}], g_{\overline a, \alpha}]$ is a nonzero scalar multiple of $g_{\overline a, \alpha}$.
Writing
\[ h^i_{\overline a} = \sum_{j=0}^{r'} \eta^{aj} h_{\sigma^j(i)},\]
let
\[ e = \sum_{j=0}^{r'} e_{\sigma^j(\alpha_i)} \in \mf g_{\overline 0}, \quad\quad
f = \sum_{j=0}^{r'} \eta^{aj} f_{\sigma^j(\alpha_i)} \in \mf g_{\overline a}.\]
 Then $[e, f] = h^i_{\overline a}$.
 The lemma follows.
\end{proof}

Let $0 \neq H^+ \in J \cap U(L_+)$ and $0 \neq H^- \in J \cap U(L_-)$ be the elements produced by Proposition~\ref{prop:Icapcurrent}.
Let $G^+ = \gr_{\md} H^+$ and let $G^-= \gr_{\md} H^-$. 
We now apply the  reduction procedure in Proposition~\ref{newcor6} to obtain a reduction result for $B$.  This is:

\begin{lemma}\label{lem:reduction}
There exist positive integers $m$, $n$ so that the following hold:
\begin{itemize}
\item[$(1)$]
Every  standard monomial $M = g_{i_1} t^{j_1}\dots g_{i_m} t^{j_m}$ with $n \leq j_1 \leq \dots \leq j_m$ satisfies
\[  M = H + \sum c_s M_s,\]
where $H \in \gr_{\md} J \cap S(L_+)$ is homogeneous in modified degree, the sum is finite, $c_s \in \kk^\times$,  and the $M_s$ are standard monomials so that for each $t$ we have 
 at least one $t$-power $<n$ featuring in $M_s$.
\item[$(2)$]
Every  standard monomial $M = g_{i_1} t^{j_1}\dots g_{i_m} t^{j_m}$ with $ j_1 \leq \dots \leq j_m \leq -n$ satisfies
\[  M = H + \sum c_s M_s,\]
where $H \in \gr_{\md} J \cap S(L_-)$ is homogeneous in modified degree, the sum is finite, $c_s \in \kk^\times$,  and the $M_s$ are standard monomials so that for each $t$ we have 
 at least one $t$-power $>-n$ featuring in $M_s$.
\end{itemize}
\end{lemma}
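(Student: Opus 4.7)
The plan is to prove Part $(1)$; Part $(2)$ is entirely analogous with the roles of $L_+$ and $L_-$ interchanged. By Proposition~\ref{prop:Icapcurrent}$(1)$ there is a nonzero $H^+ \in J \cap U(L_+)$. Passing to the associated graded with respect to $\md$ yields $G^+ := \gr_{\md} H^+$, a nonzero $\md$-homogeneous element of $\gr_{\md} J \cap S(L_+)$; set $j_0 = \md(G^+)$. Let $m$ be the maximum length of a monomial appearing in $G^+$ (so $m \geq 1$, as $G^+$ is not a scalar), and let $G^+_m \in S^m(L_+)$ be its length-$m$ component, which is nonzero.

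The key observation is that although $\gr_{\md} J$ is $\md$-graded but not length-graded, the Poisson action of $L_+ = (t\mf g[t])^\sigma$ preserves length on $S$ (as does multiplication by elements of $S(L_+)$ when one keeps only the length-$m$ part). The subspace
\[
V := \{\text{length-}m\text{ component of }P : P \in \gr_{\md} J \cap S(L_+), \ \len P \leq m\}
\]
is therefore a nonzero $(t\mf g[t])^\sigma$-subrepresentation of $S^m(\mf g[t, t^{-1}]^\sigma)$ with $G^+_m \in V$, and every element of $V$ lifts to a length-$\leq m$ element of $\gr_{\md} J \cap S(L_+)$. Applying Proposition~\ref{newcor6} to the subrepresentation generated by $G^+_m$ inside $V$ yields integers $n, \ell$ with $\ell \geq 1$ (since $G^+_m$ involves only positive $t$-powers) such that every standard monomial $M = g_1 t^{i_1} \dots g_m t^{i_m}$ with $i_1 \geq n$ is the $<$-leading term of some $\bar H_M \in V$. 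Replace $n$ by $\max(n, 1)$ if necessary to ensure $n$ is a positive integer.

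For each such $M$, lift $\bar H_M$ to $K_M \in \gr_{\md} J \cap S(L_+)$ with top-length-$m$ component $\bar H_M$; since $<$ compares length first, $LT_< K_M = LT_< \bar H_M = M$. Projecting $K_M$ onto its $\md = j_0$ homogeneous component (which remains in $\gr_{\md} J$ because the latter is $\md$-graded) produces an $\md$-homogeneous $H_M \in \gr_{\md} J \cap S(L_+)$ with $LT_< H_M = M$. Writing $M = H_M - \sum_s c_s M_s$, the $M_s$ are length-$m$ standard monomials strictly $<$-below $M$ and all of $\md = j_0$. Now I close the argument by well-founded induction on $<$ over the finite set of length-$m$, $\md = j_0$ standard monomials: the $M_s$ having smallest $t$-power $< n$ are retained as the final ``good'' monomials, while those with all $t$-powers $\geq n$ satisfy the hypothesis and, being strictly $<$-smaller than $M$, admit by the induction hypothesis a decomposition of the desired form, which is substituted in. The accumulated $H$-terms sum to an element of $\gr_{\md} J \cap S(L_+)$ which is $\md$-homogeneous of $\md = j_0$, giving the claimed expression.

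The main obstacle is the mismatch between the length and $\md$ gradings: $\gr_{\md} J$ is not length-graded, so Proposition~\ref{newcor6} cannot be applied directly to a length-homogeneous slice of it. The resolution via top-length components hinges on the fact that the Poisson action of $L_+$ on $S(L_+)$ preserves length (no mixing of positive and negative $t$-powers is possible here), enabling us to lift the Proposition's conclusion back into $\gr_{\md} J$ through the length-$\leq m$ preimage and the $\md$-homogeneous projection.
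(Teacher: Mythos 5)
Your overall strategy matches the paper's: take $G^+ = \gr_{\md}H^+$ with $H^+ \in J\cap U(L_+)$ from Proposition~\ref{prop:Icapcurrent}, pass to a length-homogeneous slice so that Proposition~\ref{newcor6} can be applied, lift the resulting reduction back to $\gr_{\md}J\cap S(L_+)$, and then iterate. Your explicit construction of the subrepresentation $V$ of $S^m$ from top-length components is a reasonable way of making precise what the paper's terse ``apply Proposition~\ref{newcor6} to the Poisson ideal generated by $G^+$'' is getting at, and your observation that the Poisson action of $L_+$ preserves both length and modified degree on $S(L_+)$ is exactly the paper's point about $\md = \deg + \len$ there.

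However, there are two places where the argument goes wrong. First, the ``projection onto the $\md = j_0$ component'' of $K_M$ is misdirected: $K_M$ is obtained from $G^+$ by bracketing with elements of $L_+$, each of which \emph{increases} $\md$ by the $t$-power applied, so $K_M$ is already $\md$-homogeneous, but of degree $\md(M) = \deg M + m$, \emph{not} $j_0$. Projecting onto $\md=j_0$ would kill $K_M$ entirely. This is a bookkeeping slip and is easy to repair: no projection is needed at all, since $\md$-homogeneity of $K_M$ is automatic from $\md$-homogeneity of $G^+$ and the fact that $\ad(xt^a)$ shifts $\md$ uniformly by $a$ on $S(L_+)$.

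Second, and more substantively, you claim that ``the $M_s$ are length-$m$ standard monomials,'' and your well-founded induction is carried out only over length-$m$, fixed-$\md$ monomials. But $K_M$ is a lift of $\bar H_M$, and if $G^+$ has nonzero components of length $< m$ (which it may well have, since $\gr_{\md}$ of a non-$\deg$-homogeneous $H^+$ need not be length-homogeneous), then $K_M$ will contain monomials of length $< m$ as well. Those lower-length monomials are $\md$-homogeneous of $\md = \md(M)$ (hence of degree $>\deg M$) and nothing in your argument prevents all their $t$-powers from being $\geq n$; your induction never reaches them, and you cannot reduce them further with Proposition~\ref{newcor6} since the subrepresentation $V$ you built sits inside $S^m$ only. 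You need to either record separately that such residual monomials have length strictly less than $m$ --- which is all that the counting argument in the proof of Theorem~\ref{ithm:U} actually uses, since a monomial of length $< m$ trivially has at most $m-1$ $t$-powers $\geq n$ --- or explain why they cannot occur. As written, your proof silently assumes $G^+$ is length-homogeneous, which is not guaranteed.
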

\begin{proof}
It suffices to prove $(1)$.  
Noting that $S(L_+) \subset S$ is an inclusion of Poisson algebras, we apply Proposition~\ref{newcor6} to reduce $M$ modulo the Poisson ideal of $S(L_+)$ generated by $G^+$.  
This produces an element $H \in \gr_{\md}  J \cap S(L_+)$ satisfying all claimed properties but $\md$-homogeneity.  Since the Poisson bracket in $S(L_+)$ preserves homogeneity with respect  to modified degree (which equals degree plus length here) and $G^+$ is $\md$-homogeneous, $H$ is $\md$-homogeneous as claimed.  
\end{proof}

We now prove Theorem~\ref{ithm:U}(0).

\begin{proof}[Proof of Theorem~\ref{ithm:U}(0)]
The proofs for $U_\lambda(L)$ and for $U_\lambda(L')$ are very similar; we give the proof for $U_\lambda(L)$.
By Proposition~\ref{prop:PS}, it suffices to show that $\dim B(j)$ has polynomial growth.
(For $U_\lambda(L')$ we show that $\dim B'(j)$ has polynomial growth.)

Let $F \in B(j)$.  
Repeatedly applying Lemma~\ref{lem:reduction} to $F$,  we may rewrite $F$ without increasing $\md F$ so that no monomial in $F$ contains more than $m-1$ $t$-powers bigger than $n$ or more than $m-1$ $t$-powers smaller than $-n$.
That is, $B(j)$ is spanned by the image of the set of standard monomials
$M$ with $\md (M) \leq j$ which admit a factorisation $M=M_0 M_1 M_2$, where
$M_1$ is a standard monomial of length $< m$ involving $t$-powers $\leq -n$,
$M_2$ is a  standard monomial involving  $d$ and $t$-powers between $1-n$ and $n-1$, and
$M_3$ is a  standard monomial of length $< m$ involving $t$-powers $\geq n$.
Let us call such monomials {\em normal words}, and let
\[ r(j) = \# \{ \text{normal words } M : \md(M) \leq j \}.\]
We have seen that $\dim B(j) \leq r(j)$;  an argument very similar to the proof of  Proposition~\ref{prop:Icapcurrent}
shows that $r(j) \leq e(j) c(j)^2$, where
\begin{multline*} e(j) =  \#\{ M : \\
\mbox{ $M$ is a standard monomial involving only $d$ and $t$-powers between $1-n$ and $n-1$ and $\md M \leq j$ } \}
\end{multline*}
and
\begin{multline*} c(j) = \# \{ M_2:\\
\mbox{$M_2$ is a standard monomial  of length $< m$ involving only $t$-powers $\geq n$ with $\deg M_2 \leq j$ }\}.\end{multline*}
We have seen 
that $c(j) \leq (j \dim \mf g)^{m-1}$.
Similarly to the proofs of Theorem~\ref{thm:current} and Lemma~\ref{newcor6}, 
$e(j) \leq \binom{(\dim \mf g) (2n-1)+1+j}{j}$.  
Thus $r(j)$ is bounded by a polynomial in $j$ of degree $(\dim \mf g) (2n-1)+2m-1$. 
\end{proof}

We also have:
\begin{proposition}\label{prop:localisedKM}
Let $L$ be an affine Kac-Moody algebra with central element $c$.  Then $U(L)\otimes_{\kk[c]} \kk(c)$ and $U(L') \otimes_{\kk[c]}\kk(c)$ have  just-infinite growth as  $\kk(c)$-algebras.
\end{proposition}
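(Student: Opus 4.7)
The plan is to realize $R := U(L) \otimes_{\kk[c]} \kk(c)$ as a central character quotient of an enveloping algebra over an algebraically closed field, to which Theorem~\ref{ithm:U} applies directly.

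Let $K$ be an algebraic closure of $\kk(c)$; it is algebraically closed of characteristic zero, so every definition and result of the paper applies with $K$ in the role of $\kk$. Write $L_K := L \otimes_\kk K$; this is an affine Kac-Moody $K$-Lie algebra of the same type as $L$, with the same central element $c$ and derivation $d$. Let $\tilde c \in K$ denote the image of the variable $c \in \kk(c) \subseteq K$. The key identification is the $K$-algebra isomorphism
\[ R \otimes_{\kk(c)} K \;\cong\; U(L_K)/(c - \tilde c) =: U_{\tilde c}(L_K), \]
obtained by combining the standard base-change identity $U(L) \otimes_\kk K = U(L_K)$ with the observation that tensoring $U(L_K)$ over $K[c]$ with $K$ along $c \mapsto \tilde c$ imposes the relation $c = \tilde c$.

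That $R$ has infinite GK over $\kk(c)$ is clear: the inclusion $U(L_+) \hookrightarrow U(L)$ extends to a $\kk(c)$-linear injection $U(L_+) \otimes_\kk \kk(c) \hookrightarrow R$ (the subalgebra avoids the relation $c_L = c$ for PBW reasons), and the left hand side has infinite GK-dimension over $\kk(c)$. Now let $J$ be a nonzero ideal of $R$. Since $\kk(c) \subseteq K$ is a flat field extension, $J_K := J \otimes_{\kk(c)} K$ is a nonzero ideal of $U_{\tilde c}(L_K)$. Applying Theorem~\ref{ithm:U} over $K$ with central character $\tilde c \in K$, the $K$-algebra $U_{\tilde c}(L_K)/J_K \cong (R/J) \otimes_{\kk(c)} K$ has polynomial growth. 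Since $R/J$ is finitely generated over $\kk(c)$ (as $L$ is finitely generated as a Lie algebra), GK-dimension is preserved under field extension \cite[Proposition~4.2]{KL}, yielding
\[ \GK_{\kk(c)}(R/J) \;=\; \GK_K\bigl((R/J) \otimes_{\kk(c)} K\bigr) \;<\; \infty. \]
The case of $U(L') \otimes_{\kk[c]} \kk(c)$ is handled identically, with $L'_K$ in place of $L_K$.

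The only real obstacle is the bookkeeping in establishing the base-change identification, which requires carefully distinguishing two roles of the symbol $c$: the central element of $L$ sitting inside $U(L)$, and the variable in the localisation $\kk(c)$. The tensor product $\otimes_{\kk[c]}$ is precisely the device that identifies the two, after which everything follows from flat base change plus the invariance of Gelfand-Kirillov dimension under extension of the ground field.
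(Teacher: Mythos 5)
Your proof is correct, but takes a genuinely different route from the paper's. The paper's proof is a one-liner: it observes that the proof of Theorem~\ref{ithm:U} never actually uses that $\kk$ is algebraically closed, so the theorem holds verbatim over $\kk(x)$, and applying it with $\lambda = x$ gives the result since $U_{\kk(x)}(L)/(c-x) \cong U(L) \otimes_{\kk[c]} \kk(c)$. Your argument instead treats Theorem~\ref{ithm:U} as a black box: you pass to the algebraic closure $K = \overline{\kk(c)}$, identify $R \otimes_{\kk(c)} K$ with the central quotient $U_{\tilde c}(L_K)$, apply the theorem over $K$ literally as stated, and descend via flatness plus the invariance of Gelfand-Kirillov dimension under ground-field extension. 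The trade-offs are real: the paper's route is shorter but puts the burden on the reader to re-audit the proof of Theorem~\ref{ithm:U} and all its ingredients (Chevalley bases, Killing forms, root-space decompositions, etc.) to confirm algebraic closure was never invoked; your route avoids that audit entirely at the cost of the base-change bookkeeping (identifying $R \otimes_{\kk(c)} K \cong U(L_K)/(c - \tilde c)$, checking $L_K$ is affine Kac-Moody, invoking GK-invariance under field extension). Both are sound; yours is more robust as a black-box argument, theirs is more economical once the audit is done.

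One small remark on the presentation: the injectivity of $U(L_+) \otimes_\kk \kk(c) \to R$ is most cleanly justified by noting that $U(L)$ is a free $\kk[c]$-module (by PBW, since $c$ is part of a basis of $L$), so the localisation map $U(L) \to R$ is injective and the $\kk(c)$-span of the PBW monomials in $L_+$ inside $R$ is free; your phrase ``avoids the relation $c_L = c$ for PBW reasons'' gestures at this but could be tightened. Also, GK-invariance under extension of the base field does not require finite generation for the inequality you actually use, namely $\GK_{\kk(c)}(R/J) \leq \GK_K((R/J) \otimes_{\kk(c)} K)$, which follows immediately from $\dim_{\kk(c)} V^n = \dim_K (V\otimes_{\kk(c)} K)^n$; so the parenthetical about $L$ being finitely generated is harmless but not needed.
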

\begin{proof}
None of the steps  in the proof of Theorem~\ref{ithm:U}(0) used that $\kk$ is algebraically closed.  
Thus we may change the ground field to $\kk(x)$, where $x$ is an indeterminate, to obtain:  for any $\lambda\in \kk(x)$, $U_{\kk(x)}(L)/(c-\lambda)$ has just-infinite growth as a $\kk(x)$-algebra.  
This holds in particular for $\lambda =x$, giving that $U_{\kk(x)}(L)/(c-x) \cong U(L)\otimes_{\kk[c]} \kk(c)$ has just-infinite growth as a $\kk(c)$-algebra, and similarly for $L'$.  
\end{proof}

We will also use modified degree to prove Theorem~\ref{ithm:S}(0).
\begin{proof}[Proof of Theorem~\ref{ithm:S}(0)]
As above, modified degree introduces a filtration on $C$ and on $C'$, and we have:
\[ \gr_{\md} C \cong S/\gr_{\md} I =  R, \quad \gr_{\md}(C') \cong S'/\gr_{\md}(I') = R'.\]
Define $R(j), R'(j)$ to be the elements of $R$ (respectively, $R'$) of modified degree $\leq j$.
As in the proof of Theorem~\ref{ithm:U}, we may use $G^+, G^-$ from Proposition~\ref{prop:Icapcurrent} to find $m, n \in \ZZ$ so that for all $j$, 
$R(j)$ is spanned by the
 image of the set of standard monomials
$M$ with $\md (M) \leq j$ which admit a factorisation $M=M_0 M_1 M_2$, where
$M_0$ is a standard monomial of length $< m$ involving $t$-powers $\leq -n$,
$M_1$ is a  standard monomial involving $d$ and $t$-powers between $1-n$ and $n-1$,
and $M_2$ is a standard monomial of length $< m$ involving $t$-powers $\geq n$.
 Let $D = (\dim \mf g) (2n-1)+2m-1$.  
The same argument as in the proof of Theorem~\ref{ithm:U} shows that 
  $\dim R(j)$ is bounded by a polynomial in $j$ of degree $D$ and so is bounded by some $\lambda j^D$.

Let $V$ be a finite-dimensional subspace of $C$, containing 1, and choose $a$ so that $\gr_{\md} V \subseteq R(a)$.
Then $\dim V^j \leq \dim R(aj) \leq \lambda a^D j^D$.
Therefore $\GK C \leq D$.

The proof that $\GK C' < \infty$ is similar.
\end{proof}

We also have:
\begin{proposition}\label{prop:localisedPoisson}
Let $I$ be a nonzero Poisson ideal of $S(L) \otimes_{\kk[c]}\kk(c)$ and let $I'$ be a nonzero Poisson ideal of $S(L') \otimes_{\kk[c]}\kk(c)$.  Then $S(L) \otimes_{\kk[c]}\kk(c)/I$  and $S(L') \otimes_{\kk[c]}\kk(c)/I'$ have polynomial growth.
\end{proposition}
We leave the proof to the reader.

 \begin{remark}\label{rmk:temp}
Propositions~\ref{prop:localisedKM} and \ref{prop:localisedPoisson} should be viewed as temporary results, as later we will prove, as stated in Theorems~\ref{ithm:U}(2) and \ref{ithm:S}(2), that $U(L)\otimes_{\kk[c]} \kk(c)$ and $U(L')\otimes_{\kk[c]} \kk(c)$ are simple, and that $S(L)\otimes_{\kk[c]} \kk(c)$ and $S(L')\otimes_{\kk[c]} \kk(c)$ are Poisson simple.
\end{remark}

\begin{remark}\label{lem:Vir}
The strategy of proof of Theorems~\ref{ithm:U}(0) and \ref{ithm:S}(0) can be modified to give new proofs of \cite[Theorems~5.3 and 5.6]{IS}.
Recall that the {\em Witt algebra} $W = \kk[t, t^{-1}]\del$ is the Lie algebra of polynomial vector fields on the punctured  line (here $\del = \frac{d}{dt}$) and the {\em Virasoro algebra} $\V$ is the unique nontrivial central extension of $W$.  As a vector space we have
\[ \V = W \oplus \kk c\]
with Lie bracket
\[ [f \del, g \del] = (f g' - f'g) \del + \Res_0(f' g''-g'f'') c, \quad  \mbox{$c$ central.} \]
We describe the necessary modifications to prove \cite[Theorem~5.3]{IS}:  that central quotients of $U(\V)$ have just-infinite growth.

Let $\lambda \in \kk$ and let $J$ be a nonzero ideal of $U_\lambda := U(\V)/(c-\lambda)$.
The proof of Proposition~\ref{prop:Icapcurrent} may be modified (using the reduction in \cite[Lemma~2.2]{IS} instead of Proposition~\ref{newcor6}) to show that there are nonzero $H^+ \in J \cap U(t \kk[t]\del)$ and $H^- \in J \cap U(t^{-1} \kk[t^{-1}]\del)$.
Similarly to our methods here, filter $U_\lambda$ by modified degree, where we define $\md(t^{n+1}\del) = |n|+1$.
Let $G^\pm = \gr_{\md}( H^\pm ) \in \gr_{\md}(J)$.
The reduction argument in \cite[Lemma~2.2]{IS} now gives a version of Lemma~\ref{lem:reduction} for $\gr_{\md}(U_\lambda/J)$ and a similar counting argument to the proof of Theorem~\ref{ithm:U} shows that $U_\lambda/J$ has polynomial growth.
\end{remark}

\section{Simplicity of nontrivial central quotients}\label{SIMPLE}

In this section, we prove Theorem~\ref{ithm:U}(1,2) and Theorem~\ref{ithm:S}(1,2).
Throughout the section, let $L$ be an affine Kac-Moody algebra with central element $c$ and derivation $d$, and let $L'/(c) = \mf g[t, t^{-1}]^\sigma$, where $\mf g$ is a finite-dimensional simple Lie algebra and $\sigma \in \Aut(\mf g)$.

One of our main techniques will be to use the following corollary of Theorem~\ref{ithm:U}(0).
(This was the reason for proving such a general version of this theorem, even though the result for $\lambda \neq 0$ will soon be superseded.)
\begin{proposition}\label{prop:scholium}
Let $\lambda \in \kk^*$ and let $B$ be either $U_\lambda(L)$ or $U_\lambda(L')$.  Let $A $ be a $\kk$-subalgebra of $B$ with $\GK A = \infty$.
If $J$ is a nonzero ideal of $B$ then $J\cap A \neq (0)$.
\end{proposition}
\begin{proof}
Combine Scholium~\ref{scholium} with Theorem~\ref{ithm:U}(0).
\end{proof}

We will show that we can restrict without loss of generality to the case $\mf g = \mf{sl}_2$ (and $\sigma = 1$).
Thus to begin we consider this case.

\subsection{The $\mf{sl}_2$ case}\label{sssl2}

In this subsection assume now that $L$ has type $A^{(1)}_1$ (so $\mf g = \mf{sl}_2$ and $\sigma = 1$).
The derived subalgebra $L'$ of $L$  is isomorphic as a vector space to $\mf{sl}_2[t, t^{-1}] \oplus \kk c$.  
We fix notation for elements of  $ L'$:    let $e, f, h$ be the standard basis of $\mf{sl}_2$ and let $e_n = e  t^n$, $f_n = f  t^n$, $h_n = h  t^n$.  
Let $\lambda \in \kk^*$.
In this subsection we will show  that $U_\lambda(L')$ is simple.

Let $\kk[e_\bullet]$ denote the polynomial ring $ \kk[e_n : n \in \ZZ]$, which is  a subalgebra of $U_\lambda(L')$.
Likewise, let $\kk[h_\bullet] =  \kk[h_n: n \in \ZZ]$; note that this is not isomorphic to a subalgebra of  $U_\lambda(L')$, as $h_n, h_{-n}$ do not commute in $U_\lambda(L')$ unless $n =0$.
Given $n \in \ZZ$, let $\kk[h_{\geq n}] = \kk[h_n, h_{n+1}, \dots]$ and similarly define $\kk[e_{\geq n}]$.

We begin by considering the structure of an ideal $J' $ of $\kk[e_\bullet]$ which is of the form $J \cap \kk[e_\bullet]$ for some ideal $J$ of $U_\lambda(L')$.  By Proposition~\ref{prop:scholium}, if $J$ is nonzero, then so is $J'$.

We first  show that $\kk[h_\bullet]$ acts on $\kk[e_\bullet]$, and study this action; this is relevant as any $J'$ as above is an $\kk[h_\bullet]$-submodule of $\kk[e_\bullet]$.
Given $i \in \ZZ$, we write $\frac{\del}{\del i}$ for the derivation $\frac{\del}{\del e_i}$ of $\kk[e_\bullet]$, and similarly, given $I = (i_1, \dots, i_k) \in \ZZ^k$ define $\frac{\del^k}{\del I} = \frac{\del^k}{\del i_1 \dots \del i_k}$.  
Note that the operator $\frac{\del^k}{\del I}$ depends only on the orbit of $I$ under the symmetric group $\mf S_k$.

Going forward, we  without comment use the notation that if $I \in \ZZ^k$, then $I = (i_1, \dots, i_k)$.

\begin{lemma}\label{lem:adh}
\begin{itemize}
\item[(1)]
The subalgebra $\kk[e_\bullet]$ of $U_\lambda(L')$ is preserved by the adjoint action of the $h_n$, and this action gives $\kk[e_\bullet]$ the structure of a $\kk[h_\bullet]$-module.
\item[(2)] Specifically, $h_n$ acts on $\kk[e_\bullet]$ by
\beq\label{ad h}
\ad h_n = 2  \sum_{i \in \ZZ} e_{n+i} \frac{\del}{\del i}.
\eeq
\item[(3)]
For all $\vec{p} = (p_1, \dots, p_k) \in \ZZ^k$, there is $\theta_{\vec p} \in \kk[h_\bullet]$  so that $\theta_{\vec p}$ acts on $\kk[e_\bullet]$ by
\[  \sum_{I \in \ZZ^k} e_{i_1+p_1} e_{i_2+p_2} \dots e_{i_k +p_k} \frac{\del^k}{\del I}.\]
Further, if $\vec p \in \NN^k$ then we may take $\theta_{\vec p} \in \kk[h_0, h_1, \dots]$.
\item[(4)]  If $J'$ is the restriction of an ideal of $U_\lambda(L)$ or of $U_\lambda(L')$ to $\kk[e_\bullet]$, then $J'$ is a $\kk[h_\bullet]$-submodule of $\kk[e_\bullet]$.
\end{itemize}
\end{lemma}
\begin{proof}
$(2)$ is immediate from the fact that $[h_n, e_i] =2 e_{n+i}$ and that taking the commutator with an element is a derivation.
This shows that the adjoint action of the $h_n$ preserves $\kk[e_\bullet]$.

Let $H$ be the   Lie subalgebra of $L'$ generated by the $h_n, n \in \ZZ$.
(Note that $c \in H$; in fact, it is easy to see that $H$ is isomorphic to the infinite-dimensional Heisenberg algebra.) 
We thus have an adjoint $H$-action and induced $U(H)$-action on $\kk[e_\bullet]$; but the adjoint action of $c$ is trivial, so this descends to an action of  $U(H)/(c) \cong \kk[h_\bullet]$.
This proves $(1)$.
Further, an ideal  of a ring is closed under commutation with any element, establishing $(4)$.

It remains to prove $(3)$.
The case $k=1$ is given by \eqref{ad h}:  set $\theta_n = h_n/2$.  
Now let $\vec{r} = (p_1, \dots, p_k, q) \in \ZZ^{k+1}$.
Let $\vec{p} = (p_1, \dots, p_k)$ and denote the standard basis of $\ZZ^k$ by $\{ 1_1 = (1, 0, \dots, 0), 1_2, \dots, 1_k\}$.
Then for $F \in \kk[e_\bullet]$  we have 
\begin{align*}
\theta_q \theta_{\vec p}(F)  & = \sum_{j \in \ZZ} e_{j+q} \frac{\del}{\del j} \Bigl( \sum_{I \in \ZZ^k} e_{i_1+p_1} \dots e_{i_k +p_k} \frac{\del^k F}{\del I} \Bigr) \\ 
&  =  \sum_{j \in \ZZ, I \in \ZZ^k}  e_{j+q} \Bigl(e_{i_1+p_1} \dots e_{i_k +p_k}  \frac{\del^{k+1} F}{\del j \del I} + 
 \frac{\del^k F}{\del I} \frac{\del}{\del j}(e_{i_1+p_1} \dots e_{i_k +p_k}) \Bigr) \\
& =  \theta_{\vec r}(F) + 
 \sum_{I \in \ZZ^k} \frac{\del^k F}{\del I} \sum_{\ell=1}^k \sum_{j \in \ZZ} e_{j+q}\delta_{j, i_\ell+p_\ell} (e_{i_1+p_1} \dots \widehat{e_{i_{\ell}+p_{\ell}}} \dots e_{i_k + p_k}),
\end{align*}
where the notation $\widehat{e_{i_{\ell}+p_{\ell}}}$ means to omit this term.
But 
\[ \sum_{j \in \ZZ} e_{j+q}\delta_{j, i_\ell+p_\ell} (e_{i_1+p_1} \dots \widehat{e_{i_{\ell}+p_{\ell}}} \dots e_{i_k + p_k}) = 
e_{i_\ell+p_\ell+q} e_{i_1+p_1} \dots \widehat{e_{i_{\ell}+p_{\ell}}} \dots e_{i_k + p_k} ,\]
and so
\[ \theta_q \theta_{\vec p} = \theta_{\vec r} + \sum_{\ell=1}^k \theta_{\vec p + q 1_\ell}.\]
By induction on $k$ we have that  $\theta_{\vec r} \in \kk[h_\bullet]$, as needed.
\end{proof}

Lemma~\ref{lem:adh} gives strong constraints on the structure of $\kk[h_\bullet]$-subrepresentations of $\kk[e_\bullet]$.
To see some of the implications, we will use the length grading on $\kk[e_\bullet]$, given by setting $\deg e_i =1$ for all $i$.
For $k \in \NN$, let $\kk[e_\bullet]_k$ denote the vector space of homogeneous elements of $\kk[e_\bullet]$ of length $k$.

\begin{corollary}\label{cor:translate}
Let $J'$ be any $\kk[h_\bullet]$-subrepresentation of $\kk[e_\bullet]$, where the action of $\kk[h_\bullet]$ is given by  Lemma~\ref{lem:adh}.  
Then:
\begin{itemize}
\item[(1)] $J'$ is length-graded;
\item[(2)] $J'$ is closed under the translation automorphism
\[ T:  e_n \mapsto e_{n+1}\]
of $\kk[e_\bullet]$, as well as under $T^{-1}$.
\end{itemize}
\end{corollary}

Before proving Corollary~\ref{cor:translate}, we give an elementary result on the $\theta$-operators of Lemma~\ref{lem:adh}.

\begin{sublemma}\label{sublem:foo}
Let $T_k= \frac{1}{k!}\theta_{1^k} \in \kk[h_\bullet]$, where $1^k = (\underbrace{1, \dots, 1}_k)$. 
Then the adjoint action of $T_k$ applies the translation operator $T$ to all elements of $S[e_\bullet]_k$.  
\end{sublemma}
\begin{proof}
For  $I \in \ZZ^k$,  let $e_I = e_{i_1} \dots e_{i_k}$.
Then 
\[T^{-1}(\theta_{1^k}( e_I))  = \sum_{J \in \ZZ^k} e_J \frac{\del^k e_I}{\del J} 
=\sum_{J \in \mf S_k\cdot I} e_J \frac{\del^k e_I}{\del J} 
 = | \mf S_k \cdot I |  e_I \frac{\del^k e_I}{\del I},
 \]
as $e_I$ and $\frac{\del^k}{\del I}$ depend only on the $\mf S_k$-orbit of $I$.
But $\frac{\del^k }{\del I}e_I = | \Stab_{\mf S_k}(I)|$ so $T^{-1}(\theta_{1^k} (e_I)) = k! e_I$ by the orbit-stabilizer theorem.
\end{proof}
 
\begin{proof}[Proof of Corollary~\ref{cor:translate}]
$(1)$.  A nonzero element of $\kk[e_\bullet]_k$ is an eigenvector for the adjoint action of $\theta_0 = h_0/2$ with eigenvalue $k$.  
As $J'$ is closed under $\theta_0$ it decomposes as a sum of $\theta_0$-eigenspaces, which gives the grading.

For $(2)$, let $E \in J'$; by $(1)$ we may assume that $E$ is homogeneous of length $k$.  
Define $T_k $ as in Sublemma~\ref{sublem:foo}.  
Likewise, let $T^{-1}_k = \frac{1}{k!}\theta_{(-1)^k}$.  
By the sublemma,  $(\ad T_k)(E) = T(E)$ and likewise, $(\ad T^{-1}_k)(E) = T^{-1}(E)$.  
Thus these are are in $J'$, as needed. 
\end{proof}

It is useful to observe that the translation $T$ extends to an automorphism of the loop algebra of $\mf{sl}_2$.

\begin{lemma}\label{lem:translate}
The linear map $T:  \mf{sl}_2[t, t^{-1}] \to \mf{sl}_2[t, t^{-1}]$ defined by
\[ T(e_n) = e_{n+1}, \quad T(h_n) = h_n, \quad T(f_n) = f_{n-1}\]
is a Lie algebra automorphism of $\mf{sl}_2[t, t^{-1}]$. \qed
\end{lemma}

We will need two more computations in $U( L')$.
\begin{lemma}\label{lem:comp1}
The following statements hold in $U( L')$.
\begin{itemize} 
\item[(1)]
We have:
\[ [f_{-a}, e_a^m ] = -m e_a^{m-1}(h_0 + ac)-2 \binom{m}{2} e_a^{m-1}\]
for all $a \in \ZZ, m \in \NN$.
\item[(2)]
For any $E \in \kk[e_{\geq 0}]_k $ we have
\[ [f_0, E] \in \kk[e_{\geq 0}]_{k-1} \cdot (\kk \cdot (h_{\geq 0}) \oplus \kk).\]
\end{itemize}
\end{lemma}
\begin{proof}
$(1)$.  The statement is immediate for $m=1$.  
(Note that our convention is that $[f_{-a}, e_a] = -h_0 - ac$ in $L'$.)
Then by induction we have
\begin{align*}  [f_{-a}, e_a^{m+1}] &= [f_{-a}, e_a^m] e_a + e_a^m [f_{-a}, e_a] \\
&= \bigl(- m e_a^{m-1}(h_0 + ac)-2 \binom{m}{2} e_a^{m-1} \bigr) e_a - e_a^m (h_0 + ac ) \\
&= -m e_a^{m-1} \bigl( e_a (h_0 + ac) + 2e_a \bigr) -2 \binom{m}{2} e_a^m - e_a^m (h_0 + ac) \\
& = -(m+1) e_a^{m}(h_0 + ac)-2 \binom{m+1}{2} e_a^{m},
 \end{align*}
 proving the statement.
 
 $(2)$ is proved by a similar induction.
 For $k=1$ it is immediate, since $[f_0, e_a] = -h_a$ for any $a \geq 0$.
 
 Now let $M$ be a length $k$ monomial in $e_{\geq 0}$ and let $a \geq 0$.
 By induction, write $[f_0, M] = \sum_{i \geq 0} N_i h_i + N$ where $N_i, N \in \kk[e_{\geq 0}]_{k-1}$.
 Then
 \[ [f_0, M e_a] = [f_0, M] e_a + M [f_0, e_a]  = (\sum N_i h_i + N) e_a - M h_a = \sum( N_i e_a h_i + 2 N_i e_{a+i}) - M h_a.\]
 This is in $\kk[e_{\geq 0}]_k \cdot (\kk \cdot (h_{\geq 0}) \oplus \kk)$, as needed.
 \end{proof}

Let $B$ be the subalgbra of $U(L')$ generated by the $e_n$ and the $h_n$ (so $c \in B$).  
Observe that if we set $T(c) = c$ then the formulae in Lemma~\ref{lem:translate} also give an automorphism of $B$.
Further, if $E \in \kk[e_\bullet] \subset  B$, then, by an argument similar to the proof of Lemma~\ref{lem:comp1}(2), $[f_a, E] \in B$ for any $a$.

\begin{lemma}\label{lem:comp2}
Let $E \in \kk[e_{\geq 0}]$, which we think of as a subalgebra of $U(L')$.
Write $E = e_0^m E' + E''$, where $E', E'' \in \kk[e_{\geq 1}]$.
Then
\[ T([f_0, E])- [f_{-1}, T(E)] = c m e_1^{m-1} T(E').\]
\end{lemma}
\begin{proof}
By Lemma~\ref{lem:comp1} we have
\begin{multline} \label{star} T([f_0, E])-[f_{-1}, T(E)] = \\
T \bigl( (-m e_0^{m-1} h_0 - 2 \binom{m}{2}e_0^{m-1}) E' + e_0^m [f_0, E'] + [f_0, E'']\bigl)
 - \\
 (-m e_1^{m-1} (h_0 + c)-2 \binom{m}{2} e_1^{m-1})T(E')- e_1^m[f_{-1}, T(E')]- [f_{-1}, T(E'')]\\
 = c m e_1^{m-1} T(E') + T(e_0^m[f_0, E'])-e_1^m[f_{-1},T(E')] + T([f_0, E'']) - [f_{-1}, T(E'')].
 \end{multline}
 Now, if $n \geq 0$ then $[f_0, e_n] = -h_n$ in $U(L')$.
 We thus observe  that, as $E', E'' \in \kk[e_{\geq 1}]$, the commutators $[f_0, E']$ and $[f_0, E'']$ agree with the commutators in $U_0(L') = U(\mf{sl}_2[t, t^{-1}])$, and likewise for the commutators $[f_{-1}, T(E')]$ and $[f_{-1}, T(E'')]$.  
 It follows that, as $T$ is an automorphism of $\mf{sl}_2[t, t^{-1}]$ and thus extends to an automorphism of $U(\mf{sl}_2[t, t^{-1}])$, we have
 \[ T(e_0^m [f_0, E']) = e_1^m[f_{-1}, T(E')] \quad \mbox{and} \quad T([f_0, E'']) = [f_{-1}, T(E'')] .\]
 Thus \eqref{star} simplifies to 
 \[ cm e_1^{m-1} T(E'),\]
 as claimed.
\end{proof}

We now prove a version of Theorem~\ref{ithm:U}(1,2) for $\mf g = \mf{sl}_2$.

\begin{theorem}\label{thm:sl2}
Let $L$ be an affine Lie algebra of type $A_1^{(1)}$, so $L \cong \widehat{\mf{sl}_2}$.
\begin{itemize}
\item[(1)]
For any $\lambda \in \kk^*$, the algebra $U_\lambda(L')$ is simple.
\item[(2)]
Any nonzero ideal of $U(L')$ contains a nonzero polynomial in $c$; equivalently, $U(L')\otimes_{\kk[c]} \kk(c)$ is simple.
\end{itemize}
\end{theorem}
\begin{proof}
$(1)$.  Let $J$ be a nonzero ideal of $U_\lambda(L')$.
By Proposition~\ref{prop:scholium}, $J' = J \cap \kk[e_\bullet] \neq (0)$.
 By Lemma~\ref{lem:translate}(4), $J'$ is a $\kk[h_\bullet]$-subrepresentation (under the adjoint action) of $\kk[e_\bullet]$.
Thus, $J'$ is length-graded, by Corollary~\ref{cor:translate}(1).
Let $E$ be a nonzero homogeneous element of $J'$ of minimal length, say  $k$.  
We show that we must have  $k =0$; note that if $k=0$ then  $1 \in J$.

Suppose that $k >0$.  
Let $e_m$ be the smallest letter in $E$:  that is, $m$ is the minimum $i$ so that $e_i$ appears in $E$.
Let $E(i) := T^{i-m}(E)$, which is a translate of $E$ with smallest letter $e_i$.  
By Corollary~\ref{cor:translate}(2), the $E(i)$ are in $J'$.
Thus $[f_0, E(0)], [f^{-1}, E(1)] \in J$.
By Lemma~\ref{lem:comp1}
\[ [f_0, E(0)] \in J \cap (\kk \cdot (h_{\geq 0}) \oplus \kk) \cdot \kk[e_\bullet]_{k-1}.\]

Now, consider the element $T_{k-1} $ defined in the proof of Corollary~\ref{cor:translate}(2), which lies in $\kk[h_{\geq 0}]$, considered as a (commutative) subalgebra of $U_\lambda(L')$.
As $\kk[h_{\geq 0}]$ is commutative, the adjoint action of $T_{k-1}$ on $[f_0, E(0)]$ does not affect the $h_i$ and so by the proof of Corollary~\ref{cor:translate}(2) it applies the translation operator $T$.
Thus 
\[ (\ad T_{k-1})([f_0, E_0]) = T([f_0, E_0]).\]
This element is in $J$, which is closed under the adjoint action of $\kk[h_\bullet]$.

Write $E(0) = e_0^m E' + E''$ for some $m \in \NN$ and $E', E'' \in \kk[e_{\geq 1}]$; as $e_0$ appears in $E(0)$ we have $m \geq 1$ and $E' \neq 0$.
Lemma~\ref{lem:comp2} shows (as $\lambda \neq 0$) that $e_1^{m-1}T(E') \in J'$.
But this is a nonzero homogeneous element of length $k-1$,  contradicting the minimality of $k$.

$(2)$.  Suppose that $J$ is a nonzero ideal of $U(L')$ with $J \cap k[c] = (0)$.
Then $J(c) := J \otimes_{\kk[c]} \kk(c)$ is a nonzero ideal of $U(L')  \otimes_{\kk[c]} \kk(c)$.
Combining Scholium~\ref{scholium} and Proposition~\ref{prop:localisedKM} we see that $J(c) \cap \kk(c)[e_\bullet] \neq (0)$ and clearing denominators we obtain a nonzero $E \in J \cap \kk[c, e_\bullet]$.

As in the proof of $(1)$ we can apply Lemma~\ref{lem:comp2} to reduce the length of $E$.  Thus we reduce to the case that $E$ is homogeneous of length $0$ in the $e_i$:  that is, $E \in \kk[c]$, giving a contradiction.
\end{proof}

\subsection{The general case}\label{ssgeneral}
We now let $L$ be an arbitrary affine Lie algebra.
To complete the proof of Theorem~\ref{ithm:U}, we note that $L$ contains (in fact, many choices of) a subalgebra isomorphic to $\widehat{\mf{sl}_2}$.

\begin{lemma}\label{lem:affinesl2subalgebra}
Let $L$ be an affine Kac-Moody algebra, with Chevalley generators $e_i, f_i, h_i$, central element $c$, and derivation $d$.
For any $f_i$, there is a  subalgebra $\overline L$ of $L$ which contains $f_i$, $c$ and $d$ and is isomorphic to $\widehat{\mf{sl}_2}$ via an isomorphism which sends the positive Borel of $\widehat{\mf{sl}_2}$ into the positive Borel of $L$.
\end{lemma}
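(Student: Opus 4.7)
The plan is to construct $\overline L$ as a current-like subalgebra $\mf s[t, t^{-1}]^\sigma \oplus \kk c \oplus \kk d \subseteq L$ attached to a $\sigma$-invariant $\mf{sl}_2$-triple $\mf s\subset \mf g$. I would split into two cases depending on whether $\alpha_i$ is a simple root of $\mf g_{\overline 0}$ (the ``finite case'') or is the affine simple root $\alpha_0$ (the ``affine case''). In the finite case I would take $\mf s := \spann\{e_i, h_i, f_i\}\subset \mf g_{\overline 0}$, on which $\sigma$ acts trivially; then $\mf s[t, t^{-1}]^\sigma = \mf s[t^r, t^{-r}]$. In the affine case, $f_0$ lies in a root space of $L$ of the form $\mf g_{\overline{-k},\beta}\, t^{-k}$ for some $k \geq 1$ and a highest $\mf h_{\overline 0}$-weight $\beta$ of $\mf g_{\overline{-k}}$ (in untwisted types $\beta = \theta$ and $k=1$), and I would take $\mf s$ to be the $\mf{sl}_2$-triple of Basic Fact (7) attached to $\beta$, whose generators $e, f$ are $\sigma$-weight-homogeneous of opposite $\sigma$-weights and with $h \in \mf h_{\overline 0}$. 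In either case $\mf s$ is $\sigma$-invariant, and the bracket formula in $L$ will show that $\overline L$ is a Lie subalgebra containing $f_i$, $c$ and $d$.

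Next I would identify $\overline L \cong \widehat{\mf{sl}_2}$. First, I would identify the loop part $\mf s[t, t^{-1}]^\sigma$ with the untwisted loop algebra $\mf{sl}_2[u, u^{-1}]$ via the substitution $u := t^r$ in the finite case, and via the $\sigma$-equivariant ``unfolding'' isomorphism obtained from Basic Fact (7) in the affine case. The restriction of the Killing form of $\mf g$ to $\mf s$ is a nonzero scalar multiple of the standard invariant form on $\mf{sl}_2$, so the cocycle defining the central extension restricts to a scalar multiple of the standard $\widehat{\mf{sl}_2}$-cocycle, and this scalar will be absorbed by rescaling $c$. The derivation $d$ of $L$ acts on $x u^n$ by an affine-linear function of $n$ and $|x|$, so $d$ should equal a nonzero scalar multiple of the standard degree derivation $d^{\widehat{\mf{sl}_2}}$ plus an inner derivation by an element of $\kk h \subset \mf s$; in particular $d$ will lie in $\overline L$ and will play the role of $d^{\widehat{\mf{sl}_2}}$ up to rescaling and an inner shift.

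For the positive Borel condition, I would check that under the identification $\overline L \cong \widehat{\mf{sl}_2}$ each generator of $\mf b^+_{\widehat{\mf{sl}_2}}$ lies in $\mf b^+_L$: the positive Chevalley generators $e_0^{\widehat{\mf{sl}_2}}, e_1^{\widehat{\mf{sl}_2}}$ become elements of the form $x t^n$ with $n \geq 0$ in a non-negative root space of $L$; the Cartan generators $h_0^{\widehat{\mf{sl}_2}}, h_1^{\widehat{\mf{sl}_2}}$ land in the Cartan subalgebra of $L$; and $d^{\widehat{\mf{sl}_2}}$ becomes $d$ up to scalar and Cartan shift. The hard part will be carrying out the explicit identification $\mf s[t,t^{-1}]^\sigma \cong \mf{sl}_2[u, u^{-1}]$ in the affine-simple-root twisted case, in which one must track the $\sigma$-weights of the chosen $\mf{sl}_2$-triple, choose the correct loop-parameter substitution, and pin down the scalar rescaling and Cartan shift needed to express $d$ in terms of $d^{\widehat{\mf{sl}_2}}$; once this calculation is carried out using the formulas of Basic Fact (7), the positive Borel check will reduce to routine bookkeeping of $t$-degrees.
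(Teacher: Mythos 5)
Your construction is essentially the same as the paper's: you take the $\mf{sl}_2$-triple $\{e,f,h\}$ determined by $e_i = et^k$, $f_i = ft^{-k}$, take the span of all $\sigma$-compatible $t$-shifts of $e$, $f$, $h$ together with $c$ and $d$, and argue that this is a copy of $\widehat{\mf{sl}_2}$. (Your case-split into finite $\alpha_i$ and affine $\alpha_0$ is not needed; the paper treats $k=0$ and $k>0$ uniformly. The twisted identification you flag as ``the hard part'' is also handled by the paper only with ``it is not hard to see.'')

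The real gap is that you assert, without any argument, that ``the restriction of the Killing form of $\mf g$ to $\mf s$ is a nonzero scalar multiple of the standard invariant form on $\mf{sl}_2$.'' This is not bookkeeping: it is precisely the content of the lemma. If $\kappa(e,f)$ were zero, the cocycle would vanish on $\overline L'$, the central extension would be trivial, and $\overline L$ would be $\mf{sl}_2[u,u^{-1}]\oplus\kk c\oplus\kk d$ rather than $\widehat{\mf{sl}_2}$. The paper's proof is almost entirely devoted to verifying $\kappa(e,f)\neq 0$: when $r=1$ this follows from orthogonality of root spaces under the Killing form (nondegeneracy forces $\kappa(\mf g_\alpha,\mf g_{-\alpha})\neq 0$), and when $r>1$ one writes $e$ and $f$ as explicit sums over a $\sigma$-orbit of root vectors and computes $\kappa(e,f)=r\,\kappa(e',f')\neq 0$. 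You should supply a justification here --- either the paper's root-space computation, or the general fact that the trace form of a nontrivial finite-dimensional $\mf{sl}_2$-representation (here, $\ad|_{\mf g}$) is a strictly positive multiple of the standard form. As written, the proposal defers the one claim that actually needs proving.
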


This is proved in \cite{CRRRV}, but we give the computation explicitly here.
\begin{proof}
There is a simple Lie algebra $\mf g$ and an automorphism $\sigma $ of $\mf g$ so that
$ L'/(c) \cong \mf g[t, t^{-1}]^\sigma.$
Let $r$ be the order of $\sigma$, so $r \in \{1, 2, 3\}$.
There are $e, f \in \mf g$, forming part of an $\mf{sl}_2$-triple, and $k \in \ZZ$ so that $e_i = et^k, f_i = ft^{-k}$.  
Let $h = [e, f]$, and let
\[ \overline L = \kk \cdot ( \{ e_i t^{rn} \}_{n \in\ZZ}, \{ f_i t^{rn} \}_{n \in \ZZ} , \{ h t^{rn} \}_{n \in\ZZ}, c, d).\]
From the definition of the Lie bracket in $L$ we have  
\[ [e_i t^{rn}, f_i t^{rm}] = h t^{r(n+m)} +(k+rn) \delta_{n,-m}\kappa(e,f) c,\]
where $\kappa$ is the Killing form on $\mf g$, so $\overline L$ is a Lie subalgebra of $L$. 
It is not hard to see that $\overline L'/(c) \cong \mf{sl}_2[t, t^{-1}]$ via an isomorphism which sends the positive Borel to the positive Borel, so $\overline L'$ is a central extension of $\mf{sl}_2[t, t^{-1}]$.
To show that  $\overline L \cong \widehat{\mf{sl}_2}$ it is enough to show this central extension is nontrivial; in other words, that $\kappa(e,f) \neq 0$. 

  If $r =1$ then $e, f$ span root spaces of $\mf g$.  
 By \cite[Proposition~8.1]{Hum}, $e$ is $\kappa$-orthogonal to all root spaces of $\mf g$ except for $\kk f$.  
 As $\kappa$ is nondegenerate, $\kappa(e,f) \neq 0$.

If $r>1$, then from the tables in \cite[pp. 128-9]{Kac} we see that there are root vectors $e', f' \in \mf g$ so that $\{e', f', [e',f']\}$ form an $\mf{sl}_2$-triple, and an $r$'th root of unity $\eta$ so that 
\[ e = \sum_{i=0}^{r-1} \eta^i \sigma^i(e'), \quad\quad f = \sum_{i=0}^{r-1} \eta^{r-i} \sigma^i(f').\]
Applying \cite[Proposition~8.1]{Hum} again we have
\[ \kappa(e,f) = \sum_{i=0}^{r-1} \kappa(\sigma^i(e'), \sigma^i(f')) = r \kappa(e', f') \neq 0,\] 
where we have used that $\sigma$ preserves the Killing form $\kappa$.  
 Thus $\overline L'$ is a nontrivial central extension of $\mf{sl}_2[t, t^{-1}]$ and thus $\overline L \cong \widehat{\mf{sl}_2}$.
\end{proof}

\begin{remark}\label{rem:c}
Write the standard basis of $\widehat{\mf{sl}_2}$ as $\{ et^n\} \cup \{ f t^n\} \cup \{h t^n\} \cup \{ C, d\}$, where $C$ is central.
We caution that the  isomorphism $\overline L \to \widehat{\mf{sl}_2}$ constructed above may not send $c$ to $C$; it does, however, send $c$ to a nonzero scalar multiple of $C$.
\end{remark}

We now complete the proof of Theorem~\ref{ithm:U} by proving parts $(1)$ and $(2)$, which are, respectively, parts (1) and (2) of the next result.
\begin{theorem}\label{thm:U12}
Let $L$ be an affine Lie algebra.
\begin{itemize}
\item[(1)] For any $\lambda \in \kk^*$, the algebras $U_\lambda(L)$ and $U_\lambda(L')$ are simple.
\item[(2)] Any nonzero ideal of $U(L)$ or of $U(L')$ contains a nonzero element of $\kk[c]$; equivalently, the algebras $U(L) \otimes_{\kk[c]}\kk(c)$ and $U(L') \otimes_{\kk[c]}\kk(c)$ are simple.
\end{itemize}
\end{theorem}
\begin{proof}
By Lemma~\ref{lem:affinesl2subalgebra}, let $\overline L$ be a Lie subalgebra of $L$ which is isomorphic to $\widehat{\mf{sl}_2}$.

$(1)$.   Let $J$ be a nonzero ideal of $U_\lambda(L)$ or of $U_\lambda(L')$.
By Proposition~\ref{prop:scholium}, $J \cap U_\lambda(\overline L') \neq (0)$ and thus $1 \in J$ by Theorem~\ref{thm:sl2}(1).

$(2)$.  This proof is similar, applying Scholium~\ref{scholium}, Proposition~\ref{prop:localisedKM}, and Theorem~\ref{thm:sl2}(2)  to the localised ideal $J \otimes_{\kk[c]}\kk(c)$.
\end{proof}

We next finish the proof of Theorem~\ref{ithm:S} by proving parts (1) and (2).

\begin{theorem}\label{thm:S12}
Let $L$ be an affine Lie algebra.
\begin{itemize}
\item[(1)] For any $\lambda \in \kk^*$, the algebras $S_\lambda(L)$ and $S_\lambda(L')$ are Poisson simple.
\item[(2)] Any nonzero Poisson ideal of $S(L)$ or of $S(L')$ contains a nonzero element of $\kk[c]$; equivalently, $S(L) \otimes_{\kk[c]}\kk(c)$ and $S(L') \otimes_{\kk[c]}\kk(c)$ are Poisson simple.
\end{itemize}
\end{theorem}

\begin{proof}
Again, let $\overline L$ be a Lie subalgebra of $L$ which is isomorphic to $\widehat{\mf{sl}_2}$.
Define $e_n, h_n, f_n \in \overline L'$ as in Subsection~\ref{sssl2}.

$(1)$.
Let $J$ be a nonzero Poisson ideal of $S_\lambda(L)$ or $S_\lambda(L')$.
By Theorem~\ref{ithm:S}(0) and Scholium~\ref{scholium},  $J' := J \cap \kk[e_\bullet] \neq (0)$.

Since $J$ is closed under Poisson brackets with any element of $\overline L'$, again $J'$ is a $\kk[h_\bullet]$-subrepresentation of $\kk[e_\bullet]$.
Thus Corollary~\ref{cor:translate} applies to $J'$, and again $J'$ is graded and invariant under the translation operator $T$.
Let $E \in J'$ be a nonzero homogeneous element of minimal length $k$; again we will show that $k=0$ and so  $1 \in J $.

Suppose that $k>0$. Again let $e_m$ be the smallest letter in $E$ and let
$E(i) := T^{i-m}(E)$ for $i\in \ZZ$, so $E(i) \in J'$ has smallest letter $e_i$.

Writing $E(0) =  e_0^m E' + E''$ for $E', E'' \in \kk[e_{\geq 1}]$ we have
\begin{multline*}
 T(\{ f_0, E(0)\})- \{ f_{-1}, E(1)\} \\
 = T\bigl(-m e_0^{m-1}h_0 E' + e_0^m \{f_0, E'\} + \{ f_0, E''\}\bigr) 
 - \bigl(-m e_1^{m-1}(h_0+ \lambda) T(E')+ e_1^m \{ f_{-1}, T(E')\}+ \{ f_{-1}, T(E'')\}\bigr) \\
 = \lambda m e_1^{m-1}T(E').
 \end{multline*}
This is  a nonzero homogeneous element of $J'$ of length $k-1$, giving a contradiction.

The proof of $(2)$ combines a similar calculation, Proposition~\ref{prop:localisedPoisson}, and a modification of the proof of Theorem~\ref{thm:U12}(2).  
We leave the details to the reader.
 \end{proof}
 
\section{Other applications}\label{APPLICATIONS}

In this section, we give other applications of our growth results.

In Proposition~\ref{prop:Icapcurrent} we used growth to show that nontrivial ideals of $U_\lambda(L)$ must meet $U(L_+)$.
However, we can use Theorem~\ref{ithm:U}(0) to obtain much stronger results of a similar flavour.

For simplicity, we state this next result only for untwisted loop algebras, although a similar result clearly holds in the twisted case.
Corollary~\ref{cor:cute}, which is an immediate consequence, seems rather surprising without the growth context.
\begin{proposition}\label{prop:cute}
 Let $ \mf g$ be a finite-dimensional simple Lie algebra.  Let $J$ be a nonzero ideal of $U(\mf g[t, t^{-1}])$ and let $\mf k$ be any infinite-dimensional Lie subalgebra of $\mf g[t, t^{-1}]$.  Then $J \cap U(\mf k)$ is nonzero.
\end{proposition}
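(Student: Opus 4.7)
The plan is to deduce this as a direct corollary of Theorem~\ref{ithm:U}, using the monotonicity of Gelfand--Kirillov dimension under subalgebras. Let $L$ be the untwisted affine Kac-Moody algebra associated to $\mf g$, so that $U(\mf g[t,t^{-1}]) \cong U_0(L')$. Since $J$ is a nonzero ideal of $U_0(L')$, Theorem~\ref{ithm:U} gives that $U(\mf g[t,t^{-1}])/J$ has polynomial growth, and in particular $\GK U(\mf g[t,t^{-1}])/J < \infty$.

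I would then argue by contradiction. Suppose that $J \cap U(\mf k) = 0$. By the PBW theorem, $U(\mf k)$ embeds as a subalgebra of $U(\mf g[t,t^{-1}])$, so the composition
\[ U(\mf k) \hookrightarrow U(\mf g[t,t^{-1}]) \twoheadrightarrow U(\mf g[t,t^{-1}])/J \]
is injective. The definition of GK-dimension as a supremum over finite-dimensional subspaces makes it immediate that GK-dimension is monotone under inclusions of subalgebras, so
\[ \GK U(\mf k) \;\leq\; \GK\bigl(U(\mf g[t,t^{-1}])/J\bigr) \;<\; \infty. \]

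The final step is to observe that $\GK U(\mf k) = \infty$ whenever $\mf k$ is infinite-dimensional. Indeed, for any $n$, choose linearly independent elements $x_1,\dots,x_n \in \mf k$ and extend them to an ordered PBW basis of $\mf g[t,t^{-1}]$. Setting $V = \kk + \kk x_1 + \dots + \kk x_n \subseteq U(\mf k)$, the $\binom{n+j}{n}$ ordered monomials of length $\leq j$ in $x_1,\dots,x_n$ lie in $V^j$ and are linearly independent by PBW. Thus $\dim V^j \geq \binom{n+j}{n}$ grows like $j^n$, giving $\GK U(\mf k) \geq n$ for every $n$. This contradicts the previous paragraph, so $J \cap U(\mf k) \neq 0$, as required.

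There is essentially no obstacle here: once Theorem~\ref{ithm:U} is in hand, the proposition is a near-immediate consequence. The only minor point requiring care is the computation $\GK U(\mf k) = \infty$ for $\mf k$ infinite-dimensional, which is handled by the explicit PBW counting above (as $\mf k$ need not be finitely generated as a Lie algebra, one cannot simply cite $\GK U(\mf h) = \dim \mf h$).
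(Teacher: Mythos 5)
Your proof is correct and follows essentially the same line of reasoning as the paper: apply Theorem~\ref{ithm:U} to conclude $U(\mf g[t,t^{-1}])/J$ has finite GK-dimension, note that $\GK U(\mf k) = \infty$, and conclude the map $U(\mf k) \to U(\mf g[t,t^{-1}])/J$ cannot be injective. The only difference is that you spell out (correctly, via PBW counting) why $\GK U(\mf k) = \infty$ for any infinite-dimensional Lie subalgebra $\mf k$, which the paper simply asserts.
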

\begin{proof}
This is a direct application of Proposition~\ref{prop:scholium} with $\lambda =0$.
\end{proof}

\begin{corollary}\label{cor:cute}
In the situation of Proposition~\ref{prop:cute},  if $X \subset \ZZ$ is any infinite set, for example $X$ consists of all of the powers of 7 or all of the primes, and $g$ is any element of $\mf g$, then $J$ contains an element involving only $g t^x$ for $x \in X$.  
\end{corollary}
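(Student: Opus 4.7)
The plan is to apply Proposition~\ref{prop:cute} to a carefully chosen infinite-dimensional Lie subalgebra $\mf k$ of $\mf g[t,t^{-1}]$. The natural choice, given what we want to conclude, is
\[ \mf k = \spann_{\kk}\{ g t^x : x \in X\}. \]
I would first check that $\mf k$ really is a Lie subalgebra. This is immediate (in fact, $\mf k$ is abelian): for any $a,b \in X$ we have
\[ [g t^a, g t^b] = [g,g]\, t^{a+b} = 0 \]
by antisymmetry of the Lie bracket on $\mf g$, so $\mf k$ is closed under the bracket.

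Next, assuming $g \neq 0$ (the case $g=0$ being trivial), the elements $\{g t^x : x \in X\}$ are $\kk$-linearly independent in $\mf g[t,t^{-1}]$ since they live in distinct $t$-degrees. Because $X$ is infinite, $\mf k$ is therefore infinite-dimensional. Proposition~\ref{prop:cute} now applies directly and yields
\[ J \cap U(\mf k) \neq 0. \]

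Finally, since $\mf k$ is abelian with basis $\{g t^x : x \in X\}$, the enveloping algebra $U(\mf k)$ is canonically identified with the polynomial ring $\kk[g t^x : x \in X]$. Any nonzero element of $J \cap U(\mf k)$ is thus a nonzero polynomial in the $g t^x$ with $x \in X$, which is exactly the element whose existence is claimed in the corollary. There is no real obstacle in this argument — all of the work is already contained in Proposition~\ref{prop:cute} (and hence ultimately in Theorem~\ref{ithm:U}); the corollary amounts to the observation that one may choose $\mf k$ so that elements of $U(\mf k)$ have the desired syntactic form.
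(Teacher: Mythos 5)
Your argument is correct and is essentially the paper's own proof: the paper also takes $\mf k$ to be the (abelian, infinite-dimensional) span of $\{g t^x : x \in X\}$ and applies Proposition~\ref{prop:cute} directly. You have merely spelled out the verifications that the paper leaves implicit.
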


\begin{proof}
The vector space spanned by $\{ g t^x : x\in X\}$ is an infinite-dimensional (abelian) Lie subalgebra of $\mf g[t, t^{-1}]$, so this follows directly from Proposition~\ref{prop:cute}.
\end{proof}

It is well-known that the enveloping algebras $U(\mf g[t]^\sigma)$, $U(L)$, etc., are not left or right noetherian.
However, the results of this paper, as well as \cite[Corollary~5.14]{LSS}, raise the natural question of whether they satisfy the ascending chain condition on two-sided ideals.  
We close with two results related to this question. 

In the next two results, let $\mf g$ be a finite-dimensional simple Lie algebra with diagram automorphism $\sigma$, and let $L$ be the affine Kac-Moody algebra associated to $\mf g$ and $\sigma$.

\begin{proposition}\label{prop:ACC}
The algebras
$U(\mf g[t]^\sigma)$, $U(L)$, and  $U(L')$ satisfy the ascending chain condition (ACC) on completely prime ideals.
\end{proposition}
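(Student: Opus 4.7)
My plan has two steps: reduce ACC for $U(L)$ and $U(L')$ to ACC for algebras of just-infinite growth, then deduce ACC from just-infinite growth.

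For the first step, let $(P_i)$ be an ascending chain of completely prime ideals of $U(L)$. The intersections $(P_i \cap \kk[c])$ form an ascending chain of completely prime ideals in the Noetherian PID $\kk[c]$, hence stabilize. After passing to a tail, either all $P_i \supseteq (c-\lambda)$ for some common $\lambda \in \kk$ --- in which case the tail descends to a chain of completely primes in $U_\lambda(L)$, which has just-infinite growth by Theorem~\ref{ithm:U} --- or all $P_i \cap \kk[c] = (0)$ --- in which case the tail extends via central localization to a chain of completely primes in $U(L)\otimes_{\kk[c]}\kk(c)$, which has just-infinite growth by Proposition~\ref{prop:localisedKM}. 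The identical argument handles $U(L')$, and $U(\mf g[t]^\sigma)$ has just-infinite growth directly by Theorem~\ref{thm:current}(1), so the reduction covers all three algebras.

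For the second step, let $R$ be a finitely generated $\kk$-algebra of just-infinite growth and suppose, for contradiction, that $P_1 \subsetneq P_2 \subsetneq \cdots$ is an infinite strict chain of completely prime ideals of $R$. After shifting we may assume $P_1 \neq 0$; then each $D_i = R/P_i$ is a finitely generated $\kk$-domain of finite GK-dimension, and the canonical surjections $D_i \twoheadrightarrow D_{i+1}$ make $(\GK D_i)$ a non-increasing sequence in $\NN$, eventually constant at some $n$.

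The main obstacle is to rule out this stabilization. The approach is to transport the PBW filtration from $R$ down to each $D_i$, so that $\gr D_i = \gr R / \gr P_i$ is a finitely generated commutative $\kk$-algebra; since $F_0 = \kk$ and the filtration is separated, strict $P_i \subsetneq P_{i+1}$ lifts to strict $\gr P_i \subsetneq \gr P_{i+1}$, producing strict surjections of affine commutative $\kk$-algebras $\gr D_i \twoheadrightarrow \gr D_{i+1}$. The delicate point is then converting these commutative strict surjections into a strict decrease of $\GK D_i$: since $\gr D_i$ need not be a domain, one passes to minimal primes lying over $\gr P_i$, compares transcendence degrees of the resulting affine $\kk$-domains, and invokes the equality $\GK D_i = \GK \gr D_i$ from the almost-commutative filtration to obtain the strict inequality $\GK D_{i+1} < \GK D_i$, which contradicts stabilization and forces ACC.
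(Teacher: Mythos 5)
Your first step --- reducing ACC for $U(L)$ and $U(L')$ to ACC for rings of just-infinite growth via the dichotomy on whether the tail of $(P_i\cap\kk[c])$ is $(c-\lambda)$ or $(0)$, then invoking Theorem~\ref{ithm:U} or Proposition~\ref{prop:localisedKM} respectively --- is exactly the paper's reduction and is correct. The gap is in the second step, where you try to show that a strict chain of completely prime ideals in a ring of just-infinite growth forces a strict drop in GK-dimension by passing to $\gr$. The strict inclusion $\gr P_i\subsetneq\gr P_{i+1}$ does hold, but comparing transcendence degrees of minimal primes over these ideals does not yield $\GK D_{i+1}<\GK D_i$: take $\gr R=\kk[x,y]$, $\gr P_i=(xy)$, $\gr P_{i+1}=(x)$; both quotients have Krull dimension $1$. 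The point is that $\gr P_i$ need not be prime even though $P_i$ is completely prime, and strict containment of non-prime ideals in a polynomial ring does not force the Krull dimension of the quotient to drop. There is a second, independent problem: GK-dimension of a noncommutative domain is a real (not necessarily integer) invariant, so a strict decrease at each step would not by itself terminate the chain --- you need the decrements bounded away from zero.

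Both problems disappear if you bypass the associated-graded argument and instead invoke the Borho--Kraft bound (see \cite[Proposition~3.15]{KL}): if $A$ is a $\kk$-algebra domain and $J$ a nonzero two-sided ideal, then $\GK(A/J)+1\le\GK A$. Applied to $D_i=R/P_i$, a domain because $P_i$ is completely prime, with the nonzero ideal $P_{i+1}/P_i$, this gives $\GK D_{i+1}+1\le\GK D_i$; since $\GK D_1<\infty$ by just-infinite growth (after discarding an initial $P_1=0$ if necessary), the chain has length at most $\GK D_1+1$. This is the argument behind \cite[Proposition~6.4]{IS}, which the paper cites for this step. One further small point: for the localized ring $U(L)\otimes_{\kk[c]}\kk(c)$ you should note that it is a central quotient of $U_{\kk(c)}(L)$ and hence carries an almost-commutative filtration, if you do want to invoke $\gr$; but with the Borho--Kraft route no filtration on $R$ is needed at all.
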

\begin{proof}
The proof of \cite[Proposition~6.4]{IS} works in our setting, appealing to Proposition~\ref{prop:localisedKM}, Theorem~\ref{thm:current}, and Theorem~\ref{ithm:U} as necessary.  We omit the details.
%
%
\end{proof}

\begin{remark}\label{rem:ACC}
Let $W_+ = t^2 \kk[t] \frac{d}{dt}$ be the {\em positive Witt algebra}.
It is shown in  \cite[Theorem~1.2]{IS} that $U(W_+)$ has just-infinite growth.
Further, by \cite[Theorem~1.5]{PS}, the symmetric algebra $S(W_+)$ satisfies the ascending chain condition on radical Poisson ideals.
These results are supporting evidence for the conjecture \cite[Conjecture~1.3]{PS} that the enveloping algebra $U(W_+)$ satisfies the ascending chain condition on two-sided ideals.

It is now known \cite{LSS} that symmetric algebras of (twisted) loop algebras satisfy the ascending chain condition on radical Poisson ideals, and this can easily be extended to symmetric algebras of affine Kac-Moody algebras.
Combining this result with Theorem~\ref{ithm:U} and Proposition~\ref{prop:ACC}, it is natural to ask if enveloping algebras of affine Kac-Moody algebras  satisfy the ascending chain condition on two-sided ideals.
(It is easy to see that these enveloping algebras are not left or right noetherian.)
This is the subject of ongoing research.  Note that this question is only really interesting for $U_0(L')$.
\end{remark}

 A ring $R$ is {\em Hopfian} if $R$ is not isomorphic to any proper quotient $R/J$ (equivalently, any epimorphism from $R \to R$ is an isomorphism).
 If $R$ satisfies the ascending chain condition on two-sided ideals, then $R$ must be Hopfian.  
 We do not know if enveloping algebras of affine Lie algebras satisfy this ACC, but it is a consequence of our growth results that they are Hopfian.
 Further, enveloping algebras of current algebras and central quotients of enveloping algebras of affine algebras satisfy the stronger Bassian property, where a ring $R$
 is {\em Bassian} if there is no injection of $R$ into any proper quotient $R/J$.

\begin{proposition}\label{prop:HB}
The algebras $U(\mf g[t]^\sigma)$, $U_0(L')$, and  $U_0(L)$ are Bassian and Hopfian. Further, $U(L)$ and $U(L')$ are Hopfian.
\end{proposition}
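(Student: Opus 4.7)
The Bassian property for $U(\mf g[t]^\sigma)$ and for $U_\lambda(L) = U(L)/(c-\lambda)$ is immediate from their just-infinite growth (Theorems~\ref{thm:current} and~\ref{ithm:U}):  both algebras have infinite GK-dimension, while every proper quotient has finite GK-dimension, so no injection into a proper quotient is possible.  The Hopfian property for these two algebras then follows formally, since any surjection $\phi \colon R \to R$ with nonzero kernel $J$ induces an isomorphism $R/J \cong R$ whose inverse is an injection $R \hookrightarrow R/J$ into a proper quotient, contradicting Bassian.

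For $U(L)$ and $U(L')$, which are not themselves of just-infinite growth, a separate argument is required.  Let $R$ denote $U(L)$ or $U(L')$, write $U_\lambda = R/(c-\lambda)$, and suppose $\phi \colon R \to R$ is a surjective ring endomorphism with $J := \ker \phi \neq 0$.  Since $c$ is central, $\phi(c)$ lies in $Z(\phi(R)) = Z(R)$, and using $Z(R) = \kk[c]$ I may write $\phi(c) = p(c)$ for some polynomial $p \in \kk[c]$.  I will rule out both possibilities for $\deg p$ using just-infinite growth of the central quotients.

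Suppose first $\deg p \geq 1$.  For each $\lambda \in \kk$, the surjection $\phi$ induces an isomorphism
\[ R/(J + (c-\lambda)) \ \cong \ R/(p(c) - \lambda). \]
Since $\kk$ is algebraically closed and $p$ is non-constant, $p(c) - \lambda$ has a root $\mu \in \kk$, and the natural surjection $R/(p(c) - \lambda) \twoheadrightarrow R/(c-\mu) = U_\mu$ shows that the right-hand side has infinite GK-dimension.  On the other hand, the left-hand side is a quotient of $U_\lambda$, so by just-infinite growth of $U_\lambda$ (Theorem~\ref{ithm:U}), either $J + (c-\lambda) = (c-\lambda)$ or the quotient has polynomial growth.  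The latter contradicts infinite GK-dimension, so $J \subseteq (c-\lambda)$ for every $\lambda \in \kk$.  Because $R$ is free over $\kk[c]$ by PBW (with a basis not involving $c$), $\bigcap_\lambda (c-\lambda) R = 0$, forcing $J = 0$, a contradiction.

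Suppose instead $p(c) = \mu$ is constant.  Then $c - \mu \in J$ and $R/J$ is a quotient of $U_\mu$.  But $R/J \cong R$ has intermediate (non-polynomial) growth, so just-infinite growth of $U_\mu$ forces $J = (c-\mu)$ and hence $R \cong U_\mu$.  This however is impossible: the ideal $(c-\lambda) R$ for any $\lambda \neq \mu$ is nonzero and proper with quotient $U_\lambda$ of infinite GK-dimension, so $R$ is not of just-infinite growth, whereas $U_\mu$ is.  The main technical input beyond Theorem~\ref{ithm:U} is thus the identification $Z(R) = \kk[c]$:  for $U(L)$ this is standard via the grading by the derivation $d$, and for $U(L')$ the same equality holds, with any Sugawara-type central elements at the critical level lying only in a completion of $U(L')$ rather than in $U(L')$ itself.
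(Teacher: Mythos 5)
Your argument for $U(\mf g[t]^\sigma)$ and for $U_\lambda(L)=U(L)/(c-\lambda)$ (Bassian from just-infinite growth, Hopfian formally from Bassian) is the same as the paper's.  For the Hopfian property of $U(L)$ and $U(L')$, however, you take a genuinely different route, and it is worth comparing the two.  The paper's proof avoids any discussion of central elements: writing $U$ for $U(L)$ or $U(L')$ and $f\colon U\to U$ for a surjection with kernel $J$, it tensors $0\to J\to U\to U/J\to 0$ with $\kk(c)$ over $\kk[c]$, notes that $U/J\cong U$ is a $\kk[c]$-torsionfree domain, and then invokes Proposition~\ref{prop:localisedKM} (just-infinite growth of $U\otimes_{\kk[c]}\kk(c)$ as a $\kk(c)$-algebra) to force $J\otimes_{\kk[c]}\kk(c)=0$, hence $J=0$.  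Your proof instead inspects the image $\phi(c)$, writes it as $p(c)$, and runs a case analysis on $\deg p$, reducing in both cases to just-infinite growth of the central quotients $U_\mu$ from Theorem~\ref{ithm:U} plus freeness of $U$ over $\kk[c]$.  Your case analysis is correct and appealingly concrete, and it does have the advantage of bypassing Proposition~\ref{prop:localisedKM} entirely; but the cost is that it depends on the identification $Z(U(L)) = Z(U(L')) = \kk[c]$, which is not proved (or cited) in the paper and which you assert rather than establish.  For $U(L)$ the $d$-grading forces a central element into degree zero, but one still has to rule out nontrivial $\mf g$-invariants of degree zero in all the higher PBW pieces; for $U(L')$ there is no $d$ inside the algebra, so even that first reduction requires a separate argument, and your appeal to the Feigin-Frenkel/Sugawara center living only in a completion is a heuristic rather than a proof.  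So: a correct alternative approach, but one that quietly outsources the hardest point to an unreferenced fact about centers, whereas the paper's localisation argument sidesteps that fact altogether.
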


\begin{proof}
This proof is similar to the proof of \cite[Proposition~6.5]{IS}, but as it is fairly brief we give it here in full.

If $R$ has just infinite growth, then $\GK R/J < \GK R$ for any proper ideal $J$ of $R$, so $R$ cannot inject in $R/J$.  Thus the Bassian (and thus Hopfian) property for $U(\mf g[t]^\sigma)$ follows from Theorem~\ref{thm:current}, and for $U_0(L') $ and $U_0(L)$  it follows from Theorem~\ref{ithm:U}(0).

Let $U$ be either $U(L)$ or $U(L')$.
To show that $U$ is Hopfian,  let $f$ be a surjective endomorphism of $U$, with kernel $J$.   As $U/J \cong \operatorname{Im}(f) = U$ is torsionfree as a module over $\kk[c]$,
the complex
\[0 \to J \otimes_{\kk[c]} \kk(c)  \to U \otimes_{\kk[c]} \kk(c) \to U\otimes_{\kk[c]} \kk(c) \to 0\]
is exact.  Now by Proposition~\ref{prop:localisedKM}, we must have $J \otimes_{\kk[c]} \kk(c) = 0$, as otherwise a $\kk(c)$-algebra of finite GK-dimension would surject onto one of infinite GK-dimension.
As $U$ is $\kk[c]$-torsionfree (or by Theorem~\ref{ithm:U}(2)), $J = 0$.
\end{proof}

\bibliographystyle{amsalpha}

\newcommand{\etalchar}[1]{$^{#1}$}
\providecommand{\bysame}{\leavevmode\hbox to3em{\hrulefill}\thinspace}
\providecommand{\MR}{\relax\ifhmode\unskip\space\fi MR }
\providecommand{\MRhref}[2]{%
  \href{http://www.ams.org/mathscinet-getitem?mr=#1}{#2}
}
\providecommand{\href}[2]{#2}

\end{document}